\documentclass[12pt]{article}

\newif\ifarXiv
\arXivtrue

\ifarXiv
\usepackage{e-jc_new}
\else
\usepackage{e-jc}
\fi


\usepackage{amsthm,amsmath,amssymb}

\usepackage{mathtools}
\usepackage{rotating}

\usepackage{graphicx}

\usepackage[colorlinks=true,citecolor=black,linkcolor=black,urlcolor=blue]{hyperref}


\sloppy

\theoremstyle{plain}
\newtheorem{theorem}{Theorem}[section]
\newtheorem{thm}[theorem]{Theorem}
\newtheorem{Theorem}[theorem]{Theorem}

\newtheorem{lem}[theorem]{Lemma}
\newtheorem{Lemma}[theorem]{Lemma}

\newtheorem{proposition}[theorem]{Proposition}
\newtheorem{Proposition}[theorem]{Proposition}

\theoremstyle{definition}
\newtheorem{definition}[theorem]{Definition}
\newtheorem{Definition}[theorem]{Definition}
\newtheorem{example}[theorem]{Example}

\newtheorem{openproblem}{Open Problem}

\theoremstyle{remark}

\newtheorem{Remark}[theorem]{Remark}

\newcommand{\Qmat}{\mathbf{Q}}
\newcommand{\Amat}{\mathbf{A}}
\newcommand{\Bmat}{\mathbf{B}}
\newcommand{\Pmat}{\mathbf{P}}
\newcommand{\Dmat}{\mathbf{M}}
\newcommand{\Umat}{\mathbf{U}}
\newcommand{\Vmat}{\mathbf{V}}

\newcommand{\uvec}{\mathbf{u}}

\newcommand{\dvec}{\mathbf{d}}
\newcommand{\Mat}{\mathrm{Mat}}
\newcommand{\p}[1]{p_{#1}}

\newcommand{\nm}[1]{\mbox{\normalsize $#1$}}

\newcommand*\samethanks[1][\value{footnote}]{\footnotemark[#1]}

\newcommand{\triangleOp}{\odot}
\renewcommand{\emptyset}{\varnothing}

\DeclarePairedDelimiter\abs{\lvert}{\rvert}

\newcommand{\ditem}{\item[---]}


\title{\bf Recurrence relations and splitting formulas \\ for the domination polynomial}


\author{Tomer Kotek\thanks{Supported by the Fein foundation and the graduate school of the Technion. }\\
\small Department of Computer Science\\[-0.8ex]
\small Technion---Israel Institute of Technology\\[-0.8ex] 
\small Haifa, Israel\\
\small\tt tkotek@cs.technion.ac.il\\
\and
James Preen\\
\small Mathematics\\[-0.8ex]
\small Cape Breton University\\[-0.8ex]
\small Sydney, NS B1P6L2, Canada\\
\small\tt james\_preen@capebretonu.ca\\
\and
Frank Simon\thanks{Supported by a grant from the European Social Fund. } \qquad Peter Tittmann \qquad Martin Trinks\samethanks \\
\small Faculty Mathematics / Sciences / Computer Sciences \\[-0.8ex]
\small Hochschule Mittweida - University of Applied Sciences \\[-0.8ex]
\small Mittweida, Germany\\
\small\tt \{simon,peter,trinks\}@hs-mittweida.de\\
}


\begin{document}

\maketitle


\begin{abstract}
The domination polynomial $D(G,x)$ of a graph $G$ is the generating function
of its dominating sets. 
We prove that $D(G,x)$ satisfies a wide range of reduction formulas. 
We show linear recurrence relations for $D(G,x)$ for arbitrary graphs and for various special cases. 
We give splitting formulas for $D(G,x)$ based on articulation vertices, and 
more generally, on splitting sets of vertices.

  \bigskip\noindent \textbf{Keywords:} 
domination polynomial; recurrence relation; splitting formula
\end{abstract}

\section{Introduction}

Recurrence relations of graph polynomials have received considerable attention in the literature. 
Informally, 
a graph polynomial $f(G, x)$ {\em satisfies a linear recurrence relation} if
\begin{align*}
f(G,x) &= \sum_{i=1}^k g_i(x) f(G_i, x),
\end{align*}
where $G_i$ are obtained from $G$ using various vertex and edge elimination operations and the $g_i(x)$'s are given rational functions.
For example, it is well-known that the independence polynomial satisfies a linear recurrence relation with respect to two vertex elimination operations, 
the deletion of a vertex and the deletion of vertex's closed neighborhood.
Other prominent graph polynomials in the literature
satisfy similar recurrence relations with respect to vertex and edge elimination operations,
among them the matching polynomial, the chromatic polynomial and the vertex-cover polynomial, 
see e.g. \cite{ar:AGM10}.

In contrast, it is significantly harder to find recurrence relations for the domination polynomial. 
We show in Theorem \ref{th:non-exist_v} that $D(G,x)$ does not satisfy any linear recurrence relation which applies 
only the commonly used vertex operations of 
deletion, extraction, contraction and neighborhood-contraction.
Nor does $D(G,x)$ satisfy any linear recurrence relation using only edge deletion, contraction and extraction. 

In spite of this non-existence result, we give in this paper an abundance of recurrence relations and splitting formulas for the domination polynomial.

The domination polynomial was studied recently by several authors, see 
\cite{Akbari2010,ar:AkbariAlikhaniOboudiPeng2010,ar:Alikhani2011,ar:Alikhani2011ARX,ar:AlikhaniPengIntro,ar:AlikhaniPeng2011,Alikhani2009,pathof2,Arocha2000,Dohmen2012}.
The previous research focused mainly on the roots of domination polynomials 
and on the domination polynomials of various classes of special graphs. 
In \cite{Dohmen2012} it is shown that computing 
the domination polynomial $D(G,x)$ of a graph $G$ is NP-hard and
some examples for graphs for which  $D(G,x)$ can be computed efficiently are given. 
Some of our results, e.g. Theorem~\ref{thm:symmDomSplittingFormula}, lead to efficient schemes to compute the 
domination polynomial. 

An outline of the paper is as follows.
In Section \ref{se:arb} we give a recurrence relation for arbitrary graphs. 
In Section \ref{se:special} we give simple recurrence steps in special cases, which allow us e.g. to
dispose of triangles, induced $5$-paths and irrelevant edges. 
In Section \ref{se:art} we consider graphs of connectivity $1$,
and give several splitting formulas for them. In Section \ref{se:sep} we generalize the results of the previous section 
to arbitrary separating vertex sets. 
In Section \ref{se:der} we show a recurrence relation for arbitrary graphs which 
uses derivatives of domination polynomials. 
\subsection{Definitions and notations}

This paper discusses simple undirected graphs $G=(V,E)$. 
A vertex subset $W \subseteq V$ of $G$ is a \emph{dominating
  vertex set} in $G$, if for each vertex $v \in V$ of $G$ either
$v$ itself or an adjacent vertex is in $W$. 
\begin{definition}
Let $G = (V, E)$ be a graph.
The
\emph{domination polynomial} $D(G, x)$ is 
given by 
\[
 D(G,x)=\sum_{i=0}^{|V|} d_i(G) x^i\,,
\]
where $d_i(G)$ is the number of dominating sets of size $i$ in $G$. 
\end{definition}

A graph polynomial $f(G,x)$ is 
{\em multiplicative with respect to components}, if
\begin{align*}
f(G^1 \cup G^2, x) &= f(G^1, x) \cdot f(G^2, x),
\end{align*}
where $G^1\cup G^2$ denotes the disjoint union of the graphs $G^1$ and
$G^2$.
Obviously, the domination polynomial $D(G, x)$ is multiplicative with respect to 
components.

We use the following  notations: The set of vertices adjacent in $G$ to a vertex of a vertex subset $W
\subseteq V$ is the \emph{open neighborhood} $N_G(W)$ of $W$. The \emph{closed neighborhood}
$N_G[W]$ additionally includes all vertices of $W$ itself. In case of a
singleton set $W = \{v\}$ we write $N_G(v)$ and $N_G[v]$ instead
of $N_G(\{v\})$ and $N_G[\{v\}]$, respectively.
We omit the subscript when the graph $G$ is clear from the context. 

Hence, a vertex subset $W \subseteq V$ of $G$ is a dominating set
of $G$, if all vertices are in the closed neighborhood of $W$,
i.e. $N_G[W] = V$, and the domination polynomial can be stated as
a subset expansion:
\begin{align}
D(G, x) = \sum_{W \subseteq V}{[N_G[W] = V] x^{\abs{W}}}\,,
\end{align}
where we denote, for any statement
 $C_W$ on $W$, 
\[
 [C_W] = \begin{cases}
                 1 & \mbox{if } C_W\mbox{ holds for }W,\\	
		 0 & \mbox{if } C_W\mbox{ does not hold for }W\,.
                \end{cases}
\]
We generalize this representation of $D(G,x)$ as follows\footnote{
This definition can be generalized using the logical framework of \cite{ar:MakowskyZoo}. 
}:
\begin{definition} \label{def:gen-dom}
Let $G = (V, E)$ be a graph.
 Let $C=C_W$ be a statement, then we denote by
\(D_C(G, x)\) the
generating function for the number of dominating vertex sets of \(G\)
satisfying $C$:
\[
D_C(G, x) = \sum_{W \subseteq V}{[N_G[W]=V] [C_W] x^{\abs{W}}}.
\]
\end{definition}
Examples for statements $C_W$ are: (a) $W\not=\emptyset$ (b)  $u\in W$, (c) $u\notin W$, and (d) $u\notin N_G[W]$, 
where $u$  in (b), (c) and (d) is some fixed vertex of $G$.


In the sequel we use for graphs $G = (V, E)$ the following vertex and edge operations,
which are commonly found in the literature.
Let $v\in V$ be a vertex and 
$e = \{u, v\} \in E$ be an edge of $G$. 

\begin{itemize}
 \ditem Vertex deletion: $G-v$ denotes the graph obtained from $G$ by removal of $v$
and all edges incident to $v$. 
 \ditem Vertex contraction:  $G/v$ denotes the graph obtained from $G$ by the removal of $v$ and 
the addition of edges between any pair of non-adjacent neighbors of $v$. 
 \ditem Vertex extraction: $G-N[v]$ denotes the graph $G-N_G[v]$ obtained by deleting all of
the vertices in the closed neighborhood of $v$ and the edges incident to them.
\ditem Vertex appending: $G+\{v,\cdot\}$ denotes the graph 
$(V\cup\{v'\}, E\cup \{v,v'\})$ obtained from $G$ by adding a new vertex $v'$ and 
 an edge $\{v,v'\}$ to $G$. 
 \ditem Edge deletion: $G-e$ denotes the graph obtained from $G$ by simply removing $e$. 
 \ditem Edge contraction: $G/e$ denotes the graph obtained from $G$ by removing $e$ and 
unifying the end-points of $e$. 
\ditem Edge extraction: $G\dagger e$ denotes the graph $G-u-v$.
 Note that this operation removes all the edges incident to $e$ and $e$ itself. 
\end{itemize}

For the Sections~\ref{sec:articulations} and
\ref{sec:arbitr-separ-vert} we introduce the notion of a splitting of
a graph.
\begin{Definition}
  Let $G=(V,E)$ be a graph and $G^1=(V^1,E^1)$, $G^2=(V^2,E^2)$
  subgraphs of $G$ with $V=V^1\cup V^2$, $X=V^1\cap V^2$, $E=E^1\cup
  E^2$ and $E^1\cap E^2=\emptyset$, then we say that $(G^1,G^2,X)$ is
  a \emph{splitting of $G$} at the \emph{separating vertex set} $X$. In case of $X=\{v\}$
  the vertex $v$ is called an \emph{articulation} of $G$.
\end{Definition}
Observe that our definition of a separating vertex set and an
articulation is more general than the usual one, as the number of
components of the graph $G$ does not necessarily increase, when the
vertex set $X$ is removed from $G$.

Other notation will be introduced as needed.

\section{A recurrence relation for arbitrary graphs}\label{se:arb}

For a graph $G$ and a vertex $u$ of $G$, let $\p{u}(G,x)$ 
be the generating function counting those dominating sets 
for $G-N[u]$ which additionally dominate the vertices of $N(u)$ in $G$.
Equivalently, $\p{u}(G,x)$ is the polynomial counting the dominating sets of $G-u$ 
which do not contain any vertex of $N(u)$. 
Note $\p{u}(G,x) = D_{u\notin N_G[W]}(G-u,x)$ from Definition \ref{def:gen-dom}.
Recall that $D_{u\in W}(G,x)$ denotes 
the generating function of the dominating sets $W$ of $G$ which contain $u$.
$D_{u\notin W}(G,x)$ is defined analogously. 

The following theorem will be useful both to prove recurrence relations for arbitrary graphs
here and in Section \ref{se:der}, 
and to give recurrence steps for special cases in Section \ref{se:special}. This result 
and its two corollaries also appear in \cite{ar:Alikhanibook} but were proved independently.
\begin{thm}  \label{t:red}
Let $G = (V, E)$ be a graph.
For any vertex $u$ in $G$ we have
 \begin{align}
\label{eq:D} D(G,x) &= xD(G/u,x) + D(G-u,x) + xD(G-N[u],x) - (1+x) \p{u}(G,x)\,.
\end{align}
Furthermore, we have
\begin{align}
\label{eq:u_notin_W} D_{u \notin W}(G, x) &= D(G-u, x) - p_u(G, x)\\
\label{eq:u_in_W} D_{u \in W}(G, x) &= x D(G-N[u], x) + x (D(G/u, x) - p_u(G, x))\,.
\end{align}
\end{thm}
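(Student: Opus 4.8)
The plan is to establish the two component identities \eqref{eq:u_notin_W} and \eqref{eq:u_in_W} first, and then obtain \eqref{eq:D} for free. Since every subset $W\subseteq V$ either contains $u$ or does not, partitioning the dominating sets gives $D(G,x)=D_{u\notin W}(G,x)+D_{u\in W}(G,x)$; adding \eqref{eq:u_notin_W} and \eqref{eq:u_in_W} and collecting the $p_u$ terms yields exactly \eqref{eq:D}. So the real content lies in the two lower identities, which I would prove by sorting subsets according to how they meet the three layers $\{u\}$, $N(u)$, and $R:=V\setminus N[u]$.

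First I would prove \eqref{eq:u_notin_W}. A set $W\subseteq V\setminus\{u\}$ dominates $G$ precisely when it dominates every vertex of $V\setminus\{u\}$ and in addition dominates $u$ itself; since $u\notin W$, the latter simply means $W\cap N(u)\neq\emptyset$. Since $W$ avoids $u$, no vertex can be dominated by $u$, so domination of $V\setminus\{u\}$ by $W$ in $G$ is exactly domination in $G-u$; that is, $W$ must be a dominating set of $G-u$. Hence $D_{u\notin W}(G,x)$ counts the dominating sets of $G-u$ that meet $N(u)$. The complementary count, dominating sets of $G-u$ avoiding $N(u)$, is by definition $p_u(G,x)$, so subtracting gives $D_{u\notin W}(G,x)=D(G-u,x)-p_u(G,x)$.

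The main work is \eqref{eq:u_in_W}. Writing a dominating set containing $u$ as $\{u\}\cup W'$ with $W'\subseteq N(u)\cup R$ contributes a factor $x$, and since $u$ already dominates $N[u]$, the set $\{u\}\cup W'$ dominates $G$ iff $W'$ dominates every vertex of $R$ in $G$. Thus $D_{u\in W}(G,x)=x\,F(x)$, where $F(x)$ is the generating function of all $W'\subseteq N(u)\cup R$ dominating $R$. The key observation is that in $G/u$ the neighborhood $N(u)$ becomes a clique while all edges incident to $R$ are preserved; consequently the domination of $R$ is identical in $G$ and $G/u$, and any $W'$ meeting $N(u)$ automatically dominates all of $N(u)$ in $G/u$. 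I would therefore split $F(x)$ by whether $W'$ meets $N(u)$: the sets with $W'\subseteq R$ dominating $R$ are exactly the dominating sets of $G-N[u]$, contributing $D(G-N[u],x)$; the sets with $W'\cap N(u)\neq\emptyset$ dominating $R$ are exactly the dominating sets of $G/u$ that meet $N(u)$. On the $G/u$ side, the dominating sets avoiding $N(u)$ are those $W'\subseteq R$ dominating both $R$ and $N(u)$, which is precisely $p_u(G,x)$; hence the dominating sets of $G/u$ meeting $N(u)$ contribute $D(G/u,x)-p_u(G,x)$. Adding the two pieces gives $F(x)=D(G-N[u],x)+D(G/u,x)-p_u(G,x)$, and multiplying by $x$ yields \eqref{eq:u_in_W}.

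The hard part will be the bookkeeping around the contracted graph $G/u$: one must verify carefully that the newly added clique edges inside $N(u)$ neither change which $W'$ dominate $R$ nor create spurious domination, and that $p_u(G,x)$ is exactly the overcount of dominating sets of $G/u$ lying entirely in $R$. Once the roles of the three layers $\{u\}$, $N(u)$, $R$ are pinned down, both identities reduce to matching subset families, and \eqref{eq:D} then follows by addition.
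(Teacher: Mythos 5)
Your proposal is correct and follows essentially the same route as the paper: it establishes \eqref{eq:u_notin_W} by identifying $D_{u\notin W}(G,x)$ with the dominating sets of $G-u$ meeting $N(u)$, establishes \eqref{eq:u_in_W} by splitting the sets containing $u$ according to whether they meet $N(u)$ (the two pieces being counted by $xD(G-N[u],x)$ and $x(D(G/u,x)-p_u(G,x))$ respectively), and obtains \eqref{eq:D} by addition. Your write-up is merely more explicit than the paper's about why contraction at $u$ preserves domination of $V\setminus N[u]$ and why $p_u(G,x)$ is exactly the overcount in $D(G/u,x)$.
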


\begin{proof}
  Any dominating set $W$ of $G-u$ is a dominating set of $G$, unless no
neighbor of $u$ in $G$ is in $W$. The dominating sets
of $G-u$ including neither $u$ nor any of its neighbors are
enumerated by $p_u(G,x)$. Therefore we have Equation (\ref{eq:u_notin_W}). 

The dominating sets of $G$ which contain $u$ but not any of its neighbors are
counted by $x D(G-N[u], x)$. 
For every dominating set $W$ counted by
$D(G/u, x)$, $W \cup \{u\}$ is a dominating set of $G$.
The polynomial $p_u(G, u)$ counts the dominating sets of $G/u$ which do not contain any neighbor of
$u$ in $G$.
The dominating sets of $G$ which contain $u$ and at least one of its neighbors are
counted by $x(D(G/u, x) - p_u(G, u))$, and therefore we have Equation (\ref{eq:u_in_W}).

Finally, Equation (\ref{eq:D}) follows from
\[D(G,x) = D_{u\notin W}(G,x) + D_{u\in W}(G,x)\,.\qedhere
\]
\end{proof}

For every edge $\{u,v\}\in E(G)$, let $p_{u,v}(G,x)=D_{u\notin N_G[W-\{v\}], v\in W}(G-u,x)$
%
be the  generating function counting the dominating sets $W$
in $G-u$ which contain exactly one neighbor of $u$ and
that neighbor is $v$.
\begin{lem}
\label{prop:p_w} 
Let $G = (V, E)$ be a graph and
let $e=\{u,v\}\in E$. Then
\[
\p{u}(G-e,x)=p_{u,v}(G,x)+\p{u}(G,x)\,.
\]
\end{lem}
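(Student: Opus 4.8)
The plan is to unwind all three generating functions into sums over the same family of vertex subsets---the dominating sets of $G-u$---and then to split that family according to whether $v$ is used. First I would record the two structural facts that make the identity transparent. Since deleting the vertex $u$ removes every edge incident to $u$, including $e$, we have $(G-e)-u = G-u$ as graphs; hence all three quantities are generating functions summed over dominating sets $W$ of the single graph $G-u$, and the differences between them lie only in the side condition imposed on $W$. The second fact is that passing from $G$ to $G-e$ deletes exactly $v$ from the open neighborhood of $u$, i.e. $N_{G-e}(u)=N_G(u)\setminus\{v\}$.

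With these in hand, I would read off the side conditions. By definition $\p{u}(G-e,x)$ enumerates the dominating sets $W$ of $G-u$ with $W\cap N_{G-e}(u)=\emptyset$, which by the second fact is the condition $W\cap\bigl(N_G(u)\setminus\{v\}\bigr)=\emptyset$; in words, $W$ contains no neighbor of $u$ in $G$ other than possibly $v$. The two right-hand summands are precisely the two sub-cases of this condition: $\p{u}(G,x)$ counts those $W$ with $W\cap N_G(u)=\emptyset$, and $p_{u,v}(G,x)$ counts those $W$ with $W\cap N_G(u)=\{v\}$, i.e. $v\in W$ and $v$ is the unique neighbor of $u$ lying in $W$.

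The heart of the argument is then the case split on membership of $v$. If $v\notin W$, the condition $W\cap\bigl(N_G(u)\setminus\{v\}\bigr)=\emptyset$ is equivalent to $W\cap N_G(u)=\emptyset$, so these sets are exactly those enumerated by $\p{u}(G,x)$. If $v\in W$, the same condition forces $W\cap N_G(u)=\{v\}$, so these sets are exactly those enumerated by $p_{u,v}(G,x)$. Since every $W$ counted by $\p{u}(G-e,x)$ falls into exactly one of these two disjoint classes and contributes the same monomial $x^{\abs{W}}$ in each expression, adding the two contributions yields the claimed identity.

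I do not expect a genuine obstacle here: once the equality $(G-e)-u=G-u$ and the neighborhood change $N_{G-e}(u)=N_G(u)\setminus\{v\}$ are noted, everything reduces to a disjoint partition of one summation, with the weights $x^{\abs{W}}$ matching termwise. The only point requiring a little care is to confirm that the defining predicate of $p_{u,v}$---that $v$ be the only neighbor of $u$ in $W$---coincides exactly with the $v\in W$ half of the $G-e$ condition, which is immediate from the two structural facts above.
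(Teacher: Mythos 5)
Your proof is correct and follows essentially the same route as the paper: both arguments observe that $\p{u}(G-e,x)$ counts the dominating sets of $G-u$ containing no member of $N_G(u)$ except possibly $v$, and then partition these according to whether $v$ lies in the set, recovering $\p{u}(G,x)$ and $p_{u,v}(G,x)$ as the two disjoint classes. Your write-up merely makes explicit the two structural facts ($(G-e)-u=G-u$ and $N_{G-e}(u)=N_G(u)\setminus\{v\}$) that the paper leaves implicit.
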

\begin{proof}
We can think of $\p{u}(G-e,x)$ as follows: $\p{u}(G-e,x)$
counts the dominating sets in $G-u$ which do not contain any
member of $N_{G}(u)$, except possibly $v$. $\p{u}(G,x)$
counts the dominating sets of $G-u$ which do not contain any
member of $N_{G}(u)$. Hence, $\p{u}(G-e,x)-\p{u}(G,x)$
counts the dominating sets $T$ in $G-w$ which contain $v$
but do not contain any other member $N_{G}(u)$.
\end{proof}
Using the previous lemma and theorem we can prove that the domination polynomial satisfies a 
recurrence relation for arbitrary graphs which is based on the edge and vertex elimination operations. 
The recurrence uses composite operations, e.g. $G-e/u$, which stands for $\left(G-e\right)/u$. 
\begin{thm}\label{th:arbitrary_rec}
Let $G = (V, E)$ be a graph.
For every edge $e=\{u,v\}\in E$, 
\begin{eqnarray*}
D(G,x) & = & D(G-e,x)+\frac{x}{x-1}\Bigg[D(G-e/u,x)+D(G-e/v,x)\\
 &  & -D(G/u,x)-D(G/v,x)-D(G-N[u],x)-D(G-N[v],x)\\
&  & +D(G-e-N[u],x)+D(G-e-N[v],x)\Bigg]\,. \\
\end{eqnarray*}
\end{thm}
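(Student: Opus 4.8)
The plan is to introduce the shorthand $\Delta = D(G,x)-D(G-e,x)$ and to establish two separate identities for it. The first is purely combinatorial, $\Delta = p_{u,v}(G,x)+p_{v,u}(G,x)$, and the second is algebraic: it expresses each of $(1+x)p_{u,v}(G,x)$ and $(1+x)p_{v,u}(G,x)$ in terms of domination polynomials of reduced graphs together with $\Delta$ itself. Substituting the first identity into the sum of the two instances of the second leaves a single linear equation in $\Delta$, whose solution is exactly the claimed recurrence; the factor $\frac{x}{x-1}$ appears precisely because $\Delta$ survives on both sides with a net coefficient $x-1$.

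For the algebraic identity I would apply Equation~\eqref{eq:D} of Theorem~\ref{t:red} with respect to $u$ to both $G$ and $G-e$. Rearranging \eqref{eq:D} gives $(1+x)p_u(H,x) = xD(H/u,x) + D(H-u,x) + xD(H-N[u],x) - D(H,x)$ for any graph $H$. Taking $H=G$ and $H=G-e$ and subtracting, using that $(G-e)-u = G-u$ (deleting $u$ already removes $e$) and the left-to-right reading of the composite operations (so the closed neighborhood in $G-e-N[u]$ is taken in $G-e$), the two $D(G-u,x)$ terms cancel and I obtain
\begin{align*}
(1+x)\big(p_u(G-e,x)-p_u(G,x)\big) &= x\big(D(G-e/u,x)-D(G/u,x)\big) \\ &\quad + x\big(D(G-e-N[u],x)-D(G-N[u],x)\big) + \Delta.
\end{align*}
Lemma~\ref{prop:p_w} identifies the left-hand side as $(1+x)p_{u,v}(G,x)$. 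Running the same computation with the roles of $u$ and $v$ exchanged yields the analogous identity for $(1+x)p_{v,u}(G,x)$.

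For the combinatorial identity I would compare dominating sets of $G$ and of $G-e$ directly. Since $G-e$ is a spanning subgraph of $G$, we have $N_{G-e}[W]\subseteq N_G[W]$ for every $W$, so every dominating set of $G-e$ dominates $G$; hence $\Delta$ counts exactly the sets that dominate $G$ but not $G-e$. Removing $e$ changes the neighbor set only of $u$ and of $v$, so only their domination can be affected, and such a $W$ must leave $u$ or $v$ undominated in $G-e$. This splits into two mutually exclusive cases: either $v\in W$, $u\notin W$ with $v$ the unique neighbor of $u$ in $W$, or the symmetric condition with $u$ and $v$ swapped. A short check shows that under the first set of conditions $W$ dominates $G$ if and only if it dominates $G-u$, so these sets are counted precisely by $p_{u,v}(G,x)$; the second case gives $p_{v,u}(G,x)$. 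Hence $\Delta = p_{u,v}(G,x)+p_{v,u}(G,x)$.

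Finally I would combine: adding the two algebraic identities gives $(1+x)\big(p_{u,v}(G,x)+p_{v,u}(G,x)\big) = 2\Delta + xS$, where $S$ denotes the bracketed eight-term expression in the statement (it equals $\big(D(G-e/u,x)-D(G/u,x)\big)+\big(D(G-e/v,x)-D(G/v,x)\big)+\big(D(G-e-N[u],x)-D(G-N[u],x)\big)+\big(D(G-e-N[v],x)-D(G-N[v],x)\big)$). Substituting the combinatorial identity $p_{u,v}(G,x)+p_{v,u}(G,x)=\Delta$ turns this into $(1+x)\Delta = 2\Delta + xS$, i.e. $(x-1)\Delta = xS$, which rearranges to the claimed formula. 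I expect the main obstacle to be the combinatorial identity rather than the algebra: one must argue carefully that deleting $e$ endangers only the domination of $u$ and $v$, handle the ``exactly one neighbor'' bookkeeping, and verify the equivalence between domination in $G$ and in $G-u$ that lets these sets be recognized as the ones enumerated by $p_{u,v}$ and $p_{v,u}$.
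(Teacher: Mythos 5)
Your proposal is correct and follows essentially the same route as the paper: the identity $D(G,x)-D(G-e,x)=p_{u,v}(G,x)+p_{v,u}(G,x)$, Lemma~\ref{prop:p_w} to rewrite each $p_{u,v}$ as $p_u(G-e,x)-p_u(G,x)$, and Theorem~\ref{t:red} applied to $G$ and $G-e$ at both $u$ and $v$, with the cancellation $D(G-e-u,x)=D(G-u,x)$ and the surviving $2\Delta$ producing the $\frac{x}{x-1}$ factor. Your write-up is in fact somewhat more explicit than the paper's about where that factor comes from.
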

\begin{proof}
Let $e=\{v,u\}\in E$. Then $D(G,x)-D(G-e,x)$ counts the dominating
sets $T$ of $G$ which are not dominating sets on $G-e$. I.e.,
$T$ is counted by $D(G,x)-D(G-e,x)$ iff one can choose $a,b\in\{u,v\}$,
$a\not=b$, such that $a\in T$, $b\notin T$ and no neighbor of $b$
in $G$, besides $a$, belongs to $T$. That means $D(G,x)-D(G-e,x)=p_{u,v}(G,x)+p_{v,u}(G,x)$. 

By Lemma \ref{prop:p_w}, $p_{u,v}(G,x)=\p{u}(G-e,x)-\p{u}(G,x)$
and $p_{v,u}(G,x)=\p{v}(G-e,x)-\p{v}(G,x)$. Thus, 
\[
D(G,x)=D(G-e,x)+\p{u}(G-e,x)+\p{v}(G-e,x)-\p{u}(G,x)-\p{v}(G,x)\,.
\]

Using Theorem \ref{t:red},
\[
D(G,x)  = D(G-e,x)+ \frac{1}{1+x}  f(G,x,e)
\]
where 
\begin{align*}
 f(G,x,e) & =    xD(G-e/u,x)+D(G-e-u,x)+xD(G-e-N[u],x)-D(G-e,x)\\
 &   +xD(G-e/v,x)+D(G-e-v,x)+xD(G-e-N[v],x)-D(G-e,x)\\
 &   -xD(G/u,x)-D(G-u,x)-xD(G-N[u],x)+D(G,x)\\
 &    -xD(G/v,x)-D(G-v,x)-xD(G-N[v],x)+D(G,x)\,.
\end{align*}
The theorem follows using that $D(G-e-u,x)=D(G-u,x)$ and $D(G-e-v,x)=D(G-v,x)$
by rearranging the terms.
\end{proof}

On the other hand, $D(G,x)$ does not satisfy any linear recurrence relation with 
the four vertex operations of deletion, contraction, extraction and neighborhood-contraction, the latter defined as follows:
we denote by $G/N(v)$ the graph obtained from 
$G$ by removing all of the vertices of $N_G(v)$ and adding edges 
so that $v$ is made adjacent to every neighbor of $N_G(v)$ in $G$.
Note $v$ is not removed from the graph.
\begin{thm} \label{th:non-exist_v}
There do not exist rational functions $a,b,c,d\in \mathbb{R}(x)$ such  that 
for every graph $G$ and vertex $v$ of $G$ it holds that
\begin{eqnarray}
D(G,x)&=&a D(G-v,x)+b D(G/v,x)+c D(G-N[v],x)+d D(G/N(v),x)\,. \label{eq:rec}
\end{eqnarray}
\end{thm}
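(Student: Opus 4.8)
The plan is to argue by contradiction. Assume rational functions $a,b,c,d\in\mathbb{R}(x)$ exist for which \eqref{eq:rec} holds for every pair $(G,v)$. Each such pair then contributes one linear equation in the four unknowns $a,b,c,d$ over the field $\mathbb{R}(x)$: the coefficients are the four polynomials $D(G-v,x)$, $D(G/v,x)$, $D(G-N[v],x)$, $D(G/N(v),x)$, and the right-hand side is $D(G,x)$. Since a linear system over a field becomes unsolvable as soon as the equations are mutually incompatible, it suffices to exhibit a finite list of small graphs for which the resulting system is inconsistent. Thus the entire argument reduces to a bookkeeping computation followed by a check that one concrete system has no solution.

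First I would fix a few small graphs with a distinguished vertex and compute all five domination polynomials for each. A convenient choice is $K_1$ (its only vertex), the edge $K_2$ (an endpoint), the path $P_3$ (its centre) and the triangle $K_3$ (any vertex). For these the polynomials are elementary; the one operation that needs care is the neighbourhood contraction $G/N(v)$, which in each example collapses the second neighbourhood into $v$ and leaves a single vertex, so that $D(G/N(v),x)=x$ and $D(G-N[v],x)=1$ throughout. Reading off \eqref{eq:rec} for each graph gives a system such as
\begin{align*}
x &= a+b+c+dx, \\
x^2+2x &= (a+b+d)x + c, \\
x^3+3x^2+x &= a x^2 + b(x^2+2x) + c + dx, \\
x^3+3x^2+3x &= (a+b)(x^2+2x) + c + dx.
\end{align*}

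Then I would extract the contradiction by elimination. Subtracting the $P_3$-equation from the $K_3$-equation cancels every term except the $a$-term and forces $2ax=2x$, hence $a=1$. Subtracting the $K_1$-equation from the $K_2$-equation gives $(a+b)(x-1)=x^2+x$, which with $a=1$ determines $b=(x^2+1)/(x-1)$. Isolating the quantity $c+dx$ from the first equation and feeding these values into the $P_3$-equation then produces a forced identity whose two sides differ by the nonzero function $x(x+1)^2/(x-1)$, so no rational functions $a,b,c,d$ can satisfy all four equations at once.

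The only genuinely delicate point is the selection of graphs: one must choose them so that the system is provably inconsistent over $\mathbb{R}(x)$ rather than merely over-determined, and so that the contradiction survives the fact that $c$ and $d$ here appear only through the combination $c+dx$. The latter is harmless, since any rational function is realisable as $c+dx$, so inconsistency of the reduced system in $a,b,c+dx$ already rules out every $(a,b,c,d)$. Beyond this, the main source of error would be the neighbourhood-contraction $G/N(v)$, whose definition is the least familiar of the four operations; I would verify each $G/N(v)$ directly against the definition before assembling the system.
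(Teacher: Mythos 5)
Your proposal is correct and follows essentially the same strategy as the paper: both exhibit a handful of small graph--vertex pairs (the paper uses $K_1$, $K_2$, $K_3$ and $P_3$ with two choices of vertex; you use $K_1$, $K_2$, $K_3$ and the centre of $P_3$) and show the resulting linear system over $\mathbb{R}(x)$ is inconsistent. Your computations check out, including the reduction to the unknowns $a$, $b$, $c+dx$ and the final discrepancy $\pm x(x+1)^2/(x-1)$, so nothing further is needed.
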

\begin{proof}
Assume there exist $a,b,c,d$ such that Equation (\ref{eq:rec}) holds.
We will show that there exist five pairs of graphs and vertices $(G_{i},v_{i})$
such that $v_i\in V(G_i)$ for which the system of equations with indeterminates
$a,b,c,d$, 
\begin{eqnarray*}
D(G_{1},x) & = & a D(G_{1}-v_{1},x)+b D(G_{1}/v_{1},x)+cD(G_{1}-N[v_{1}],x)+dD(G_{1}/N(v_{1}),x)\\
 & \vdots\\
D(G_{5},x) & = & a D(G_{5}-v_{5},x)+b D(G_{5}/v_{5},x)+cD(G_{5}-N[v_{5}],x)+dD(G_{5}/N(v_{5}),x)
\end{eqnarray*}
has no solution. The graphs are: $G_{1}=K_1$, $G_{2}=K_2$, $G_{3}=K_3$,
and $G_{4}=G_5=P_{3}$. For $G_{1}$, $G_{2}$ and $G_{3}$ all of the
vertices are symmetric. We choose $v_{4}$ as one of the two end-points
of $P_{3}$, and $v_{5}$ as the central vertex in $P_{3}$. We have:
\begin{center}
\begin{tabular}{|c||c|c|c|c|}
\hline 
$G_{i}\ \mbox{(}v_i\mbox{)}$ & $G_{i}-v_{i}$ & $G_{i}/v_{i}$ & $G_{i}-N[v_{i}]$ & $G_{i}/N(v_{i})$\tabularnewline
\hline 
\hline 
$K_1$ & $\emptyset$ & $\emptyset$ & $\emptyset$ & $K_1$\tabularnewline
\hline 
$K_2$ & $K_1$ & $K_1$ & $\emptyset$ & $K_1$\tabularnewline
\hline 
$K_{3}$ & $K_2$ & $K_2$ & $\emptyset$ & $K_1$\tabularnewline
\hline 
$P_{3}\ $($v_3$ is an end-point) & $K_2$ & $K_2$ & $K_1$ & $K_2$\tabularnewline
\hline 
$P_{3}\ $($v_4$ is the central vertex) & $K_1\cup K_1$ & $K_2$ & $\emptyset$ & $K_1$\tabularnewline
\hline 
\end{tabular}
\end{center}

and 
\begin{center}
\begin{tabular}{|c||c|c|c|c|}
\hline 
$D(G_{i},x)$ & $D(G_{i}-v_{i},x)$ & $D(G_{i}/v_{i},x)$ & $D(G_{i}-N[v_{i}],x)$ & $D(G_{i}/N(v_{i}),x)$\tabularnewline
\hline 
\hline 
$x$ & $1$ & $1$ & $1$ & $x$\tabularnewline
\hline 
$2x+x^{2}$ & $x$ & $x$ & $1$ & $x$\tabularnewline
\hline 
$3x+3x^{2}+x^{3}$ & $2x+x^{2}$ & $2x+x^{2}$ & $1$ & $x$\tabularnewline
\hline 
$x+3x^{2}+x^{3}$ & $2x+x^{2}$ & $2x+x^{2}$ & $x$ & $2x+x^{2}$\tabularnewline
\hline 
$x+3x^{2}+x^{3}$ & $x^{2}$ & $2x+x^{2}$ & $1$ & $x$\tabularnewline
\hline 
\end{tabular}
\end{center}
but the corresponding system of equations has no solution.
\end{proof}
The proof technique here is general in the sense that it can be extended to other 
vertex elimination operations. Note, however, that for the specific operations we used here,
taking the three graphs $G_1$, $G_2$ and $G_3$ only suffices. 

Similarly, $D(G,x)$ does not satisfy any linear recurrence relation with 
the three edge operations of deletion, contraction and extraction.

\begin{thm}\label{th:non-exist_e}
There do not exist rational functions $a,b,c\in \mathbb{R}(x)$ such that
for every graph $G$ and edge $e$ of $G$ it holds that
\begin{eqnarray}
D(G,x)&=&a D(G-e,x)+b D(G/e,x)+c D(G\dagger e,x)\,.\notag
\end{eqnarray}
\end{thm}
The proof of Theorem \ref{th:non-exist_e} is similar to that of
Theorem \ref{th:non-exist_v}
with $G_1 = K_2$, $G_2 = P_3$, $G_3=K_3$ and $G_4=P_4$, and where the edge associated with $G_4$ is the middle edge.

\section{Simple recurrence steps for special cases}\label{se:special}

The domination polynomial of a graph $G$ can be computed entirely
using the recurrence relation in Theorem \ref{th:arbitrary_rec} and the multiplicativity of $D(G,x)$ with respect to disjoint union. 
However, in many cases, a simpler recurrence step will suffice. 

\begin{proposition}\label{c:nbr}
Let $G=(V,E)$ be a graph.
If $u,v \in V$ and $N[v] \subseteq N[u]$ then
\begin{gather}\label{eq:c:nbr:1}
D(G,x) = x D(G/u,x) + D(G-u,x) + x D(G-N[u],x) 
\end{gather}
and 
\begin{gather}\label{eq:c:nbr:2}
D(G,x) = (1+x) D(G-u,x) + D_{u \in W}(G-\left(N[v]\backslash\{u\} \right),x) \,.
\end{gather}
\end{proposition}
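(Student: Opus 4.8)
\emph{Proof proposal.} Both identities are driven by a single structural fact: the hypothesis $N[v]\subseteq N[u]$ forces the auxiliary polynomial $\p{u}(G,x)$ to vanish. We may assume $v\neq u$ (for $v=u$ the hypothesis is vacuous and the reduction is not valid), so that $v\in N_G(u)$. First I would show $\p{u}(G,x)=0$. By its definition $\p{u}(G,x)$ enumerates the dominating sets of $G-u$ containing no vertex of $N_G(u)$. Since $v\neq u$, the vertex $v$ lies in $G-u$, and its closed neighbourhood there is $N_{G-u}[v]=N_G[v]\setminus\{u\}\subseteq N_G[u]\setminus\{u\}=N_G(u)$. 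Hence every dominating set of $G-u$ must meet $N_G(u)$ merely in order to dominate $v$, so no set is counted by $\p{u}(G,x)$; that is, $\p{u}(G,x)=0$. Substituting this into \eqref{eq:D} of Theorem~\ref{t:red} gives \eqref{eq:c:nbr:1} at once.

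For \eqref{eq:c:nbr:2} I would start from $D(G,x)=D_{u\notin W}(G,x)+D_{u\in W}(G,x)$. By \eqref{eq:u_notin_W} together with $\p{u}(G,x)=0$, the first term equals $D(G-u,x)$, so it remains to establish $D_{u\in W}(G,x)=x\,D(G-u,x)+D_{u\in W}(G-(N[v]\setminus\{u\}),x)$. Set $S=N[v]\setminus\{u\}$; as above $S\subseteq N_G(u)$, so whenever $u\in W$ every vertex of $S$ is already dominated by $u$. The natural plan is to sort the dominating sets $W$ with $u\in W$ according to the behaviour of $W\setminus\{u\}$. On one side, the sets for which $W\setminus\{u\}$ is again a dominating set of $G-u$ are, after deleting $u$, exactly the ones enumerated by $x\,D(G-u,x)$. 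On the other side, I would match the remaining sets with the dominating sets of $G-S$ that contain $u$; the relevant fact here is that a subset $W'\subseteq V\setminus S$ with $u\in W'$ dominates $G-S$ if and only if it dominates $G$, because the only vertices lost in passing to $G-S$ are those of $S$, all of which $u$ dominates.

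The step I expect to be the main obstacle is the reconciliation of these two families: they must be shown to account for every dominating set containing $u$ exactly once, and this is precisely where $N[v]\subseteq N[u]$ has to be used at full strength. The delicate sets are those $W\ni u$ that meet $S$ yet fail to dominate $G-u$ once $u$ is removed, i.e.\ those in which some vertex is privately dominated by $u$. I would therefore concentrate the effort on a structure-driven bijection between the dominating sets $W\ni u$ for which $W\setminus\{u\}$ is not dominating in $G-u$ and the dominating sets of $G-S$ containing $u$, exploiting that $S$ lies inside $N_G(u)$ and that the entire closed neighbourhood of $v$ is absorbed by $u$; verifying that this matching is a genuine bijection is the technical heart of \eqref{eq:c:nbr:2}.
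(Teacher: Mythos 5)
Your treatment of Equation~(\ref{eq:c:nbr:1}) is correct and is exactly the paper's argument: the hypothesis $N[v]\subseteq N[u]$ forces $p_u(G,x)=0$, since every dominating set of $G-u$ must meet $N_{G-u}[v]=N_G[v]\setminus\{u\}\subseteq N_G(u)$ just to dominate $v$, and substituting this into Theorem~\ref{t:red} gives the identity. Your observation that one must take $u\neq v$ for this to work is a fair point that the paper does not make explicit.

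The step you flag as ``the technical heart'' of Equation~(\ref{eq:c:nbr:2}) is not merely unfinished --- it cannot be completed, because the identity is false as stated. The bijection you would need, between the dominating sets $W\ni u$ of $G$ for which $W\setminus\{u\}$ fails to dominate $G-u$ and the dominating sets of $G-(N[v]\setminus\{u\})$ containing $u$, does not exist: such a $W$ may meet $N[v]\setminus\{u\}$ while some other neighbour of $u$ is dominated only by $u$. Concretely, take $G=K_{1,3}$ with centre $u$ and a leaf $v$, so that $N[v]=\{u,v\}\subseteq N[u]$. Then $D(G,x)=x+3x^2+4x^3+x^4$, whereas
\begin{align*}
(1+x)D(G-u,x)+D_{u\in W}(G-v,x)&=(1+x)x^3+\left(x+2x^2+x^3\right)=x+2x^2+2x^3+x^4\,;
\end{align*}
the dominating set $\{u,v\}$, for example, is accounted for by neither term of the right-hand side. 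So your instinct that the reconciliation of the two families is the delicate point was exactly right, and no argument will close that gap. For comparison, the paper's own proof of (\ref{eq:c:nbr:2}) founders on the same rock: it asserts that $(1+x)D(G-u,x)$ enumerates all dominating sets of $G$ meeting $N_G[v]\setminus\{u\}$, but that enumeration only captures those $W$ whose restriction to $V\setminus\{u\}$ still dominates $G-u$, and misses the sets (such as $\{u,v\}$ above) in which $u$ is the sole dominator of one of its other neighbours.
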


\begin{proof}
For Equation (\ref{eq:c:nbr:1}) we use Theorem \ref{t:red}.
In this case $\p{u}(G,x)=0$ since if neither $u$ nor a vertex adjacent to 
it is in the dominating set, then $v$ (and its neighbors except $u$) can not be dominated.


For Equation (\ref{eq:c:nbr:2}) we partition the dominating sets of $G$ into two types, counted by the two terms of Equation (\ref{eq:c:nbr:2}).
$(1+x) D(G-u,x)$ counts the dominating sets of $G$ which contain at least one vertex from $N_G[v]\backslash \{u\}$.
$D_{u \in W}(G-\left(N[v]\backslash\{u\} \right),x)$ counts the dominating sets of $G$ which contain no element of $N_G[v]$ except for $u$. 
\end{proof}

\begin{proposition}\label{c:not}
Let $G=(V,E)$ be a graph.
If $u,w \in V$ and $N(w) = N(u)$ then
\begin{eqnarray*}
D(G,x) = x D(G/u,x) + D(G-u,x) - x D(G-N[u]-w,x) \,.
\end{eqnarray*}
\end{proposition}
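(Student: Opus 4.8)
The plan is to reduce to Theorem~\ref{t:red} and to exploit the fact that the hypothesis $N(w)=N(u)$ makes $w$ an isolated vertex of $G-N[u]$. First I would record the elementary consequences of the hypothesis: since $G$ is simple we have $w\notin N(w)=N(u)$, so $w\notin N[u]$ and hence $w$ is a vertex of $G-N[u]$; moreover every neighbor of $w$ lies in $N(w)=N(u)$ and is therefore deleted when passing to $G-N[u]$. Thus $w$ is isolated in $G-N[u]$.

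The first key step is to show $D(G-N[u],x)=x\,D(G-N[u]-w,x)$. This is the standard observation that an isolated vertex must belong to every dominating set: any dominating set of $G-N[u]$ contains $w$, and deleting $w$ gives a bijection onto the dominating sets of $G-N[u]-w$, with the factor $x$ accounting for $w$.

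The second key step, which I expect to be the main point requiring care, is to evaluate $\p{u}(G,x)$. Recall that $\p{u}(G,x)$ counts the dominating sets $W$ of $G-N[u]$ that additionally dominate all of $N(u)$ in $G$. Because $w$ is isolated in $G-N[u]$, every such $W$ must contain $w$; and since $w$ is adjacent in $G$ to all of $N(w)=N(u)$, the presence of $w$ in $W$ already dominates $N(u)$, so the extra condition is automatically satisfied. As $w$ dominates nothing else in $G-N[u]$, a set $W\ni w$ dominates $G-N[u]$ if and only if $W\setminus\{w\}$ dominates $G-N[u]-w$. Hence $\p{u}(G,x)=x\,D(G-N[u]-w,x)$, the same quantity as in the first step.

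Finally I would substitute both identities into Equation~(\ref{eq:D}) of Theorem~\ref{t:red}. The two terms involving the closed neighborhood collapse, since
\[
x\,D(G-N[u],x)-(1+x)\,\p{u}(G,x)=x^{2}D(G-N[u]-w,x)-(x+x^{2})D(G-N[u]-w,x)=-x\,D(G-N[u]-w,x),
\]
while the remaining terms $x\,D(G/u,x)+D(G-u,x)$ are carried over unchanged. This yields the claimed formula.
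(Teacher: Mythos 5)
Your proposal is correct and follows essentially the same route as the paper: both apply Theorem~\ref{t:red} with the vertex $u$, observe that $w$ is isolated in $G-N[u]$ so that $\p{u}(G,x)=D(G-N[u],x)=x\,D(G-N[u]-w,x)$, and then collapse the two closed-neighborhood terms. Your write-up just makes the intermediate steps (why $w\notin N[u]$, the bijection behind the factor $x$, and why $w\in W$ automatically dominates $N(u)$) more explicit than the paper does.
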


\begin{proof}
In this case $w$ is
of degree 0 in $G-N[u]$ and hence $w$ 
must exist in any dominating set for $G-N[u]$.
Since $N(w)=N(u)$ we know that $N(u)$ is dominated by every dominating
set for $G-N[u]$.
Thus we have
 $\p{u}(G,x) = D(G-N[u],x)$.
It remains to notice that $D(G-N[u],x)=xD(G-N[u]-w,x)$ since $w$ is an isolated vertex in $G-N[u]$. 
The result follows from Theorem \ref{t:red}.
\end{proof}

 Here we generalize Proposition \ref{c:not}:
 \begin{proposition}
Let $G=(V,E)$ be a graph.
 If $u,w \in V$ and $N(w) \subseteq N(u)$ then,
 \begin{eqnarray}
 D(G,x) &=& x D(G/u,x) + D(G-u,x) + x D(G-N[w],x) \notag \\
& & - x^2 D(G-N[w]/u,x) -x D(G-N[w]-u,x)\,.  \label{e:wnot}
 \end{eqnarray}
  
 \end{proposition}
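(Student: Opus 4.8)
The plan is to start from the general reduction in Theorem~\ref{t:red} applied to the vertex $u$,
\[
D(G,x) = xD(G/u,x) + D(G-u,x) + xD(G-N[u],x) - (1+x)\p{u}(G,x),
\]
so that everything reduces to understanding $\p{u}(G,x)$ under the hypothesis $N(w)\subseteq N(u)$. First I would record two easy consequences of this hypothesis: since $N(w)\subseteq N(u)$ forces $u\notin N(w)$ (otherwise $u\in N(u)$), the vertices $u$ and $w$ are non-adjacent; and since every neighbor of $w$ lies in $N(u)\subseteq N[u]$, the vertex $w$ is isolated in $G-N[u]$, whence $D(G-N[u],x)=x\,D(G-N[u]-w,x)$ by multiplicativity.

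The heart of the argument, and the step I expect to be the main obstacle, is the identity
\[
\p{u}(G,x) = x\,\p{u}(G-N[w],x).
\]
To prove it I would argue bijectively. Recall $\p{u}(G,x)$ counts the dominating sets $W$ of $G-u$ with $W\cap N(u)=\emptyset$. In any such $W$ the neighbors of $w$ (which all lie in $N(u)$) are excluded, so $w$ can only be dominated by itself; hence $w\in W$ necessarily. Writing $W=W'\cup\{w\}$, I would check that $W'$ ranges exactly over the dominating sets of $(G-N[w])-u$ that avoid $N_{G-N[w]}(u)=N(u)\setminus N(w)$, i.e. over the sets counted by $\p{u}(G-N[w],x)$: the vertices of $N[w]$ are dominated by $w$, the remaining vertices to be dominated form precisely the vertex set of $G-N[w]-u$ and the relevant edges survive in the induced subgraph, while the constraint $W\cap N(u)=\emptyset$ translates to $W'\cap(N(u)\setminus N(w))=\emptyset$ because $W'$ already avoids $N(w)$. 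The factor $x$ records the removed vertex $w$. Verifying that this correspondence is a genuine bijection---in particular that domination is neither gained nor lost when passing to the induced subgraph $G-N[w]-u$---is the delicate part.

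With the key identity in hand, the rest is bookkeeping. I would apply Theorem~\ref{t:red} a second time, now to the vertex $u$ of the graph $G-N[w]$; here one identifies $(G-N[w])-N_{G-N[w]}[u]=G-N[u]-w$, since removing $N[w]$ and then the surviving closed neighborhood of $u$ deletes exactly $N[u]\cup\{w\}$. This expresses $\p{u}(G-N[w],x)$ through $D(G-N[w]/u,x)$, $D(G-N[w]-u,x)$, $D(G-N[u]-w,x)$ and $D(G-N[w],x)$. Substituting $\p{u}(G,x)=x\,\p{u}(G-N[w],x)$ back into the first reduction and multiplying through, the terms $xD(G-N[u],x)$ and $x^2D(G-N[u]-w,x)$ cancel precisely because of the isolated-vertex identity $D(G-N[u],x)=x\,D(G-N[u]-w,x)$ noted above, and what remains is exactly~(\ref{e:wnot}). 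This mirrors, and specializes to, the proof of Proposition~\ref{c:not}.
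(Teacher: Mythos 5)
Your proposal is correct and follows essentially the same route as the paper: both apply Theorem~\ref{t:red} at $u$ to $G$ and to $G-N[w]$, and combine the two via the identities $\p{u}(G,x)=x\,\p{u}(G-N[w],x)$ and $D(G-N[u],x)=x\,D(G-N[w]-N[u],x)$. The only difference is that you spell out a careful bijective justification of the key identity $\p{u}(G,x)=x\,\p{u}(G-N[w],x)$, which the paper simply asserts; your verification of it is sound.
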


 \begin{proof}
 
 Applying Theorem \ref{t:red} on $G-N[w]$ and $G$, both with the vertex $u$, and rearranging the terms we get
 \begin{gather*}
 D(G-N[w],x) - x D(G-N[w]/u,x) -D(G-N[w]-u,x) = \\ 
 x D(G-N[w]-N[u],x)  - (1+x) \p{u}(G-N[w],x) 
\end{gather*}
and 
\begin{gather*}
 D(G,x) - x D(G/u,x) - D(G-u,x) = x D(G-N[u],x) - (1+x) \p{u}(G,x)\,.
 \end{gather*}

Using that $G-N[u] = \{w\} \cup (G-N[w]-N[u])$ we have 
$D(G-N[u],x) = x D( G-N[w]-N[u],x )$ and $\p{u}(G,x)=x\p{u}(G-N[w],x)$, and
the proposition follows. 
 \end{proof}

\begin{example}[Vertices of degree 1]
Let $G=(V,E)$ be a graph.
Let $v$ be a vertex of degree $1$ in a graph $G$ and let $u$ be its neighbor.
Then $N[v]= \{u,v\} \subseteq N[u] $ and so, by Proposition \ref{c:nbr}:
\begin{eqnarray*}
D(G,x) &=& x D(G/u,x) + D(G-u,x)  + x D(G-N[u],x)  \\
 &=& x D(G/u,x) + x D(G-u-v,x)  + x D(G-N[u],x)  \\
 &=& x \left( D(G/u,x) + D(G-u-v,x)  + D(G-N[u],x)  \right)\,.
\end{eqnarray*}
\end{example}

\begin{example}[Vertices of degree $|V(G)| -2$]
Let $G=(V,E)$ be a graph.
Let $u$ be such a vertex adjacent to all vertices in $G$ except for $w$.
If there exists a vertex $v \in N(u)$ such that $\{v,w\} \not \in E$ then
$N[v] \subseteq N[u]$ and so Proposition \ref{c:nbr} will apply:
\begin{eqnarray*}
D(G,x) &=& x D(G/u,x) + D(G-u,x) + x D(G-N[u],x)  \\
 &=& x D(G/u,x) + D(G-u,x) + x^2\,.
\end{eqnarray*}

Otherwise, $N(w)=N(u)$ and hence by Proposition \ref{c:not} 
\begin{eqnarray*}
D(G,x) &=& x D(G/u,x) + D(G-u,x) - D(G-N[u],x)  \\
 &=& x D(G/u,x) + D(G-u,x) - x \\
 &=& x ((1+x)^{|V|-1} -1 ) + D(G-u,x) - x \\
 &=& x(1+x) ((1+x)^{|V|-2} - 1 ) + D(G-u,x) \,.
\end{eqnarray*}
\end{example}

\subsection{Removing triangles}

Theorem \ref{t:red} can be used to give a recurrence relation which removes triangles. 
We define a new operation on edges incident to a vertex $u$: 
we denote by $G\triangleOp u$ the graph obtained from $G$ by 
the removal of all edges between any pair of neighbors of $u$. Note $u$ is not removed from the graph.
This recurrence relation is useful on graphs which have many triangles. 

\begin{proposition}\label{th:clearing}
Let $G=(V,E)$ be a graph and $u\in V$. Then
\[
 D(G,x) =  D(G-u,x) + D(G \triangleOp u,x) - D(G\triangleOp u -u,x)  \,.
\] 
\end{proposition}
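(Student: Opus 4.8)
The plan is to split the dominating sets of $G$ according to whether they contain $u$, and to show that the operation $\triangleOp u$ leaves both pieces of the relevant bookkeeping invariant. The observation driving everything is that $G$ and $G\triangleOp u$ differ only in edges joining two neighbors of $u$, and such edges can only influence the domination of vertices in $N(u)$ itself; every vertex outside $N[u]$ retains the same neighborhood in both graphs.

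First I would establish that $D_{u\in W}(G,x) = D_{u\in W}(G\triangleOp u,x)$. If $u$ belongs to a set $W$, then $u$ dominates all of $N(u)$ in both graphs, so the deleted edges are irrelevant to the neighbors of $u$; and since every vertex outside $N[u]$ keeps its neighborhood, $W$ dominates $G$ if and only if it dominates $G\triangleOp u$. Next I would prove that $\p{u}(G,x) = \p{u}(G\triangleOp u,x)$. Here $N_{G\triangleOp u}(u) = N_G(u)$, and $\p{u}$ counts the dominating sets of the graph-minus-$u$ that avoid all of $N(u)$. For such a set $W$, any vertex $y\in N(u)$ can only be dominated by a neighbor lying outside $N(u)$, since its neighbors inside $N(u)$ are forbidden from $W$; and those outside neighbors are common to both graphs, while vertices outside $N[u]$ are again unaffected. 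Hence the same sets are counted in both cases.

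With these two invariances in hand, the identity falls out of Theorem~\ref{t:red}. Combining $D(H,x) = D_{u\in W}(H,x) + D_{u\notin W}(H,x)$ with Equation~(\ref{eq:u_notin_W}) gives $D(H,x) - D(H-u,x) = D_{u\in W}(H,x) - \p{u}(H,x)$ for any graph $H$. Applying this with $H = G\triangleOp u$ and substituting the two invariances rewrites $D(G\triangleOp u,x) - D(G\triangleOp u - u,x)$ as $D_{u\in W}(G,x) - \p{u}(G,x)$. Adding $D(G-u,x)$ and invoking Equation~(\ref{eq:u_notin_W}) once more for $G$ itself then reassembles the right-hand side into $D_{u\notin W}(G,x) + D_{u\in W}(G,x) = D(G,x)$.

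The main obstacle is the careful verification of the two invariance claims: pinning down exactly which vertices can change their domination status when edges between neighbors of $u$ are removed, and checking that the two constraints ($u\in W$ in the first case, $W\cap N(u)=\emptyset$ in the second) rule out precisely those changes. Once this case analysis is settled, the remaining algebra is a direct substitution.
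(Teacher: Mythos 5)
Your proof is correct, and it takes a mildly but genuinely different route from the paper's. Both arguments rest on the same structural fact --- that $D(G,x)-D(G-u,x)$ is unchanged when the edges inside $N(u)$ are deleted --- but you certify this invariance differently. The paper writes $D(G,x)-D(G-u,x)=xD(G/u,x)+xD(G-N[u],x)-(1+x)\p{u}(G,x)$ via Equation~(\ref{eq:D}), observes that $(G\triangleOp u)/u\cong G/u$, $(G\triangleOp u)-N[u]\cong G-N[u]$ and $\p{u}(G\triangleOp u,x)=\p{u}(G,x)$, and then applies Equation~(\ref{eq:D}) a second time to $G\triangleOp u$ to reassemble the right-hand side. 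You instead write $D(G,x)-D(G-u,x)=D_{u\in W}(G,x)-\p{u}(G,x)$, using only the easier half of Theorem~\ref{t:red} (Equation~(\ref{eq:u_notin_W})) together with the trivial decomposition $D=D_{u\in W}+D_{u\notin W}$, and prove directly that $D_{u\in W}$ and $\p{u}$ are invariant under $\triangleOp u$. Both invariance claims are sound: when $u\in W$ every vertex of $N(u)$ is already dominated by $u$, and in the $\p{u}$ count the vertices of $N(u)$ are excluded from $W$ and so must be dominated from outside $N(u)$, so in neither case can the deleted edges matter, while vertices outside $N[u]$ keep their neighborhoods. Your version is slightly more self-contained --- it avoids verifying the two graph isomorphisms and never invokes Equation~(\ref{eq:u_in_W}) --- whereas the paper's version reuses exactly the invariants that make Theorem~\ref{t:red} applicable twice, giving a two-line derivation.
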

\begin{proof}
First note that, since the operation $\triangleOp u$  only removes the edges between vertices in $N(u)$, 
these relations will hold:
\[
(G \triangleOp u)/u \cong G{/ u}   ~,~~~%
\p{u}(G,x)= \p{u}(G\triangleOp u,x)   ~,~~~
(G\triangleOp u) - N[u] \cong (G-N[u])\,.
\]
Using these relations with Theorem \ref{t:red}, we have
\begin{eqnarray*}
 D(G,x) -  D(G-u,x) &=& 
x D(G/u,x) + x D(G-N[u],x) - (1+x) \p{u}(G,x) \\
 &=&
x D((G \triangleOp u)/u,x) + x D((G\triangleOp u)-N[u],x) \\ 
&&- (1+x) \p{u}(G\triangleOp u,x) \,.
\end{eqnarray*}
Now we apply Theorem \ref{t:red} to $G\triangleOp u$ and get that the right-hand side of the latter equation
equals
\[
D(G \triangleOp u,x) - D((G \triangleOp u)-u,x)\,,
\]
and the proposition follows.
\end{proof}

\subsection{Induced \texorpdfstring{$5$}{5}-paths}
In \cite{pathof2} the following recurrence formula was proved in terms 
of edge contractions:
\begin{theorem}
Suppose $G$ is a graph which contains five vertices $u,v,w,y,z$ that form a path in that order such that the degrees of $v$, $w$ and $y$ are 2.
Then
\begin{equation}\label{t:path}
D(G,x) = x  \left( 
D(G/w,x) +
D(G/v/w,x) +
D(G/v/w/y,x) 
\right)\,.
\end{equation}
\end{theorem}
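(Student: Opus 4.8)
The plan is to peel the three contractions off one boundary vertex at a time, pushing everything onto the graph $G' = G - N[w] = G-\{v,w,y\}$, which still contains the two path-endpoints $u$ and $z$, and then to check a single finite identity among constrained counts of dominating sets of $G'$.

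First I would apply Theorem~\ref{t:red} to the central vertex $w$. Since $N[w]=\{v,w,y\}$, this yields
\[ D(G,x) = x D(G/w,x) + D(G-w,x) + x D(G',x) - (1+x)\p{w}(G,x), \]
so, after matching the term $x D(G/w,x)$ that already appears on the desired right-hand side, it remains to prove
\[ D(G-w,x) + x D(G',x) - (1+x)\p{w}(G,x) = x D(G/v/w,x) + x D(G/v/w/y,x). \]

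Next I would make all the auxiliary graphs explicit, using crucially that $v$, $w$ and $y$ have degree $2$, so that every contraction merely re-routes the path without touching the rest of $G$. One checks that $G/v/w/y \cong G'+\{u,z\}$ (that is, $G'$ with the single edge $uz$ added), that $G/v/w$ is $G'$ together with one extra vertex made adjacent to exactly $u$ and $z$, that $G-w$ is $G'$ with a pendant vertex attached to $u$ and another attached to $z$, and that $\p{w}(G,x)$ enumerates precisely the dominating sets of $G'$ that contain both $u$ and $z$ (the two omitted neighbours of $w$ become pendants in $G-w$ that force $u$ and $z$ into the set).

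The final step is the bookkeeping. I would expand each of the five polynomials above as a generating function over the subsets $W'\subseteq V(G')$ that dominate the interior $V(G')\setminus\{u,z\}$, recording only two binary data at each boundary vertex: whether it is dominated by $W'$ inside $G'$, and whether it lies in $W'$. Substituting the identifications above, the remaining identity becomes a purely formal linear relation among these constrained generating functions, which I would verify by partitioning the subsets $W'$ according to whether $u$ and whether $z$ is dominated by $W'$ (using that membership in $W'$ implies being dominated). The main obstacle is exactly this matching at the two boundary vertices: their domination can be supplied either internally by $W'$ or externally by a path/contraction vertex, and one must account for every combination consistently; by contrast the interior of $G'$ and the multiplicativity over the rest of the graph play no role and can be carried along untouched.
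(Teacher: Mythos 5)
Your proposal follows the same route as the paper's proof: apply Theorem~\ref{t:red} at the central vertex $w$, match the term $xD(G/w,x)$, and verify the residual identity by a finite case analysis at the two boundary vertices $u$ and $z$ of $G'=G-N[w]$. Your structural identifications are all correct: $G/v/w/y\cong G'$ plus the edge $\{u,z\}$, $G/v/w\cong G'$ plus one new vertex adjacent to exactly $u$ and $z$, $G-w\cong G'$ with pendants attached at $u$ and $z$, and $p_w(G,x)=D_{u\in W,\,z\in W}(G',x)$. The only real difference is the granularity of the bookkeeping, and there your version is the more careful one. The paper sums only over dominating sets $S$ of $H=G'$ and splits into three rows according to $|S\cap\{u,z\}|$; as written, this does not account for dominating sets of $G-w$, $G/v/w$ or $G/v/w/y$ whose restriction to $V(G')$ fails to dominate $u$ or $z$ (for instance, a set containing $v$ but not $u$, with $u$ dominated only by the pendant $v$ --- such sets exist already for $G=P_5$). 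Your scheme --- summing over subsets that dominate only the interior $V(G')\setminus\{u,z\}$ and recording at each of $u,z$ the three states ``in $W'$'', ``dominated by $W'$ but not in it'', ``not dominated by $W'$'' --- is exactly what is needed to cover those configurations, and the resulting six cases (up to the $u\leftrightarrow z$ symmetry) do all verify the identity. One caution: your closing sentence suggests partitioning merely by whether $u$ and $z$ are dominated; you must keep both recorded bits, since the factor contributed at $v$ in $G-w$ is $(1+x)$ only when $u$ actually belongs to $W'$ and is $x$ when $u$ is dominated without belonging to it.
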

Note that the order of contracting $v$, $w$ and $y$ in $D(G/v/w/y,x)$ does not matter. The same is true 
for deleting $v$, $w$ and $y$ in $D(G-v-w-y,x)$. 

We can verify this result using Theorem \ref{t:red} as follows:

\begin{proof}
Suppose the five vertices in the induced 
path are $u$, $v$, $w$, $y$ and $z$ in order along the path.
We apply Theorem \ref{t:red} to the central vertex, $w$:
\[
D(G,x) = x D(G/w,x) + D(G-w,x) + x D(G-v-w-y,x) - (1+x) \p{w}(G,x)\,.
\]

Note that the first term is the same as in Equation (\ref{t:path}), 
so it remains to prove that
\begin{equation}\label{e:path}
 x D(G-v-w-y,x) - (1+x) \p{w}(G,x) 
 + D(G-w,x)  =
x D(G/v/w,x) + x D(G/v/w/y,x) \,.
\end{equation}
Let $H:= G-N[w]$, and let $S$ be a dominating set for $H$.
We wish to extend $S$ to a dominating set for each of the graphs in 
Equation (\ref{e:path}) by considering whether or not $v$ and/or $y$ must
or may be added to $S$.
For $S$ to dominate $N(w)$ it
must include both $u$ and $z$. We break the proof here
into 3 cases, dependent on how many of $u$ and $z$ are in $S$.


When $u$, say, 
is in $S$ it will dominate $v$ and so $v$ can either be in $S$ or out
of it, giving a factor of $(1+x)$ to multiply the domination polynomial 
of $H$ by. 
If $u$ is not in $S$ then $v$ must 
be in $S$ in order for $S$ to be a dominating set, giving a factor of $x$
for $H$.
We tabulate the respective contributions for vertices $v$ and $y$ in
the different graphs, substituting
 $q(x):=x D(G-v-w-y,x) - (1+x) \p{w}(G,x)$:

\begin{table}[h]
\begin{center}
\begin{tabular}{|c||c|c|c|c|}\hline
$|S \cap \{w,y\}|$ & 
 $q(x)$ &
 $D(G-w,x)$ &
$x D(G/v/w,x)$ & 
$x D(G/v/w/y,x) $ \tabularnewline
 \hline \hline 
2 & -1 & $(1+x)^2$ & $x(1+x)$ & $x$ \tabularnewline
1 & x & $x(1+x)$ & $x(1+x)$ & $x$ \tabularnewline
0 & x & $x^2$ & $x^2$ & $x$ \tabularnewline
\hline
\end{tabular}
\end{center}
\caption{Table of contributions from vertices $v$ and $y$}
\end{table}

For each of these rows we can see that Equation (\ref{e:path}) is satisfied
by adding both pairs of columns.
Since all of the possibilities for $S$
fall into one of these three cases, the proof is complete.
\end{proof}

\subsection{Irrelevant edges}
The easiest recurrence relation one might think of is to remove an edge and
to compute the domination polynomial of the graph arising instead of
the one for the original graph. Indeed, for the domination polynomial
of a graph there might be such irrelevant edges, that 
can be deleted without changing the value of the domination
polynomial at all. We characterize edges possessing this property.

\begin{Definition}
Let $G = (V,E)$ be a graph. An \emph{irrelevant edge} is an edge $e \in E$ of $G$, such that
\begin{align}
  D(G, x) = D(G-e, x).
\end{align}
\end{Definition}

\begin{Definition}
  Let $G = (V, E)$ be a graph. A vertex $v \in V$ of $G$ is
  \emph{domination-covered}, if every dominating set of $G-v$
  includes at least one vertex adjacent to $v$ in $G$.
\end{Definition}
In other words, each dominating set of $G-v$ is a dominating set
of $G$ and the dominating sets of $G-v$ are exactly those
dominating sets of $G$ not including the vertex $v$.

\begin{Theorem}
  Let $G = (V, E)$ be a graph. A vertex $v \in V$ of $G$ is
  domination-covered if and only if there is a vertex $u \in N_G[v]$
  such that $N_G[u] \subseteq N_G[v]$.
\end{Theorem}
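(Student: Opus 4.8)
The plan is to prove both directions of the biconditional by unpacking the definition of domination-covered, namely that $v$ is domination-covered if and only if every dominating set of $G-v$ contains some vertex of $N_G(v)$. Equivalently, $v$ is domination-covered precisely when there is \emph{no} dominating set of $G-v$ avoiding all of $N_G(v)$. I note that the generating function counting exactly these ``bad'' sets is $\p{v}(G,x)$, so the clean reformulation is that $v$ is domination-covered if and only if $\p{v}(G,x)=0$. This is the same vanishing condition that drove Proposition~\ref{c:nbr}, which is a strong hint that the two statements are closely linked.

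For the backward direction, suppose there is a vertex $u \in N_G[v]$ with $N_G[u] \subseteq N_G[v]$. I would argue that $v$ is domination-covered by showing $\p{v}(G,x)=0$: take any set $W \subseteq V \setminus N_G[v]$ (a set avoiding the closed neighborhood of $v$). I claim such a $W$ cannot dominate $G-v$, because the vertex $u$ is not dominated. Indeed, every vertex that could dominate $u$ lies in $N_G[u] \subseteq N_G[v]$, so $W$, being disjoint from $N_G[v]$, contains no vertex of $N_G[u]$; hence $u \notin N_G[W]$. (The two cases $u=v$ and $u \neq v$ should be checked, but both go through since $u \in N_G[v]$ guarantees $u$ is a vertex of $G-v$ unless $u=v$, and if $u=v$ the hypothesis $N_G[u]\subseteq N_G[v]$ is trivial and one argues directly that $v$'s neighbors must appear.) Thus no such $W$ is dominating, $\p{v}(G,x)=0$, and $v$ is domination-covered.

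For the forward direction I would argue the contrapositive: assume that for \emph{every} $u \in N_G[v]$ we have $N_G[u] \not\subseteq N_G[v]$, and construct a dominating set of $G-v$ that avoids $N_G(v)$, witnessing $\p{v}(G,x)\neq 0$ and hence that $v$ is not domination-covered. The natural candidate is $W = V \setminus N_G[v]$ itself. The hypothesis gives, for each $u \in N_G[v]$, a vertex $u' \in N_G[u]$ with $u' \notin N_G[v]$, i.e. $u' \in W$; this $u'$ dominates $u$. Vertices outside $N_G[v]$ dominate themselves since they lie in $W$. Therefore $W$ dominates every vertex of $G-v$ while containing no vertex of $N_G(v)$, so $W$ is counted by $\p{v}(G,x)$ and $v$ is not domination-covered.

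The main obstacle I anticipate is handling the edge cases cleanly rather than any deep difficulty: one must be careful about whether $u=v$ is permitted (the statement writes $u \in N_G[v]$, which includes $v$), about isolated vertices of $v$ or of $G-v$, and about the degenerate situation where $N_G[v]=V$ so that $W=V\setminus N_G[v]$ is empty. In that last case the empty set is the only candidate in $\p{v}$, and I would verify separately that it dominates $G-v$ exactly when $G-v$ has no vertices, which must be reconciled with the neighborhood condition; attending to these boundary cases is where the argument needs the most care, but the core logic is the symmetric ``$u$ is dominated only from within $N_G[v]$'' observation above.
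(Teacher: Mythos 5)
Your two main arguments coincide with the paper's: for sufficiency, every vertex capable of dominating $u$ lies in $N_G[u]\subseteq N_G[v]$, so a set disjoint from $N_G[v]$ cannot dominate $G-v$; for necessity, the contrapositive with the explicit witness $V\setminus N_G[v]$. The reformulation via $p_v(G,x)=0$ is a faithful restatement of the definition of domination-covered, so the substance matches the paper's proof.

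The step that fails is your parenthetical treatment of $u=v$. Since $v\in N_G[v]$ and $N_G[v]\subseteq N_G[v]$ always hold, admitting $u=v$ as a witness makes the right-hand side of the equivalence true for every vertex, and your assertion that in this case ``one argues directly that $v$'s neighbors must appear'' is false: for an end-vertex $v$ of the path $P_3$ with vertex sequence $v,a,b$, the set $\{b\}$ dominates $G-v$ yet contains no neighbor of $v$, so $v$ is not domination-covered although $u=v$ trivially satisfies the hypothesis. Relatedly, the hypothesis of your contrapositive, ``for every $u\in N_G[v]$ we have $N_G[u]\not\subseteq N_G[v]$,'' can never be satisfied (again because of $u=v$), so that direction as literally written proves nothing. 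Both problems disappear once the witness is restricted to $u\in N_G(v)$, i.e.\ $u\ne v$: this is exactly what your sufficiency argument uses (it needs $u$ to be a vertex of $G-v$ that must be dominated) and exactly what your necessity argument produces. The paper's own proof makes the same silent restriction, so this is partly a defect of the statement as given, but ``both cases go through'' is not a claim you can make for $u=v$.
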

In case the condition of the theorem above is satisfied, we say that
the vertex $v$ is domination-covered by the vertex $u$.

\begin{proof}
  First we prove that, if there is such a vertex $u$, then $v$ is
  domination-covered. To dominate $u$, either $u$ or a vertex
  adjacent to $u$ must be in each dominating set of
  $G-v$. Because every vertex adjacent to $u$ is also adjacent
  to $v$ in $G$, the statement follows.

  Secondly we prove that, if $v$ is domination-covered, there is such
  a vertex $u$. Assume that $v$ is domination-covered, but there
  is no such vertex, that means each vertex adjacent to $v$ in $G$
  has at least one neighbor not adjacent to $v$ in $G$. Then the
  set $V \setminus N[v]$ is a dominating set of $G-v$, but not
  of $G$, which contradicts the assumption.
\end{proof}

\begin{Theorem}
  Let $G = (V, E)$ be a graph. An edge $e = \{u, v\} \in E$ is an
  irrelevant edge in $G$ if and only if $u$ and $v$ are
  domination-covered in $G-e$.
\end{Theorem}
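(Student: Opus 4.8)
The plan is to translate both sides of the claimed equivalence into the vanishing of the polynomials $p_{u,v}(G,x)$ and $p_{v,u}(G,x)$ defined just before Lemma~\ref{prop:p_w}. Recall from the proof of Theorem~\ref{th:arbitrary_rec} that
\[
D(G,x) - D(G-e,x) = p_{u,v}(G,x) + p_{v,u}(G,x),
\]
where $p_{u,v}(G,x)$ enumerates the dominating sets $W$ of $G-u$ with $W\cap N_G(u)=\{v\}$. Since $p_{u,v}$ and $p_{v,u}$ have non-negative coefficients, their sum is the zero polynomial if and only if each of them is. Thus the first reduction reads: $e$ is irrelevant, i.e. $D(G,x)=D(G-e,x)$, if and only if $p_{u,v}(G,x)=0$ and $p_{v,u}(G,x)=0$.

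The core of the argument is then a single-vertex statement: $u$ is domination-covered in $G-e$ if and only if $p_{u,v}(G,x)=0$, and symmetrically for $v$. Working directly from the definition, $u$ is domination-covered in $G-e$ exactly when every dominating set of $(G-e)-u=G-u$ contains a neighbour of $u$ in $G-e$, that is, a vertex of $N_G(u)\setminus\{v\}$. The easy direction is immediate: any $W$ counted by $p_{u,v}(G,x)$ satisfies $W\cap N_G(u)=\{v\}$, hence avoids $N_G(u)\setminus\{v\}$, and so witnesses that $u$ is not domination-covered in $G-e$. Therefore $p_{u,v}\neq 0$ implies $u$ is not domination-covered in $G-e$.

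The delicate converse is where the main obstacle lies. A witness $W$ to the failure of domination-coverage only gives $W\cap(N_G(u)\setminus\{v\})=\emptyset$, i.e. $W\cap N_G(u)\subseteq\{v\}$, which a priori allows $W\cap N_G(u)=\emptyset$; such a $W$ is \emph{not} counted by $p_{u,v}$, so one cannot conclude $p_{u,v}\neq0$ directly. The trick that closes this gap is to enlarge the witness by $v$: since a superset of a dominating set is still dominating, $W':=W\cup\{v\}$ is a dominating set of $G-u$, and now $W'\cap N_G(u)=\{v\}$, so $W'$ is counted by $p_{u,v}(G,x)$ and hence $p_{u,v}\neq0$. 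This establishes that $u$ not domination-covered in $G-e$ implies $p_{u,v}\neq0$, completing the equivalence $u$ domination-covered in $G-e\iff p_{u,v}(G,x)=0$.

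Combining the three equivalences finishes the proof: $e$ is irrelevant $\iff p_{u,v}=0$ and $p_{v,u}=0$ $\iff u$ and $v$ are both domination-covered in $G-e$. I would present the single-vertex lemma first (stated for $u$, with $v$ handled by symmetry) and then assemble the pieces; the only step needing genuine care is the empty-intersection case, resolved by the enlargement argument above.
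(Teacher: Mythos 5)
Your proof is correct. It reaches the same combinatorial core as the paper's argument but packages it differently: the paper argues directly with witness sets, showing (for one direction) that a set $W$ dominating $G$ but not $G-e$ must ``use'' the edge, so that w.l.o.g.\ $u\in W$ and $N_{G-e}[v]\cap W=\emptyset$, whence $v$ is not domination-covered; and (for the other) that if $u$ is not domination-covered then the explicit set $V\setminus N_{G-e}[u]$ is dominating in $G$ but not in $G-e$, a step that leans on the paper's preceding characterization of domination-covered vertices. You instead route everything through the identity $D(G,x)-D(G-e,x)=p_{u,v}(G,x)+p_{v,u}(G,x)$ together with non-negativity of coefficients, and reduce the theorem to the clean single-vertex equivalence ``$u$ is domination-covered in $G-e$ iff $p_{u,v}(G,x)=0$.'' This buys a transparent decoupling of the roles of $u$ and $v$ (one polynomial per vertex) and makes the ``if and only if'' mechanical once the identity is in hand; your enlargement trick $W\mapsto W\cup\{v\}$ correctly handles the only delicate case (a witness with $W\cap N_G(u)=\emptyset$) and plays exactly the role that the paper's canonical witness $V\setminus N_{G-e}[u]$ (which already contains $v$) plays, while avoiding any appeal to the characterization theorem. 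The paper's version is more self-contained at the level of sets; yours integrates the result more tightly with the machinery of Lemma~\ref{prop:p_w} and Theorem~\ref{th:arbitrary_rec}. Both are valid.
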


\begin{proof}
  We start with the graph $G-e$ and show that no additional
  dominating set arises by inserting the edge $e$ if and only if
  $u$ and $v$ are domination-covered in $G-e$.

  First we prove, that if both vertices are domination-covered, then
  $e$ is an irrelevant edge. Assume $e$ is not irrelevant, that
  means there is a vertex subset $W$ which is a dominating set in
  $G$ but not in $G-e$. Consequently the edge $e$ must be
  ``used'', meaning that w.l.o.g. $u \in W$ and $N_{G-e}[v]
  \cap W = \emptyset$. But then $v$ is not domination-covered in
  $G-e$, which contradicts the assumption.

  Second we prove, that if $e$ is an irrelevant edge, then both
  vertices are domination-covered. Assume at least one vertex is not
  domination-covered in $G-e$, say $u$. Then $V \setminus
  N_{G-e}[v]$ is a dominating set in $G$ but not in $G-e$,
  which contradicts the assumption.
\end{proof}

The theorem above can be applied to the calculation of the domination
polynomial of corona graphs $G \circ H$.

For two graphs $G = (V, E)$ and $H$, the \emph{corona} $G \circ H$ is
the graph arising from the disjoint union of $G$ with $\abs{V}$ copies
of $H$, named $H_1,\ldots, H_{|V|}$, by adding edges between the $i$th vertex of $G$
and all vertices of $H_i$ for $1\le i\le |V|$.

\begin{Theorem}
Let $G = (V, E)$ and $H = (W, F)$ be non-empty graphs. Then
\begin{align}
D(G \circ H, x) = [D(K_1 \circ H, x)]^{\abs{V}} = [x(1+x)^{\abs{W}} + D(H, x)]^{\abs{V}}\,.
\end{align}
\end{Theorem}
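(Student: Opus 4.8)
The plan is to read off $D(G \circ H, x)$ from the structure of the corona by first showing that every edge of $G$ becomes irrelevant once the copies of $H$ are attached, and then computing $D(K_1 \circ H, x)$ directly. Fix an edge $e = \{v_i, v_j\}$ belonging to $G$ inside $G \circ H$. By the characterization of irrelevant edges it suffices to verify that both $v_i$ and $v_j$ are domination-covered in $(G \circ H) - e$. For $v_i$, since $H$ is non-empty its copy $H_i$ contains some vertex $w$; this $w$ lies in $N[v_i]$, and $N[w] = \{w, v_i\} \cup N_{H_i}(w) \subseteq \{v_i\} \cup V(H_i) \subseteq N[v_i]$, so by the closed-neighborhood characterization of domination-coveredness the vertex $v_i$ is domination-covered by $w$. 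The identical argument applies to $v_j$, and hence $e$ is irrelevant.

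Next I would use this to strip all of $G$'s edges without changing $D$. The crucial observation is that the domination-covered witness $w$ for each apex $v_i$ lives entirely inside the private copy $H_i$, which is untouched by the deletion of edges of $G$; thus the argument above remains valid no matter which edges of $G$ have already been removed, and I can delete the edges of $G$ one at a time, preserving $D$ at each step. Once every edge of $G$ is gone, $G \circ H$ splits into $|V|$ disjoint components, each consisting of a single vertex joined to all vertices of one copy of $H$, i.e.\ each isomorphic to $K_1 \circ H$. Applying multiplicativity of $D$ with respect to components then yields the first equality $D(G \circ H, x) = [D(K_1 \circ H, x)]^{|V|}$.

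For the second equality I would compute $D(K_1 \circ H, x)$ by splitting the dominating sets of $K_1 \circ H$ according to whether the apex vertex $v$ (the $K_1$) belongs to the set. If $v$ is in the set, then $v$ dominates itself together with all of $W$, so every subset of $W$ extends $\{v\}$ to a dominating set, contributing $x(1+x)^{|W|}$. If $v$ is not in the dominating set $S$, then $S \subseteq W$, and a vertex $w \in W$ is dominated precisely when $w$ or an $H$-neighbor of $w$ lies in $S$; this is exactly the condition that $S$ dominates $H$, and any such $S$ is non-empty and therefore also dominates $v$. Hence the sets with $v \notin S$ are exactly the dominating sets of $H$, contributing $D(H, x)$, and summing the two cases gives $D(K_1 \circ H, x) = x(1+x)^{|W|} + D(H, x)$.

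The main obstacle I anticipate is not any single computation but the bookkeeping in the first step: one must ensure that irrelevance of the edges of $G$ is genuinely preserved throughout the sequential deletion rather than merely holding for each edge in the original graph. This is exactly what the observation above handles, since the domination-covered certificate for each apex is insensitive to the presence or absence of the other edges of $G$.
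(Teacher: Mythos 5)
Your proof is correct and follows essentially the same route as the paper: delete every edge of $G$ inside the corona via the irrelevant-edge criterion (each apex being domination-covered by any vertex of its private copy of $H$), then evaluate $D(K_1\circ H,x)$ by casing on whether the apex lies in the dominating set. You are in fact somewhat more careful than the paper's own proof, which does not explicitly justify that irrelevance persists under sequential deletion nor that, in the apex-excluded case, a dominating set of the non-empty graph $H$ is automatically non-empty and hence dominates the apex.
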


\begin{proof}
  In the corona of two non-empty graphs, each vertex $u \in V$ of $G$
  is adjacent to all vertices of the corresponding copy of $H$,
  therefore the vertex $u$ is domination-covered (by each vertex of
  the according copy of $H$). Thus, we can delete all edges in $E$
  (from the original graph $G$) in the corona and the arising
  graph is the disjoint union of $\abs{V}$ copies of the corona $K_1
  \circ H$, which proves the first identity. In each of these coronas
  a dominating vertex set either includes $u$ (the vertex originally
  in $G$) and an arbitrary subset of the $\abs{W}$ vertices from the
  copy of $H$. Or, it does not include $u$ and equals exactly a
  dominating vertex set of (the copy of) $H$, which proves the second
  identity.
\end{proof}

This result generalizes the statement for a special corona graph $H =
K_1$ as given in \cite[Theorem 1]{Akbari2010}.

\section{Articulations\label{sec:articulations}\label{se:art}}
For many graph polynomials, it is possible to calculate the
polynomial of a graph with an articulation by calculating
the polynomial of some graphs related to those graphs
arising from separating the graph at the articulation. Here we will prove
that this idea is also workable for the domination polynomial.

Throughout this section we will be
considering three polynomials from Definition \ref{def:gen-dom}: 
\(D_{v \in W}(G, x)\),
\(D_{v \notin W}(G, x)\) and \(D_{v \notin N_G[W]}(G-v,x) = p_{v}(G,x)\).
The first
and the second polynomials count exactly those dominating vertex sets of
\(G\) including \(v\) and not including \(v\), respectively. The third
polynomial counts the vertex subsets of $G-N[v]$ which dominate the vertices
of $V\setminus\{v\}$ in $G$.
Alternatively, this polynomial is also $p_v (G,x)$
(from Section \ref{se:arb}) and
counts all dominating vertex sets
of \(G-v\) that do not include any neighbor of \(v\) (in \(G\)).

\begin{Definition}\label{de:art}
  Let $G=(V,E)$ be a graph and $v\in V$. We define the following two vectors
  \begin{align*}
    \uvec_v(G) &= 
    \begin{pmatrix}
      D_{v \notin W}(G, x) \\
      D_{v \notin N_G[W]}(G-v, x) \\
      \frac{1}{x} D_{v \in W}(G, x)
    \end{pmatrix}\,,
    &
    \dvec_v(G)&=
    \begin{pmatrix}
      D(G-v,x) \\
      D(G,x) \\
      D(G+\{v,\cdot\},x)
    \end{pmatrix}
  \end{align*}  
  and the matrices $\Pmat$ and $\Qmat$ by
  \begin{align*}
    \Pmat&=
    \begin{pmatrix}
      1 & 0 & 0 \\
      0 & 1 & 1 \\
      x & 0 & 1+x
    \end{pmatrix}\,,
    &
    \Qmat &= 
    \begin{pmatrix}
      1 & 1 & 0 \\
      1 & 0 & 0 \\
      0 & 0 & x
    \end{pmatrix}\,.
  \end{align*}
\end{Definition}

The domination polynomial of a graph \(G\) with an articulation \(v\)
can be computed from the vectors defined above for the graphs arising
from the splitting at \(v\) as shown below:

\begin{Theorem}\label{thm:articulation}
Let $G=(V,E)$ be a graph with splitting $(G^1,G^2,\{v\})$ and articulation $v\in V$. 
Then
\begin{align}
D(G, x) &= \uvec_v(G^1)^T \Qmat \uvec_v(G^2)\,.\label{eq:Q12}
\end{align}
\end{Theorem}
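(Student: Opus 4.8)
The plan is to expand the bilinear form on the right of \eqref{eq:Q12} and then match its four terms to four classes of dominating sets of $G$, grouped according to whether $v$ lies in the set. Denoting the three coordinates of $\uvec_v(G^j)$ by $a_j=D_{v\notin W}(G^j,x)$, $b_j=p_v(G^j,x)$ and $c_j=\tfrac1x D_{v\in W}(G^j,x)$, a direct computation with $\Qmat$ gives
\[
\uvec_v(G^1)^T\Qmat\,\uvec_v(G^2)=a_1a_2+a_1b_2+b_1a_2+x\,c_1c_2,
\]
so it suffices to prove that $D(G,x)$ equals this expression.

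The observation that drives everything is a locality statement: because $E^1\cap E^2=\emptyset$ and $V^1\cap V^2=\{v\}$, every vertex $u\ne v$ lies in exactly one side $V^j\setminus\{v\}$, and all edges incident to $u$ lie in $E^j$, so $N_G[u]\subseteq V^j$. Hence whether such a $u$ is dominated by a set $W$ depends only on $W\cap V^j$ and can be tested inside $G^j$; the vertex $v$ is the only one whose domination can involve both sides. I would state this first, as it is exactly what allows the generating functions to factor over the two sides.

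Next I would read off the meaning of the coordinates using Definition \ref{def:gen-dom} together with the locality statement: $a_j$ is the generating function of sets $W^j\subseteq V^j\setminus\{v\}$ that dominate all of $V^j$; $b_j$ counts those $W^j\subseteq V^j\setminus\{v\}$ that dominate $V^j\setminus\{v\}$ but contain no neighbor of $v$, so that $a_j+b_j$ counts all $W^j\subseteq V^j\setminus\{v\}$ dominating $V^j\setminus\{v\}$; and $c_j$ counts the sets $W^j_*\subseteq V^j\setminus\{v\}$ for which $\{v\}\cup W^j_*$ dominates $G^j$, weighted by $x^{|W^j_*|}$. I would then split an arbitrary $W\subseteq V$ by whether $v\in W$ and put $W^j=W\cap V^j$. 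For $v\notin W$, the set $W$ dominates $G$ precisely when both sides dominate $V^j\setminus\{v\}$ and at least one side contains a neighbor of $v$; writing this as ``both sides dominate $V^j\setminus\{v\}$'' minus ``both sides dominate while neither contains a neighbor of $v$'' and factoring over the disjoint vertex sets yields $(a_1+b_1)(a_2+b_2)-b_1b_2=a_1a_2+a_1b_2+b_1a_2$. For $v\in W$, the vertex $v$ is dominated automatically and $W$ dominates $G$ iff $\{v\}\cup W^j_*$ dominates each side; pulling out the single factor $x$ for $v$ gives $x\,c_1c_2$. Summing the two cases reproduces the displayed expression.

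The main obstacle is the $v\notin W$ case: the condition that $v$ be dominated is a disjunction over the two sides and therefore does not factor directly. The inclusion--exclusion step that turns this disjunction into the complement of ``neither side contributes a neighbor of $v$'', together with the verification that each resulting term genuinely factors over $V^1\setminus\{v\}$ and $V^2\setminus\{v\}$, is where the real work lies. I would also be careful to justify that, for sets avoiding $v$, dominating $V^j\setminus\{v\}$ in $G^j$ and in $G^j-v$ are the same condition, so that the identification of $b_j$ with $p_v(G^j,x)$ is correct.
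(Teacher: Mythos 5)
Your proof is correct and follows essentially the same route as the paper's: expand the bilinear form $\uvec_v(G^1)^T\Qmat\,\uvec_v(G^2)$ and match its terms to the classes of dominating sets of $G$ obtained by splitting at $v$, using the fact that domination of every vertex other than $v$ is decided entirely within one side. The only cosmetic difference is that for the $v\notin W$ case you use inclusion--exclusion, $(a_1+b_1)(a_2+b_2)-b_1b_2$, where the paper directly partitions into three cases according to which side(s) dominate $v$; your write-up is considerably more careful than the paper's informal verification (which, incidentally, contains a typo in its displayed expansion of the first two rows of $\Qmat$).
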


\begin{proof}
Expanding equation (\ref{eq:Q12}) we see that matrix $\Qmat$ combines
   all of the different ways that dominating sets can arise in $G$.
   For instance, 
the first two rows of $\Qmat$ give the expression
\[
 D_{v \notin W}(G^1, x) \left( D_{v \notin W}(G^2, x) + 
D_{v \notin N_{G^2}[W]}(G^2-v, x)\right) + 
D_{v \notin W}(G^1, x) D_{v \notin N_{G^2}[W]}(G^2-v, x) 
\]
which corresponds to saying that 
if $v$ is not in a dominating set for $G$ then it must either
be dominated by a vertex in both $G^1$ and $G^2$ or only one of them.
If \(v\) is in $W$ in \(G^1\) it must
  also be in $W$ in \(G^2\) so only the $(3,3)$ entry of $\Qmat$
will be involved. However \(v\) will be
  counted twice and so we will have to multiply by
  \(x\) to get the expression 
\[
 \frac{1}{x} D_{v \in W}(G^1, x) D_{v \in W}(G^2, x) 
\]
as required.
\end{proof}

\begin{Theorem}\label{thm:utodvector}
Let \(G = (V, E)\) be a graph and \(v \in V\). Then
\begin{align*}
  \dvec_v(G)&=\Pmat\Qmat\uvec_v(G)\,.
\end{align*}
\end{Theorem}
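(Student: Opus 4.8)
The plan is to verify the vector identity coordinate by coordinate, exploiting the fact that multiplication by $\Qmat$ already converts $\uvec_v(G)$ into a vector of familiar domination polynomials, so that only $\Pmat$ remains to be interpreted. First I would compute the intermediate product $\Qmat\uvec_v(G)$. Using the definition of $\Qmat$ together with Equation (\ref{eq:u_notin_W}) of Theorem \ref{t:red}, namely $D_{v\notin W}(G,x) + p_v(G,x) = D(G-v,x)$, one checks directly that
\[
\Qmat\,\uvec_v(G) = \begin{pmatrix} D(G-v,x) \\ D_{v\notin W}(G,x) \\ D_{v\in W}(G,x) \end{pmatrix}.
\]
This reduces the theorem to showing $\dvec_v(G) = \Pmat\cdot\bigl(\Qmat\uvec_v(G)\bigr)$ with the right-hand side now expressed through these three polynomials.

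Next I would read off the three coordinates of $\Pmat\bigl(\Qmat\uvec_v(G)\bigr)$ and compare them with the entries of $\dvec_v(G)$. The first coordinate is $D(G-v,x)$, immediate from the top row $(1,0,0)$ of $\Pmat$. The second coordinate is $D_{v\notin W}(G,x) + D_{v\in W}(G,x)$, which equals $D(G,x)$ by the trivial partition of dominating sets according to whether they contain $v$. Both of these are pure bookkeeping.

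The real content lies in the third coordinate, which asserts
\[
D(G+\{v,\cdot\},x) = x\,D(G-v,x) + (1+x)\,D_{v\in W}(G,x).
\]
To prove this I would argue combinatorially on the appended graph $G' = G+\{v,\cdot\}$, whose only new feature is a pendant vertex $v'$ adjacent solely to $v$. I would partition the dominating sets $W'$ of $G'$ according to whether $v'\in W'$. If $v'\notin W'$, then $v'$ must be dominated through the edge $\{v,v'\}$, forcing $v\in W'$, and $W'$ is exactly a dominating set of $G$ containing $v$; these contribute $D_{v\in W}(G,x)$. If $v'\in W'$, then $v$ is already dominated by $v'$, so $W = W'\setminus\{v'\}$ ranges over all subsets of $V$ that dominate $V\setminus\{v\}$; splitting these further by whether $v$ itself happens to be dominated yields the two pieces $D(G,x)$ and $p_v(G,x)$, while the extra vertex $v'$ contributes a factor $x$. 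Hence $D(G',x) = D_{v\in W}(G,x) + x\bigl(D(G,x) + p_v(G,x)\bigr)$.

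The proof concludes by reconciling this expression with the claimed third coordinate. Expanding $x\,D(G-v,x)$ via Equation (\ref{eq:u_notin_W}) and using once more the partition $D(G,x) = D_{v\notin W}(G,x) + D_{v\in W}(G,x)$, both sides collapse to $x\,D(G,x) + x\,p_v(G,x) + D_{v\in W}(G,x)$, completing the verification. The main obstacle is precisely this third coordinate: one must recognize that appending a pendant vertex at $v$ is governed by the quantity counting subsets that dominate all of $V$ except possibly $v$, and that this quantity decomposes cleanly as $D(G,x)+p_v(G,x)$. Once $\Qmat\uvec_v(G)$ has been simplified, the first two coordinates require no further combinatorial input.
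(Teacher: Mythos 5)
Your proof is correct, but it takes a genuinely different route from the paper's for the substantive part of the statement. The paper disposes of the first row directly (as you do, via Equation (\ref{eq:u_notin_W})), but for the second and third rows it invokes Theorem \ref{thm:articulation}: it splits $G$ at $v$ against the gadget graphs $G^1=(\{v\},\emptyset)$ and $G^1=K_2$, observes that $\uvec_v(G^1)^T$ equals $(0,1,1)$ and $(x,0,1+x)$ respectively --- i.e.\ the second and third rows of $\Pmat$ --- and lets the articulation formula $D(G,x)=\uvec_v(G^1)^T\Qmat\uvec_v(G^2)$ do the rest. You instead verify all three coordinates by hand: you first simplify $\Qmat\uvec_v(G)$ to $\bigl(D(G-v,x),\,D_{v\notin W}(G,x),\,D_{v\in W}(G,x)\bigr)^T$, which makes rows one and two immediate, and then prove the pendant-vertex identity $D(G+\{v,\cdot\},x)=D_{v\in W}(G,x)+x\bigl(D(G,x)+p_v(G,x)\bigr)$ by splitting on whether the new vertex $v'$ lies in the dominating set; your decomposition of the subsets dominating $V\setminus\{v\}$ into $D(G,x)+p_v(G,x)$ and the final algebraic reconciliation via Equation (\ref{eq:u_notin_W}) both check out. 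The paper's argument is shorter because it reuses the already-proved splitting machinery, and it makes transparent why the rows of $\Pmat$ are what they are (they are $\uvec_v$-vectors of small graphs); your argument is more self-contained, relying only on Theorem \ref{t:red} and elementary counting, and it supplies an explicit combinatorial interpretation of the third coordinate that the paper leaves implicit.
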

\begin{proof}
  The first row of the stated matrix equation gives
  \begin{align*}
    D(G-v,x)&=D_{v\notin W}(G,x)+D_{v\notin N_G[W]}(G,x)\,,
  \end{align*}
  which is a valid equation, as we are counting all dominating vertex
  sets of the graph $G-v$. 
For the remaining rows we will utilise Theorem~\ref{thm:articulation}.
First, let us consider the splitting 
  $(G^1,G^2,\{v\})$ of $G$ with $G^1=(\{v\},\emptyset)$
  and $G^2=G=(V,E)$ and observe that 
  \begin{align*}
    \uvec_v(G^1)^T=&
    \begin{pmatrix}
      0, & 1, & 1
    \end{pmatrix}\,,
  \end{align*}
  which equals the second row of the matrix $\Pmat$ as required.
  
 Now choose $G^1=(\{v,v'\}, \{\{v,v'\}\})$ with $v'\notin V$
  and $G^2=G=(V,E)$ and, similarly,
  \begin{align*}
    \uvec_v(G^1)^T&=
    \begin{pmatrix}
      x, & 0, & 1+x
    \end{pmatrix}\,,
  \end{align*}
  Theorem~\ref{thm:articulation} again proves that 
the final row of the stated matrix equation is valid.
\end{proof}

\begin{Lemma}\label{lem:dmatvFactorisation}
Under the requirements of Definition \ref{de:art}
  we obtain the matrix
  \begin{align*}
   \Dmat&=\Pmat\Qmat\Pmat^{T} =
    \begin{pmatrix}
      1 & 1 & x \\
      1 & x & 2x+x^2\\
      x & 2x+x^2 & x+3x^2+x^3
    \end{pmatrix}\,,
  \end{align*}
  which is expressible as domination polynomials of paths
  \begin{align*}
        \Dmat&=
    \begin{pmatrix}
      D(P_0,x) & D(P_0,x) & D(P_1,x) \\
      D(P_0,x) & D(P_1,x) & D(P_2,x)\\
      D(P_1,x) & D(P_2,x) & D(P_3,x)
    \end{pmatrix}\,.
  \end{align*}
\end{Lemma}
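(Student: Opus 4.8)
The plan is to treat this lemma as a two-part verification: first establish the matrix identity $\Dmat=\Pmat\Qmat\Pmat^{T}$ by direct computation, and then check that the resulting entries coincide with the tabulated domination polynomials of the paths $P_0,\dots,P_3$. For the first part I would exploit the fact that the product $\Pmat\Qmat$ has effectively already been assembled in Theorem~\ref{thm:utodvector}, where $\dvec_v(G)=\Pmat\Qmat\uvec_v(G)$; carrying out that multiplication gives $\Pmat\Qmat=\begin{pmatrix}1&1&0\\1&0&x\\x&x&x+x^2\end{pmatrix}$. Multiplying on the right by $\Pmat^{T}$, whose columns are exactly the rows of $\Pmat$, then yields the claimed $3\times 3$ matrix after collecting terms. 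This step is routine, so I would not spell out all nine entries in the text.

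For the second part, rather than merely noting a numerical coincidence, I would explain why paths appear at all, which makes the identification with $D(P_i,x)$ transparent. The key observation is that rows two and three of $\Pmat$ are precisely the transposed vectors $\uvec_v(K_1)^{T}=(0,1,1)$ and $\uvec_v(K_2)^{T}=(x,0,1+x)$ computed in the proof of Theorem~\ref{thm:utodvector}. Since $\Dmat_{ij}$ equals (row $i$ of $\Pmat$)$\,\Qmat\,$(row $j$ of $\Pmat$)$^{T}$, the entries of the lower-right $2\times 2$ block are exactly of the form $\uvec_v(G^1)^{T}\Qmat\,\uvec_v(G^2)$ with $G^1,G^2\in\{K_1,K_2\}$. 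By Theorem~\ref{thm:articulation} each such entry is the domination polynomial of the graph obtained by gluing $G^1$ and $G^2$ at $v$: gluing $K_1$ to $K_1$ produces $P_1$, gluing $K_1$ to $K_2$ produces $P_2$, and gluing $K_2$ to $K_2$ produces $P_3$. This accounts for $\Dmat_{22}$, $\Dmat_{23}$, $\Dmat_{33}$ and their symmetric partners directly as $D(P_1,x)$, $D(P_2,x)$, $D(P_3,x)$.

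The first row and column need separate handling, since row one of $\Pmat$, namely $(1,0,0)$, does not arise as the $\uvec$-vector of a graph containing $v$ (it would force $\emptyset$ to be dominating). Here I would simply read off the direct computation, obtaining $\Dmat_{11}=\Dmat_{12}=1$ and $\Dmat_{13}=x$, and match these against $D(P_0,x)=1$ and $D(P_1,x)=x$; these two small values follow at once from enumerating dominating sets, as $P_0$ is empty and dominated vacuously by $\emptyset$, while $P_1=K_1$ has $\{v\}$ as its only dominating set.

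The step I expect to require the most care is the bookkeeping in the matrix product rather than any genuine conceptual difficulty. A convenient safeguard is that $\Qmat$ is symmetric, so $\Pmat\Qmat\Pmat^{T}$ is automatically symmetric; this halves the work, guards against arithmetic slips, and is consistent with the symmetric form asserted for $\Dmat$. The only remaining point to watch is the nonstandard but correct conventions $D(P_0,x)=1$ and $D(P_1,x)=x$, which must be applied consistently when reading the entries off the computed matrix.
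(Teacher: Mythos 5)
Your proposal is correct; note that the paper itself offers no proof of this lemma at all --- it is asserted as a routine matrix computation following Definition \ref{de:art} --- so your direct verification of $\Pmat\Qmat\Pmat^{T}$ (including the intermediate product $\Pmat\Qmat$, which I have checked and which is right) already does everything the paper implicitly relies on. Your second part goes beyond the paper in a genuinely useful way: by recognizing rows two and three of $\Pmat$ as $\uvec_v(K_1)^{T}=(0,1,1)$ and $\uvec_v(K_2)^{T}=(x,0,1+x)$ from the proof of Theorem~\ref{thm:utodvector}, and invoking Theorem~\ref{thm:articulation}, you explain \emph{why} the lower-right $2\times 2$ block consists of $D(P_1,x)$, $D(P_2,x)$, $D(P_3,x)$ (the gluings $K_1\cup K_1$, $K_1\cup K_2$, $K_2\cup K_2$ at $v$), rather than merely observing the coincidence of polynomials; the paper gives no such explanation. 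You are also right to treat the first row and column separately, since $(1,0,0)$ is not the $\uvec_v$-vector of any graph containing $v$; one could alternatively read those entries off the first row of Theorem~\ref{thm:utodvector} as $D(K_1-v,x)=D(P_0,x)$ and $D(K_2-v,x)=D(P_1,x)$, which would make the path interpretation uniform, but your direct evaluation is equally valid. No gaps.
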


\begin{Theorem}
  Let $G=(V,E)$ be a graph with splitting $(G^1,G^2,\{v\})$ and
  articulation $v\in V$.  Then the domination polynomial of $G$ can be expressed
as follows:
  \begin{align*}
    D(G,x)&=\dvec_v(G^1)^T\Dmat^{-1}\dvec_v(G^2)\,.
  \end{align*}
\end{Theorem}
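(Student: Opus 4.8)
The plan is to obtain the formula purely by linear algebra from the three facts already at hand: Theorem~\ref{thm:articulation}, which gives $D(G,x)=\uvec_v(G^1)^T\Qmat\uvec_v(G^2)$; Theorem~\ref{thm:utodvector}, which gives $\dvec_v(G)=\Pmat\Qmat\uvec_v(G)$; and Lemma~\ref{lem:dmatvFactorisation}, which factors $\Dmat=\Pmat\Qmat\Pmat^{T}$. The first thing I would record is that everything takes place over the field $\mathbb{R}(x)$ of rational functions and that all matrices in sight are invertible there: a direct expansion yields $\det\Pmat=1+x$ and $\det\Qmat=-x$, hence $\det\Dmat=(\det\Pmat)^2\det\Qmat=-x(1+x)^2$, all nonzero. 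In particular $\Dmat^{-1}$ exists, so the right-hand side of the claimed identity is well-defined, and the product $\Amat:=\Pmat\Qmat$ is invertible.

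Next I would invert Theorem~\ref{thm:utodvector}. Applying it to $G^1$ and to $G^2$ and solving for the $\uvec$-vectors gives $\uvec_v(G^i)=\Amat^{-1}\dvec_v(G^i)$ for $i=1,2$. Substituting these into Theorem~\ref{thm:articulation} produces
\begin{align*}
D(G,x)
&=\bigl(\Amat^{-1}\dvec_v(G^1)\bigr)^{T}\Qmat\bigl(\Amat^{-1}\dvec_v(G^2)\bigr)\\
&=\dvec_v(G^1)^{T}\,(\Amat^{T})^{-1}\,\Qmat\,\Amat^{-1}\,\dvec_v(G^2)\,.
\end{align*}
At this point the whole theorem reduces to the single matrix identity $(\Amat^{T})^{-1}\Qmat\,\Amat^{-1}=\Dmat^{-1}$.

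The remaining step verifies this identity, and the crucial structural input is that $\Qmat$ is \emph{symmetric}, i.e.\ $\Qmat^{T}=\Qmat$. From this, $\Amat^{T}=(\Pmat\Qmat)^{T}=\Qmat\Pmat^{T}$, so $(\Amat^{T})^{-1}=(\Pmat^{T})^{-1}\Qmat^{-1}$, and the middle factor collapses:
\begin{align*}
(\Amat^{T})^{-1}\Qmat\,\Amat^{-1}
=(\Pmat^{T})^{-1}\Qmat^{-1}\Qmat\,\Amat^{-1}
=(\Pmat^{T})^{-1}(\Pmat\Qmat)^{-1}
=(\Pmat\Qmat\Pmat^{T})^{-1}
=\Dmat^{-1}\,,
\end{align*}
using Lemma~\ref{lem:dmatvFactorisation} in the last equality. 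Chaining this back through the previous display gives the stated formula. I do not anticipate a genuine obstacle here, as the argument is essentially bookkeeping; the two points that require care are ensuring one works over $\mathbb{R}(x)$ so that all inverses are legitimate (guaranteed by the nonvanishing determinants above), and exploiting the symmetry of $\Qmat$, without which the clean cancellation of $\Qmat^{-1}\Qmat$ would not occur. I would therefore flag the symmetry of $\Qmat$ as the key observation that makes the factorization of Lemma~\ref{lem:dmatvFactorisation} do exactly the work needed.
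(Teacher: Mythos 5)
Your proof is correct and follows essentially the same route as the paper: invert Theorem~\ref{thm:utodvector} to express $\uvec_v(G^i)$ in terms of $\dvec_v(G^i)$, substitute into Theorem~\ref{thm:articulation}, and collapse the resulting matrix product to $\Dmat^{-1}$ via the factorization $\Dmat=\Pmat\Qmat\Pmat^{T}$. You merely make explicit two points the paper leaves implicit, namely the nonvanishing determinants guaranteeing invertibility over $\mathbb{R}(x)$ and the symmetry of $\Qmat$ needed for the cancellation, both of which check out.
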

\begin{proof}
  By Theorem~\ref{thm:articulation} we have
  \begin{align}\label{eq:proofFirst}
    D(G,x)&=\uvec_v(G^1)^T\Qmat\uvec_v(G^2)
  \end{align}
  and Theorem~\ref{thm:utodvector} gives
  \begin{align}\label{eq:proofSecond}
    \uvec_v(G^i)&=\Qmat^{-1}\Pmat^{-1}\dvec_v(G^i) 
  \end{align}
  for $i=1,2$. Substitution of Equation~\eqref{eq:proofSecond} into
  Equation~\eqref{eq:proofFirst} gives the result.
\end{proof}

Another recurrence relation for graphs with an articulation can be obtained
using Theorem \ref{t:red}. 
This recurrence relation uses only the graphs $G^1$ and $G^2$ and their subgraphs. 
\begin{theorem}
Let $G=(V,E)$ be a graph with splitting $(G^1,G^2,\{v\})$ and articulation $v\in V$. Then
\begin{eqnarray*}
D(G,x)  &=&  x D(G/v,x) 
+ D(G^{1}-v,x)\cdot D(G^{2}-v,x)\\
& &  + x D(G^{1}-N[v],x)\cdot D(G^{2}-N[v],x) \\
 &&- 
\frac{1}{1+x}
\left[\left(
x D(G^{1}/v,x) + D(G^{1}-v,x)  + x D(G^1-N[v],x) - D(G^1,x) \right) 
\right.
\\ &&
\left. \cdot \left(
 x D(G^{2}/v,x) + D(G^1-v,x)  + x D(G^2-N[v],x) - D(G^2,x)
\right) \right]\,.
\end{eqnarray*}

\end{theorem}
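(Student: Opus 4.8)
The plan is to apply Theorem~\ref{t:red} to $G$ at the articulation $v$ and then rewrite each resulting term using the splitting $(G^1,G^2,\{v\})$. Applying the theorem gives
\[
D(G,x) = x D(G/v,x) + D(G-v,x) + x D(G-N[v],x) - (1+x)\p{v}(G,x),
\]
so it suffices to express $D(G-v,x)$, $D(G-N[v],x)$ and $\p{v}(G,x)$ as products of quantities attached separately to $G^1$ and $G^2$, while leaving $x D(G/v,x)$ untouched.

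The first structural fact I would establish is that deleting $v$ severs $G$ into a disjoint union. From the definition of a splitting we have $(V^1\setminus\{v\})\cap(V^2\setminus\{v\})=\emptyset$, and any edge of $G$ not incident to $v$ lies in exactly one of $E^1,E^2$ and hence joins two vertices of the same side; therefore $G-v$ is the disjoint union $(G^1-v)\cup(G^2-v)$. The same reasoning, carried out after also removing $N_G(v)=N_{G^1}(v)\cup N_{G^2}(v)$ (a disjoint union, since a neighbour of $v$ lies in only one side), shows $G-N[v]=(G^1-N[v])\cup(G^2-N[v])$. By multiplicativity of $D$ with respect to components these yield $D(G-v,x)=D(G^1-v,x)\,D(G^2-v,x)$ and $D(G-N[v],x)=D(G^1-N[v],x)\,D(G^2-N[v],x)$, matching the second and third terms of the claim.

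The step I expect to be the crux is handling $\p{v}(G,x)$. I would argue that it factorises as well: a set $W\subseteq V\setminus\{v\}$ counted by $\p{v}(G,x)$ is a dominating set of the disjoint union $G-v$ that avoids $N_G(v)$, so writing $W=W_1\cup W_2$ with $W_i\subseteq V^i\setminus\{v\}$, the set $W$ dominates $G-v$ and avoids $N_G(v)$ if and only if each $W_i$ dominates $G^i-v$ and avoids $N_{G^i}(v)$; hence $\p{v}(G,x)=\p{v}(G^1,x)\,\p{v}(G^2,x)$. To eliminate the auxiliary polynomials $\p{v}(G^i,x)$ in favour of the ordinary domination polynomials in the statement, I would apply Theorem~\ref{t:red} to each $G^i$ at $v$ and solve for the $\p{v}$ term, obtaining
\[
(1+x)\p{v}(G^i,x)=x D(G^i/v,x)+D(G^i-v,x)+x D(G^i-N[v],x)-D(G^i,x).
\]

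Substituting the three factorisations into the expansion of $D(G,x)$, the last term becomes
\[
-(1+x)\p{v}(G^1,x)\p{v}(G^2,x)=-\frac{1}{1+x}\,\bigl[(1+x)\p{v}(G^1,x)\bigr]\bigl[(1+x)\p{v}(G^2,x)\bigr],
\]
and inserting the two expressions above produces exactly the bracketed product over $1+x$ in the claim. The only point requiring care is that the contraction term does \emph{not} split: because the neighbours of $v$ lie on both sides of the splitting, $G/v$ introduces edges between $V^1$ and $V^2$ and is not a disjoint union, which is precisely why $x D(G/v,x)$ survives unreduced. Everything else is bookkeeping once the three multiplicative identities are in place.
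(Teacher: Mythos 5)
Your proposal is correct and follows essentially the same route as the paper's own proof: apply Theorem~\ref{t:red} to $G$, $G^1$ and $G^2$ at $v$, use multiplicativity of $D$ over the disjoint unions $G-v=(G^1-v)\cup(G^2-v)$ and $G-N[v]=(G^1-N[v])\cup(G^2-N[v])$, factorise $\p{v}(G,x)=\p{v}(G^1,x)\,\p{v}(G^2,x)$, and rearrange. (Your derivation also implicitly corrects the apparent typo in the stated second factor, where $D(G^1-v,x)$ should read $D(G^2-v,x)$.)
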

\begin{proof}
By Theorem \ref{t:red} we have, for the graphs $G$, $G^1$ and $G^2$ and vertex $v$: 
\begin{eqnarray}\label{1conn}
(1+x) \p{v}(G,x) &=&  x D(G/v,x) + D(G-v,x)  + x D(G-N[v],x) - D(G,x)  \\
(1+x) \p{v}(G^1,x) &=&  x D(G^1/v,x) + D(G^1-v,x)  + x D(G^1- N[v],x) - D(G^1,x)  \notag \\  
(1+x) \p{v}(G^2,x) &=&  x D(G^2/v,x) + D(G^2-v,x)  + x D(G^2- N[v],x) - D(G^2,x)\,.  \notag
\end{eqnarray}

$\p{v}(G,x)$ counts exactly those dominating sets of $G-N[v]$ 
which dominate $N(v)$. Since $v$ is an articulation, each such set must be
a disjoint union of dominating sets 
for $G^{i}$ which each dominates $N(v) \cap G^{i} = N_{G^i}(v)$ for $i=1,2$.
 Hence
\begin{equation}\label{e:1conn}
\p{v}(G,x) = \p{v}(G^1,x) \cdot \p{v}(G^2,x)\,.
\end{equation}

The graphs $G-v$ (respectively $G-N[v]$) are the disjoint union of $G^1-v$ and $G^2-v$ (respectively $G^1-N[v]$ and $G^2-N[v]$). 
Hence, 
\begin{eqnarray} \label{e:1conn_m}
D(G-v,x) &=& D(G^1-v,x) \cdot D(G^2-v,x) \\ 
D(G-N[v],x) &=& D(G^1-N[v],x) \cdot D(G^2-N[v],x)\,. \notag
\end{eqnarray}
From Equations (\ref{1conn}), (\ref{e:1conn}) and (\ref{e:1conn_m}) we get the desired result by 
rearranging the terms. 
\end{proof}

\section{Arbitrary separating vertex sets\label{sec:arbitr-separ-vert}\label{se:sep}}
We will now extend the results in Section~\ref{sec:articulations} from
articulations to arbitrary separating vertex sets. 
Theorem \ref{thm:symmDomSplittingFormula}, 
the Splitting Formula Theorem, will then be applied to a simple separating 
set of size $2$ in Subsection \ref{subse:application}. 

\begin{Definition}
  Let $S$ be a finite set. We denote by $\Mat(S,S)$ the set of all
  square matrices with $|S|$ rows and columns, where the rows and
  columns are indexed by the elements of $S$. Similarly, we denote by
  $\Mat(S)$ the set of all vectors with $|S|$ rows indexed by the
  elements in $S$.
\end{Definition}

\begin{Definition}
  Let $S^1,S^2$ be finite sets. Given two matrices $\Umat\in\Mat(S^1,S^1)$ and $\Vmat\in\Mat(S^2,S^2)$ with
  \begin{align*}
    \Umat&=(u(x,y))_{x,y\in S^1} \\
    \Vmat&=(v(x,y))_{x,y\in S^2}
  \end{align*}
  we define the Kronecker product
  \begin{align*}
    \Umat\otimes \Vmat&=(u(x,y)v(r,s))_{(x,r)\in S^1\times S^2, (y,s)\in S^1\times S^2}
  \end{align*}
  of $\Umat$ and $\Vmat$ being an element in $\Mat(S^1\times S^2,S^1\times S^2)$.
\end{Definition}

\begin{Proposition}[see~\cite{marcus1992}]\label{prop:kroneckerInverse}
  Let $U,V$ be finite sets and $\Umat\in\Mat(U,U)$, $\Vmat\in\Mat(V,V)$ be invertible matrices.
  We have that
  \begin{align}
    (\Umat\otimes \Vmat)^{-1}&=\Umat^{-1}\otimes \Vmat^{-1}\,.
  \end{align}
\end{Proposition}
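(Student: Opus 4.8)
The plan is to prove the statement by verifying directly that $\Umat^{-1}\otimes\Vmat^{-1}$ is a two-sided inverse of $\Umat\otimes\Vmat$; since a matrix has at most one inverse, this immediately yields the claimed identity. The one structural fact I will need is the \emph{mixed-product property} of the Kronecker product, namely that for matrices $\Amat,\Amat'\in\Mat(U,U)$ and $\Bmat,\Bmat'\in\Mat(V,V)$ one has $(\Amat\otimes\Bmat)(\Amat'\otimes\Bmat')=(\Amat\Amat')\otimes(\Bmat\Bmat')$. So the first step is to establish this property; the rest is a one-line substitution.

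To prove the mixed-product property I would argue entrywise, directly from the definition of $\otimes$ given above. Fix indices $a,b\in U$ and $c,d\in V$, so that $(a,c)$ and $(b,d)$ range over $U\times V$. Ordinary matrix multiplication in $\Mat(U\times V,U\times V)$ sums over the full product index set, so the $((a,c),(b,d))$ entry of the left-hand side is
\[
\sum_{e\in U}\sum_{f\in V}(\Amat)_{a,e}(\Bmat)_{c,f}\,(\Amat')_{e,b}(\Bmat')_{f,d}\,,
\]
where I have already substituted the defining formula $(\Amat\otimes\Bmat)_{(a,c),(e,f)}=(\Amat)_{a,e}(\Bmat)_{c,f}$. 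The key observation is that this double sum separates as a product of two single sums,
\[
\Bigl(\textstyle\sum_{e\in U}(\Amat)_{a,e}(\Amat')_{e,b}\Bigr)\Bigl(\textstyle\sum_{f\in V}(\Bmat)_{c,f}(\Bmat')_{f,d}\Bigr)=(\Amat\Amat')_{a,b}\,(\Bmat\Bmat')_{c,d}\,,
\]
which is exactly the $((a,c),(b,d))$ entry of $(\Amat\Amat')\otimes(\Bmat\Bmat')$. As $a,b,c,d$ were arbitrary, the two matrices agree.

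With this in hand the conclusion is immediate: applying the mixed-product property with $\Amat=\Umat^{-1}$, $\Amat'=\Umat$, $\Bmat=\Vmat^{-1}$, $\Bmat'=\Vmat$ gives $(\Umat^{-1}\otimes\Vmat^{-1})(\Umat\otimes\Vmat)=(\Umat^{-1}\Umat)\otimes(\Vmat^{-1}\Vmat)=I_U\otimes I_V$, and the Kronecker product of identities is the identity of $\Mat(U\times V,U\times V)$, since its $((a,c),(b,d))$ entry is $\delta_{a,b}\delta_{c,d}$. The same computation in the opposite order gives $(\Umat\otimes\Vmat)(\Umat^{-1}\otimes\Vmat^{-1})=I_U\otimes I_V$, so $\Umat^{-1}\otimes\Vmat^{-1}$ is a genuine two-sided inverse and the proposition follows. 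There is no deep obstacle here; the only point requiring care is the index bookkeeping, specifically recognizing that multiplication of Kronecker products sums over the product index set $U\times V$ and that this double sum factors cleanly into the two separate matrix products.
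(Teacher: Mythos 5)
Your proof is correct. Note, however, that the paper does not actually prove this proposition: it is stated with the attribution ``see \cite{marcus1992}'' and taken as a known fact from Marcus and Minc's survey of matrix theory, so there is no in-paper argument to compare against. What you have supplied is the standard self-contained derivation: an entrywise verification of the mixed-product property $(\Amat\otimes\Bmat)(\Amat'\otimes\Bmat')=(\Amat\Amat')\otimes(\Bmat\Bmat')$, followed by the one-line observation that $\Umat^{-1}\otimes\Vmat^{-1}$ is then a two-sided inverse of $\Umat\otimes\Vmat$. Your index bookkeeping correctly matches the paper's set-indexed definition of $\otimes$ (the $((a,c),(e,f))$ entry of $\Amat\otimes\Bmat$ is $(\Amat)_{a,e}(\Bmat)_{c,f}$, and multiplication in $\Mat(U\times V,U\times V)$ sums over $U\times V$), and the factorization of the double sum is the right key step. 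The only thing your write-up buys beyond the paper is self-containedness; there is no substantive difference in mathematical content, since this is the proof the cited reference gives as well.
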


  We will denote by $2^X$ the power set of a given set $X$.
\begin{Definition}
  Let $G$ be a graph with splitting $(G^1,G^2,X)$. We define the set $R(X)$ as follows:
  \begin{align*}
    R(X)&=\{(A_X,B_X)\in 2^X\times 2^X:A_X\subseteq B_X\}\,,
  \end{align*}
\end{Definition}
  
Note that $R(X)$ has
$3^{|X|}$ elements and for all $v\in X$ the three-element set
  \begin{align*}
    R(\{v\})&=\{(A_v,B_v)\in 2^{\{v\}}\times 2^{\{v\}}:A_v\subseteq B_v\} \\
    &=\{(\varnothing,\varnothing),(\varnothing,\{v\}),(\{v\},\{v\})\}\,.
  \end{align*}

  In the following section we will use the bijection $f$ which has these
two properties
  when dealing with Kronecker products of matrices in $\Mat(R(\{v\}),R(\{v\}))$ for $v\in X$. 
  \begin{align*}
     \prod_{v\in X} R(\{v\})\mapsto R(X) ~~,~~~~
    \prod_{v\in X}(A_v,B_v)\mapsto \left(\bigcup_{v\in X} A_v, \bigcup_{v\in X} B_v\right)\,,
  \end{align*}
  Assume now that $U$ and $V$ are finite and disjoint sets.
  Under this restriction, if 
$\Amat$ and $\Bmat$ have rows and columns indexed by $R(U)$ and $R(V)$,
  then $\Amat\otimes \Bmat$ has rows and columns indexed by $R(U\cup V)$, following the above bijection.

  Finally, for every vertex $v$, the rows and columns of the 
matrices 
      \(
\Pmat_v,\Qmat_v,\Dmat_v\in\Mat(R(\{v\}),R(\{v\}))
\)
defined in an analogous way to the matrices in Section \ref{se:art}
will always be given with this specific ordering of the elements in $R(\{v\})$: 
  \begin{align*}
    (\varnothing, \{v\}), (\varnothing, \varnothing), (\{v\},\{v\}).
  \end{align*}
For example we have in the third row
  and column of matrix $\Pmat_v$, from Section~\ref{sec:articulations}, the entry $1+x$, that is
  corresponding to the row and the column indexed by $(\{v\},\{v\})$.

\begin{Definition}
  Let $(G^1,G^2,X)$ be a splitting of $G$ with $G^i=(V^i,E^i)$. We define the Kronecker products
\[
    \Pmat_X=\bigotimes_{v\in X} \Pmat_v ~,~~~   \Qmat_X=\bigotimes_{v\in X} \Qmat_v ~,~~~      \Dmat_X=\bigotimes_{v\in X} \Dmat_v\,.
\]
\end{Definition}

\begin{Proposition}\label{prop:matrixInvertible}
  The matrices $\Pmat_X$ and $\Qmat_X$ are invertible.
\end{Proposition}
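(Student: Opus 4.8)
The plan is to reduce the claim to the single-vertex case and then propagate invertibility through the Kronecker product by means of Proposition~\ref{prop:kroneckerInverse}. Since every entry of the matrices involved is a polynomial in $x$, I would work over the field $\mathbb{R}(x)$ of rational functions, where a square matrix is invertible precisely when its determinant is a nonzero element of $\mathbb{R}(x)$, that is, a nonzero polynomial. All assertions of invertibility below are meant in this sense.

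First I would settle the base case $|X|=1$. For each $v\in X$ the matrix $\Pmat_v$ is the matrix $\Pmat$ of Definition~\ref{de:art} with its rows and columns indexed by $R(\{v\})$ in the fixed ordering of Section~\ref{se:sep}, and likewise $\Qmat_v$ arises from $\Qmat$; any discrepancy between this ordering and the one used in Section~\ref{se:art} amounts to a simultaneous permutation of rows and columns, which alters the determinant only by a factor of $\pm1$. It therefore suffices to compute $\det\Pmat$ and $\det\Qmat$. A short cofactor expansion gives $\det\Pmat = 1+x$ and $\det\Qmat = -x$, both nonzero polynomials, so each $\Pmat_v$ and each $\Qmat_v$ is invertible over $\mathbb{R}(x)$.

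Next I would treat an arbitrary splitting set by induction on $|X|$. Writing $X=\{v\}\cup X'$ with $v\notin X'$, the associativity of the Kronecker product together with the index-set bijection $f$ identifying $R(\{v\})\times R(X')$ with $R(X)$ yields the factorizations $\Pmat_X=\Pmat_v\otimes\Pmat_{X'}$ and $\Qmat_X=\Qmat_v\otimes\Qmat_{X'}$. By the induction hypothesis $\Pmat_{X'}$ and $\Qmat_{X'}$ are invertible, and by the base case so are $\Pmat_v$ and $\Qmat_v$; Proposition~\ref{prop:kroneckerInverse} then shows that the Kronecker products are invertible, with $(\Pmat_X)^{-1}=(\Pmat_v)^{-1}\otimes(\Pmat_{X'})^{-1}$ and the analogous identity for $\Qmat_X$. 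This closes the induction and establishes the proposition.

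I do not anticipate any serious obstacle: the substance of the argument is the pair of nonvanishing determinants $1+x$ and $-x$, and the remaining effort is purely bookkeeping. The only points demanding care are confirming that the prescribed ordering of $R(\{v\})$ does not affect invertibility (it does not, being a permutation) and checking that the index-set bijection $f$ legitimizes the factorization $\Pmat_X=\Pmat_v\otimes\Pmat_{X'}$, so that the two-factor statement of Proposition~\ref{prop:kroneckerInverse} may be iterated across all of $X$.
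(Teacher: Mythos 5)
Your proof is correct and follows essentially the same route as the paper's: both reduce to the nonvanishing of $\det(\Pmat_v)$ and $\det(\Qmat_v)$ and then invoke Proposition~\ref{prop:kroneckerInverse} to invert the Kronecker product. You merely make explicit what the paper leaves implicit — the actual determinant values $1+x$ and $-x$ (both correct) and the induction needed to iterate the two-factor Kronecker statement over all of $X$.
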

\begin{proof}
  For every $v\in X$ the matrices $\Pmat_v$ and $\Qmat_v$ are
  invertible, as $\det(\Pmat_v)\ne 0$ and $\det(\Qmat_v)\ne 0$.  Furthermore we
  conclude by Proposition~\ref{prop:kroneckerInverse}
\[
    \Pmat^{-1}_X=\bigotimes_{v\in X} \Pmat_{v}^{-1} ~,~~~
    \Qmat^{-1}_X=\bigotimes_{v\in X}\Qmat_v^{-1}~,~~~
\]
  which proves the claim.
\end{proof}

\begin{Proposition}\label{prop:DmatrixFactorisation}
  We have
  \begin{align}
    \Dmat_X&=\Pmat_X\Qmat_X\Pmat^T_X.
  \end{align}
\end{Proposition}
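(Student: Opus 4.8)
The goal is to establish $\Dmat_X=\Pmat_X\Qmat_X\Pmat^T_X$ for an arbitrary separating set $X$, given the single-vertex identity from Lemma~\ref{lem:dmatvFactorisation}, namely $\Dmat_v=\Pmat_v\Qmat_v\Pmat_v^T$ for each $v\in X$. The plan is to reduce the general statement to the single-vertex case by exploiting how Kronecker products interact with matrix multiplication and transposition.

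The key algebraic fact I would invoke is the \emph{mixed-product property} of the Kronecker product: for conformable matrices, $(\Amat\otimes\Bmat)(\Cmat\otimes\Dmat)=(\Amat\Cmat)\otimes(\Bmat\Dmat)$. This is the natural companion to Proposition~\ref{prop:kroneckerInverse} (which handles inverses) and is standard; I would cite~\cite{marcus1992} for it in the same way the inverse property is cited. Applying this property repeatedly, together with the fact that transposition distributes over the Kronecker product, $(\Amat\otimes\Bmat)^T=\Amat^T\otimes\Bmat^T$, lets me rewrite the right-hand side factor by factor.

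Concretely, I would argue as follows. Starting from the definitions $\Pmat_X=\bigotimes_{v\in X}\Pmat_v$, $\Qmat_X=\bigotimes_{v\in X}\Qmat_v$, and applying transposition termwise,
\begin{align*}
\Pmat_X\Qmat_X\Pmat_X^T
&=\Bigl(\bigotimes_{v\in X}\Pmat_v\Bigr)\Bigl(\bigotimes_{v\in X}\Qmat_v\Bigr)\Bigl(\bigotimes_{v\in X}\Pmat_v^T\Bigr).
\end{align*}
Two successive applications of the mixed-product property collapse each of the three Kronecker factors into a single Kronecker product of the pointwise matrix products, yielding
\begin{align*}
\Pmat_X\Qmat_X\Pmat_X^T
&=\bigotimes_{v\in X}\bigl(\Pmat_v\Qmat_v\Pmat_v^T\bigr)
=\bigotimes_{v\in X}\Dmat_v
=\Dmat_X,
\end{align*}
where the middle equality is exactly Lemma~\ref{lem:dmatvFactorisation} applied at each vertex and the last is the definition of $\Dmat_X$. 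The only bookkeeping subtlety is that the mixed-product property requires the index sets to line up correctly across the three factors; here every factor is indexed by the same product $\prod_{v\in X}R(\{v\})$ via the bijection $f$ fixed earlier, so the blocks align and the property applies without reindexing.

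The main obstacle is not depth but care: I must make sure the mixed-product property is stated for the index-set conventions of this paper (rows and columns indexed by $R(\{v\})$ with the fixed ordering, combined through $f$ into $R(X)$), so that the Kronecker factors genuinely compose in the claimed order, and that transposition respects this ordering. Once the mixed-product property is granted in that form, the proof is a one-line telescoping of Kronecker products, with no case analysis or graph-theoretic content required.
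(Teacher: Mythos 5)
Your proof is correct and follows essentially the same route as the paper's: both rewrite $\Pmat_X\Qmat_X\Pmat_X^T$ as a product of Kronecker products, collapse it via the mixed-product property into $\bigotimes_{v\in X}(\Pmat_v\Qmat_v\Pmat_v^T)$, and then apply Lemma~\ref{lem:dmatvFactorisation} vertexwise. Your version is merely more explicit about transposition distributing over the Kronecker product and about the index-set bookkeeping, which the paper leaves implicit under the label ``Property of the Kronecker product.''
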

\begin{proof}
  Observe that we have the following chain of equalities
\begin{align*}
    \Pmat_X \Qmat_X \Pmat^{T}_X&=\bigotimes_{v\in X}\Pmat_v \cdot \bigotimes_{v\in X} \Qmat_v\cdot \bigotimes_{v\in X}\Pmat_v^T & \tag{Definition}\\
    &=\bigotimes_{v\in X}(\Pmat_v\Qmat_v\Pmat_v^T) & \tag{Property of the Kronecker product}\\
    &=\bigotimes_{v\in X}\Dmat_v & \tag{Lemma~\ref{lem:dmatvFactorisation}} \\
    &=\Dmat_X\,.&   \tag*\qedhere
  \end{align*}

\end{proof}

\begin{Definition}\label{def:statePolynomialsDominationPolynomial}
  Let $G=(V,E)$ be a graph with splitting $(G^1,G^2,X)$ with $G^i=(V^i,E^i)$. 
We define 
  \begin{align*}
    D(G^i,(A,B),x)&=\sum_{W^i\subseteq V^i} [W^i\cap X=A, N^i\cap X=B][N^i\setminus
    X=V^i\setminus X] x^{|W^i\setminus X|}
  \end{align*}
  for all $(A,B)\in R(X)$ with $N^i:=N_{G^i}[W^i]$ and $i=1,2$. Furthermore
  we  define the vectors $\uvec_X(G^i)\in\Mat(R(X))$ by
  \begin{align}
    \uvec_X(G^i)&=(D(G^i,(A,B),x))_{(A,B)\in R(X)}
  \end{align}
  for $i=1,2$.
\end{Definition}

\begin{Remark}
  The reader should observe that the vector $\uvec_X(G^i)$ is the direct generalization of the 
  vector $\uvec_v(G^i)$ in Section~\ref{sec:articulations}. Indeed, 
  for an articulation $\{v\}$ as a singleton vertex separating set $X=\{v\}$,
we obtain the following three identities
  \begin{align*}
    &D(G,(\emptyset,\emptyset),x)=D_{v\not\in N_G[W]}(G-v,x)\,, \\
    &D(G,(\emptyset,\{v\}),x)=D_{v\not\in W}(G,x)\,, \\
    &D(G,(\{v\},\{v\}),x)=D_{v\in W}(G,x)/x\,.
  \end{align*}
  In other words we have $\uvec_{\{v\}}(G^i)=\uvec_v(G^i)$.
\end{Remark}

\begin{Lemma}\label{lem:glueingLemmaDomination} 
  Let $G=(V,E)$ be a graph with splitting $(G^1,G^2,X)$, with
  $G^i=(V^i,E^i)$ for $i=1,2$. We have, for all $W\subseteq V$,
  \begin{align*}
    |W|&=|W^1\setminus X|+|(W^1\cap X) \cup (W^2\cap X)|+|W^2\setminus X| 
  \end{align*}
  and
  \begin{align*}
    [N_G(W)=V]=
    [N^1\setminus X=V^1\setminus X][(N^1\cup N^2)\cap X=X][N^2\setminus X=V^2\setminus X]
  \end{align*}
  where $W^i:=W\cap V^i$ and $N^i:=N_{G^i}[W^i]$ for $i=1,2$.
\end{Lemma}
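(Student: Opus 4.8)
The plan is to reduce both identities to the single structural observation that the three sets $V^1\setminus X$, $X$, and $V^2\setminus X$ form a partition of $V$, together with the fact that the neighborhood of any vertex outside $X$ is confined to one of the two subgraphs. I would state these preliminary facts first and then treat the two equations separately.

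For the cardinality identity, I would first note that $X\subseteq V^1$ and $X\subseteq V^2$, whence $W^1\cap X=(W\cap V^1)\cap X=W\cap X$ and similarly $W^2\cap X=W\cap X$; in particular $(W^1\cap X)\cup(W^2\cap X)=W\cap X$. Intersecting $W$ with the three pairwise disjoint parts of $V$ gives the disjoint decomposition $W=(W^1\setminus X)\sqcup(W\cap X)\sqcup(W^2\setminus X)$, where $W^i\setminus X=W\cap(V^i\setminus X)$. Adding cardinalities and substituting $|W\cap X|=|(W^1\cap X)\cup(W^2\cap X)|$ yields the first equation directly.

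For the domination identity I would argue that the condition $N_G[W]=V$ holds if and only if each of the three parts of the partition is entirely dominated, and then localize the domination of each part. The key preliminary step is that for $u\in V^1\setminus X$ every edge of $G$ incident to $u$ lies in $E^1$: since $u\notin V^2$ and the edges of $E^2$ join only vertices of $V^2$, while $E=E^1\cup E^2$, any such edge must be in $E^1$. Hence $N_G[u]=N_{G^1}[u]\subseteq V^1$, so $u\in N_G[W]$ iff $N_{G^1}[u]\cap W^1\neq\varnothing$, that is, iff $u\in N^1$. Thus all of $V^1\setminus X$ is dominated iff $V^1\setminus X\subseteq N^1$, and since $N^1\subseteq V^1$ forces $N^1\setminus X\subseteq V^1\setminus X$, this is equivalent to $N^1\setminus X=V^1\setminus X$; the symmetric argument handles $V^2\setminus X$. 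For a separating vertex $v\in X$ both subgraphs may contribute edges, so $N_G[v]=N_{G^1}[v]\cup N_{G^2}[v]$ and $v\in N_G[W]$ iff $v\in N^1$ or $v\in N^2$; hence all of $X$ is dominated iff $X\subseteq N^1\cup N^2$, i.e.\ $(N^1\cup N^2)\cap X=X$. Multiplying the three resulting indicators gives the claimed factorization.

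I expect the main obstacle to be the edge-localization step, where one must carefully invoke that each $G^i$ is a subgraph supported on $V^i$ with $E=E^1\cup E^2$ in order to obtain $N_G[u]=N_{G^i}[u]$ for $u\in V^i\setminus X$ and the union formula $N_G[v]=N_{G^1}[v]\cup N_{G^2}[v]$ for $v\in X$; everything after that is elementary set bookkeeping. It is worth noting that the disjointness hypothesis $E^1\cap E^2=\varnothing$ is not actually needed for either identity, as only $E=E^1\cup E^2$ and the vertex partition are used.
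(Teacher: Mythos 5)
Your proof is correct and follows essentially the same route as the paper's: partition $V$ into $V^1\setminus X$, $X$, and $V^2\setminus X$, observe that $W$ and $N_G[W]$ decompose accordingly (the latter because neighborhoods of vertices outside $X$ are confined to one side), and read off both identities. The paper compresses this into two displayed decompositions and declares the rest obvious; you merely spell out the edge-localization step it leaves implicit, and your closing remark that $E^1\cap E^2=\varnothing$ is not needed here is accurate.
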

\begin{proof}
  The first claim follows directly by partitioning of $W$ into three disjoint sets
  \begin{align*}
    W&=(W^1\setminus X)\cup ((W^1\cap X)\cup (W^2\cap X)) \cup (W^2\setminus X)\,.
  \end{align*}
  Similarly we obtain 
  \begin{align*}
    N_G[W]&= (N^1\setminus X) \cup ((N^1\cap X)\cup (N^2\cap X)) \cup (N^2\setminus X)\,,
  \end{align*}
  which obviously implies the second claim.
\end{proof}

We will now give the generalization of 
Theorem~\ref{thm:articulation} to arbitrary separating vertex sets.
\begin{Theorem}
\label{thm:splittingUvector}
  Let $(G^1,G^2,X)$ be a splitting of $G$. We have
  \begin{align}
    D(G,x)&=\uvec_X(G^1)^T\Qmat_X\uvec_X(G^2).
  \end{align}
\end{Theorem}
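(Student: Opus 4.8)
The plan is to expand the bilinear form $\uvec_X(G^1)^T \Qmat_X \uvec_X(G^2)$ directly and match it term-by-term against the subset expansion $D(G,x)=\sum_{W\subseteq V}[N_G[W]=V]\,x^{|W|}$, generalizing the proof of Theorem~\ref{thm:articulation} by carrying the Kronecker structure across all of $X$. First I would write the right-hand side as a double sum over state pairs $(A_1,B_1),(A_2,B_2)\in R(X)$ of the products $D(G^1,(A_1,B_1),x)\cdot(\Qmat_X)_{(A_1,B_1),(A_2,B_2)}\cdot D(G^2,(A_2,B_2),x)$, and then unfold each factor $D(G^i,(A_i,B_i),x)$ using Definition~\ref{def:statePolynomialsDominationPolynomial}. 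This converts the expression into a sum over pairs $(W^1,W^2)$ with $W^i\subseteq V^i$ each satisfying the ``outside'' condition $N^i\setminus X=V^i\setminus X$, the summand being the single $\Qmat_X$-entry indexed by $(W^1\cap X,N^1\cap X)$ and $(W^2\cap X,N^2\cap X)$, times $x^{|W^1\setminus X|+|W^2\setminus X|}$.

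Next I would exploit the product structure through the bijection $f$, writing $(\Qmat_X)_{(A_1,B_1),(A_2,B_2)}=\prod_{v\in X}(\Qmat_v)_{(A_1\cap\{v\},\,B_1\cap\{v\}),(A_2\cap\{v\},\,B_2\cap\{v\})}$, and analyse the three entries of $\Qmat_v$ just as in Theorem~\ref{thm:articulation}. For each $v\in X$ this per-vertex factor does three jobs simultaneously: it vanishes unless the membership states agree (so $v\in W^1\iff v\in W^2$), forcing $W^1\cap X=W^2\cap X=:A$ and thereby letting the pair $(W^1,W^2)$ be identified with a single subset $W=W^1\cup W^2\subseteq V$; it vanishes, via the zero $(\varnothing,\varnothing),(\varnothing,\varnothing)$ entry of $\Qmat_v$, unless $v$ is dominated from $G^1$ or $G^2$, i.e. $v\in N^1\cup N^2$; and it supplies a factor $x$, the $(3,3)$ entry indexed by $(\{v\},\{v\})$, for each $v\in A$, furnishing the weight of the $X$-vertices that the exponents $x^{|W^i\setminus X|}$ deliberately omit. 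Taking the product over $v\in X$, the surviving $\Qmat_X$-entry equals $x^{|A|}\,[(N^1\cup N^2)\cap X=X]$.

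Finally I would substitute this back and invoke Lemma~\ref{lem:glueingLemmaDomination}. The weight collapses to $x^{|A|+|W^1\setminus X|+|W^2\setminus X|}=x^{|W|}$ by the first identity of the lemma, since $(W^1\cap X)\cup(W^2\cap X)=A$; and the three indicators $[N^1\setminus X=V^1\setminus X]$, $[(N^1\cup N^2)\cap X=X]$, $[N^2\setminus X=V^2\setminus X]$ combine by the second identity into $[N_G[W]=V]$. The sum over membership-consistent pairs $(W^1,W^2)$ therefore becomes $\sum_{W\subseteq V}[N_G[W]=V]\,x^{|W|}=D(G,x)$, as claimed.

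I expect the main obstacle to be organisational rather than conceptual: keeping the Kronecker index bookkeeping honest, so that the per-vertex analysis of $\Qmat_v$ multiplies cleanly into the global conditions of the glueing lemma without double-counting the shared vertices of $X$. The delicate points are confirming that membership-inconsistent pairs are annihilated precisely by the off-diagonal zeros of $\Qmat_v$, and that the $x^{|A|}$ weight correction is produced exactly once, which is where a careful appeal to the per-vertex reasoning of Theorem~\ref{thm:articulation} earns its keep.
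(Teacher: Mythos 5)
Your proof is correct and follows essentially the same route as the paper: both arguments rest on Lemma~\ref{lem:glueingLemmaDomination} to split the weight $x^{|W|}$ and the indicator $[N_G[W]=V]$ across the two sides, and on identifying the $\Qmat_X$-entry indexed by $((A_1,B_1),(A_2,B_2))$ with $x^{|A|}[B_1\cup B_2=X]$ when $A_1=A_2=A$ (and $0$ otherwise). You merely run the identity in the opposite direction (expanding the bilinear form rather than sorting the subset expansion) and make explicit the per-vertex Kronecker computation of the $\Qmat_X$-entries that the paper leaves implicit.
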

\begin{proof}
  We have, by Lemma~\ref{lem:glueingLemmaDomination}, that
   \begin{align*}
     x^{|W|}&=x^{|W^1\setminus X|}x^{|(W^1\cap X) \cup (W^2\cap X)|}x^{|W^2\setminus X|} 
   \end{align*}
   and 
   \begin{align*}
     [N_G[W]=V]=[N^1\setminus X=V^1\setminus X][B^1\cup B^2=X][N^2\setminus X=V^2\setminus X]
   \end{align*}
   for all $W\subseteq V$ with $N^i:=N_{G^i}[W^i]$, $B^i:=N^i\cap X$ and $i=1,2$.  A
   summation over all $W\subseteq V$ and sorting according to
   Definition~\ref{def:statePolynomialsDominationPolynomial} by
   $(A,B^i)\in R(X)$ yields
  \begin{align*}
    D(G,x)=
    \sum_{\mathclap{\substack{(A,B^1)\in R(X)\\(A,B^2)\in R(X)}}} 
    D(G^1,(A,B^1),x)x^{|A|}[B^1\cup B^2=X] 
    D(G^2,(A,B^2),x)\,,
  \end{align*}
  which is just the stated matrix equation.
\end{proof}

\begin{Definition}
  Let $G=(V,E)$ be a graph with splitting $(G^1,G^2,X)$. For every $(A,B)\in R(X)$ we
  denote by $G_{(A,B)}^i$ the graph
  \begin{align*}
    G_{(A,B)}^i&=G^i+\sum_{\substack{v\in A}}\{v,\cdot\}-\sum_{\substack{v\in B\setminus A }} v\,.
  \end{align*}
  Furthermore we introduce the vectors
  \begin{align*}
    \dvec_X(G^i)&=(D(G_{(A,B)}^i,x))_{(A,B)\in R(X)}
  \end{align*}
  with $\dvec_X(G^i)\in\Mat(R(X))$ for $i=1,2$. These vectors are again extensions
  of the vectors $\dvec_v(G^i)\in\Mat(R(\{v\}))$ already defined in Section~\ref{sec:articulations}.
\end{Definition}

\begin{Proposition}\label{prop:convertuInDVector}
  Let $G=(V,E)$ be a graph and $\emptyset\subset X\subseteq V$. We then have that
  \begin{align*}
    \dvec_X(G)&=\Pmat_X\Qmat_X \uvec_X(G).
  \end{align*}
\end{Proposition}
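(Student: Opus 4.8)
The plan is to prove the identity by induction on $|X|$, peeling off a single vertex of $X$ at each step and reducing the inductive step to the articulation case already settled in Theorem~\ref{thm:utodvector}. First I would note that $\uvec_X(G)$ and $\dvec_X(G)$ are well defined for a graph $G$ together with \emph{any} distinguished set $X\subseteq V$, since the state polynomials $D(G,(A,B),x)$ of Definition~\ref{def:statePolynomialsDominationPolynomial} depend only on $G$ and on $X$, not on a choice of splitting. The base case $X=\varnothing$ is immediate: then $R(\varnothing)$ is a singleton, $\uvec_\varnothing(G)=\dvec_\varnothing(G)=(D(G,x))$, and $\Pmat_\varnothing\Qmat_\varnothing$ is the empty Kronecker product, i.e. the $1\times 1$ identity.

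For the inductive step, fix $v\in X$ and set $X'=X\setminus\{v\}$. Using the bijection $f$ to identify $R(X)=R(\{v\})\times R(X')$ and singling $v$ out of the Kronecker products, the mixed-product property of $\otimes$ (already invoked in Proposition~\ref{prop:DmatrixFactorisation}) yields $\Pmat_X\Qmat_X=(\Pmat_v\Qmat_v)\otimes(\Pmat_{X'}\Qmat_{X'})$. I then decompose every vector in $\Mat(R(X))$ into its three \emph{$v$-blocks}, writing $\uvec_X(G)^{(A_v,B_v)}\in\Mat(R(X'))$ for the sub-vector collecting the entries whose $v$-component equals $(A_v,B_v)\in\{(\varnothing,\{v\}),(\varnothing,\varnothing),(\{v\},\{v\})\}$, and similarly for $\dvec_X(G)$. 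Setting $M'=\Pmat_{X'}\Qmat_{X'}$, each $v$-block of $(\Pmat_X\Qmat_X)\uvec_X(G)$ is obtained by first applying the $3\times 3$ matrix $\Pmat_v\Qmat_v$ across the three $v$-blocks of $\uvec_X(G)$ and then applying $M'$ to the result.

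The crux is to identify the three $v$-blocks of $\dvec_X(G)$. Reading off the construction of $G_{(A,B)}$, the block of $\dvec_X(G)$ at $v$-state $(\varnothing,\{v\})$ is exactly $\dvec_{X'}(G-v)$ (the $v$-coordinate forces deletion of $v$, after which $v$ drops out of the separating set), the block at $(\varnothing,\varnothing)$ is $\dvec_{X'}(G)$, and the block at $(\{v\},\{v\})$ is $\dvec_{X'}(G+\{v,\cdot\})$; these three graphs are precisely the entries of the articulation vector $\dvec_v$. Applying the inductive hypothesis to each of $G-v$, $G$ and $G+\{v,\cdot\}$ (all carrying the smaller separating set $X'$) rewrites each block as $M'$ times the corresponding $\uvec_{X'}$. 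Comparing with the previous paragraph and cancelling the common factor $M'$ block by block, the claim reduces to the three vector identities
\begin{align*}
\uvec_{X'}(G-v)&=\uvec_X(G)^{(\varnothing,\{v\})}+\uvec_X(G)^{(\varnothing,\varnothing)}\,,\\
\uvec_{X'}(G)&=\uvec_X(G)^{(\varnothing,\{v\})}+x\,\uvec_X(G)^{(\{v\},\{v\})}\,,\\
\uvec_{X'}(G+\{v,\cdot\})&=x\,\uvec_X(G)^{(\varnothing,\{v\})}+x\,\uvec_X(G)^{(\varnothing,\varnothing)}+(x+x^2)\,\uvec_X(G)^{(\{v\},\{v\})}\,,
\end{align*}
whose coefficients are exactly the three rows of $\Pmat_v\Qmat_v$.

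These identities are nothing but the single-vertex relation $\dvec_v=\Pmat\Qmat\uvec_v$ of Theorem~\ref{thm:utodvector}, \emph{relativized} to a fixed boundary state $(A',B')\in R(X')$. I would verify them entrywise in $(A',B')$: each entry is the same domination count used to prove Theorem~\ref{t:red} (sorting the relevant vertex subsets by whether $v\in W$ and whether $v$ is dominated), now carrying the extra constraints $W\cap X'=A'$, $N[W]\cap X'=B'$ and the requirement that $V\setminus X$ be dominated. I expect the main obstacle to be purely organizational: keeping the Kronecker block indexing consistent with the bijection $f$, correctly tracking the weight shift between $x^{|W\setminus X|}$ and $x^{|W\setminus X'|}$ as $v$ enters or leaves $W$ (this is the source of the factors of $x$ above), and confirming that the $X'$-constraints and the pendant appended at $v$ in $G+\{v,\cdot\}$ never interact — which holds because $X'\subseteq V\setminus\{v\}$ and the appended vertex is adjacent only to $v$. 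Once these relativized local identities are in place, the induction closes.
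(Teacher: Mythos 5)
Your plan is sound and, as far as I can check, every step of it goes through: the three $v$-blocks of $\dvec_X(G)$ are indeed $\dvec_{X'}(G-v)$, $\dvec_{X'}(G)$ and $\dvec_{X'}(G+\{v,\cdot\})$; the mixed-product identity $\Pmat_X\Qmat_X=(\Pmat_v\Qmat_v)\otimes(\Pmat_{X'}\Qmat_{X'})$ holds; and your three relativized identities are correct, with coefficients exactly the rows $(1,1,0)$, $(1,0,x)$ and $(x,x,x+x^2)$ of $\Pmat_v\Qmat_v$ (I verified them entrywise via the case split on $v\in W$ versus $v\in N_G[W]\setminus W$ versus $v\notin N_G[W]$, the factor $x$ coming from the weight shift between $x^{|W\setminus X|}$ and $x^{|W\setminus X'|}$ and the factor $1+x$ from the free choice of the appended pendant vertex). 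This is, however, a genuinely different argument from the paper's. The paper does not induct: for each fixed $(A,B)\in R(X)$ it applies Theorem~\ref{thm:splittingUvector} to the splitting of $G_{(A,B)}$ into the small gadget graph $G^1_{(A,B)}$ (the surviving vertices of $X$ together with their appended pendants) and the remainder, and then identifies $\uvec_X(G^1_{(A,B)})$ with the $(A,B)$-row of $\Pmat_X$ by writing the gadget as a disjoint union of single-vertex pieces and invoking multiplicativity. That route is shorter and reuses machinery already proved, but it quietly replaces $\uvec_X(G)$ by $\uvec_X$ of a modified graph $G^2_{(A,B)}$; your route works with $\uvec_X(G)$ throughout and makes completely explicit how the articulation relation of Theorem~\ref{thm:utodvector} propagates through the Kronecker structure, at the cost of having to prove the boundary-state-relativized versions of that relation, which the paper never needs in that form. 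One cosmetic remark: you do not actually need to ``cancel'' $M'$ --- it is cleaner to prove the three un-multiplied identities and then apply $M'$ to both sides --- although $M'$ is invertible by Proposition~\ref{prop:matrixInvertible}, so cancellation is also legitimate.
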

\begin{proof}
  The proof can be accomplished by utilizing Theorem~\ref{thm:splittingUvector} and is 
  a straightforward extension of Theorem~\ref{thm:utodvector} to arbitrary separating vertex sets.

  Let $(A,B)\in R(X)$ and consider the splitting $(G^1_{(A,B)},G^2_{(A,B)},X)$ of the
  graph
  \begin{align*}
    G_{(A,B)}&=G+\sum_{v\in A}\{v,\cdot\}-\sum_{v\in B\setminus A} v\,,
  \end{align*}
  such that 
  \begin{align*}
    G^1_{(A,B)}&=(X\setminus(B\setminus A)\cup \{v':v\in A\}, \{\{v,v'\}:v\in A\}) \\
    G^2_{(A,B)}&=(V\setminus(B\setminus A), \{\{u,v\}\in E: \{u,v\}\subseteq V\setminus(B\setminus A)\})\,.
  \end{align*}
  Now we have
  \begin{align*}
    D(G_{(A,B)},x)&=\uvec_X(G^1_{(A,B)})^T\Qmat_X\uvec_X(G^2_{(A,B)}) 
  \end{align*}
  by Theorem~\ref{thm:splittingUvector}.  Hence we have to show that
  $\uvec_X(G^1_{(A,B)})$ equals the row of the matrix $\Pmat_X$
  indexed by $(A,B)$. Let $X=\{x_1,\ldots, x_k\}$ and observe that the
  graph $G^1_{(A,B)}$ can be written as the disjoint union of $k$
  graphs, say $H_1,\ldots, H_k$, some of them possibly being null
  graphs (graphs with no vertices), whenever $x_i\in B\setminus A$.  If we pick an element in
  $\uvec_X(G^1_{(A,B)})$ being indexed by $(C,D)\in R(X)$ we can
  therefore conclude
  \begin{align}\label{eq:ABCD}
    D(G_{(A,B)}^1,(C,D),x)&=\prod_{i=1}^k D(H_i, (C\cap \{x_i\}, D\cap \{x_i\}), x)\,,
  \end{align}
  because the domination property is multiplicative in components. The
  right-hand side of Equation (\ref{eq:ABCD}) equals the element of the
  matrix $\Pmat_X$ in the row indexed by $(A,B)$ and the column
  indexed by $(C,D)$. 
\end{proof}

We are now able to state the main theorem of this section which
will enables calculation of the domination polynomial
of a graph given arbitrary separating vertex sets.
\begin{Theorem}[Splitting Formula Theorem] 
\label{thm:symmDomSplittingFormula}
  Let $G=(V,E)$ be a graph with splitting $(G^1,G^2,X)$. We have 
  \begin{align*}
    D(G,x)&=\dvec_X(G^1)^T \Dmat^{-1}_X\dvec_X(G^2)\,.
  \end{align*}
\end{Theorem}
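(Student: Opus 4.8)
The plan is to reduce the Splitting Formula Theorem to the two results already established in this section, namely Theorem~\ref{thm:splittingUvector} and Proposition~\ref{prop:convertuInDVector}, exactly mirroring the proof in the articulation case (the theorem immediately preceding Section~\ref{se:sep}). The starting point is Theorem~\ref{thm:splittingUvector}, which gives
\[
D(G,x)=\uvec_X(G^1)^T\Qmat_X\uvec_X(G^2)\,.
\]
First I would invert the relation supplied by Proposition~\ref{prop:convertuInDVector}. Since the matrices $\Pmat_X$ and $\Qmat_X$ are invertible by Proposition~\ref{prop:matrixInvertible}, the equation $\dvec_X(G^i)=\Pmat_X\Qmat_X\uvec_X(G^i)$ can be solved for $\uvec_X(G^i)$, yielding
\[
\uvec_X(G^i)=(\Pmat_X\Qmat_X)^{-1}\dvec_X(G^i)=\Qmat_X^{-1}\Pmat_X^{-1}\dvec_X(G^i)
\]
for $i=1,2$.

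Next I would substitute these two expressions into the formula from Theorem~\ref{thm:splittingUvector}. Taking the transpose of the $i=1$ expression gives $\uvec_X(G^1)^T=\dvec_X(G^1)^T(\Pmat_X^{-1})^T(\Qmat_X^{-1})^T$, so that
\[
D(G,x)=\dvec_X(G^1)^T(\Pmat_X^{-1})^T(\Qmat_X^{-1})^T\,\Qmat_X\,\Qmat_X^{-1}\Pmat_X^{-1}\dvec_X(G^2)\,.
\]
The middle factor simplifies because $\Qmat_X$ is symmetric, so $(\Qmat_X^{-1})^T=\Qmat_X^{-1}$ and the product $(\Qmat_X^{-1})^T\Qmat_X\Qmat_X^{-1}=\Qmat_X^{-1}$, leaving
\[
D(G,x)=\dvec_X(G^1)^T\,(\Pmat_X^T)^{-1}\Qmat_X^{-1}\Pmat_X^{-1}\,\dvec_X(G^2)\,.
\]
By Proposition~\ref{prop:DmatrixFactorisation} we have $\Dmat_X=\Pmat_X\Qmat_X\Pmat_X^T$, hence $\Dmat_X^{-1}=(\Pmat_X^T)^{-1}\Qmat_X^{-1}\Pmat_X^{-1}$, which is precisely the central factor above, completing the proof.

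The main obstacle to watch for is the symmetry bookkeeping: the argument relies on $\Qmat_X$ and $\Pmat_X^T$ interacting correctly so that the factorisation $\Dmat_X=\Pmat_X\Qmat_X\Pmat_X^T$ produces exactly the product appearing between the two $\dvec_X$ vectors. I would verify that $\Qmat_X$ is symmetric, which follows because each $\Qmat_v$ is symmetric and the Kronecker product of symmetric matrices is symmetric. Once this is confirmed the transpose manipulations go through cleanly, and the entire proof is a one-line substitution analogous to the articulation case; no new combinatorial content is needed beyond the linear-algebraic identities already assembled in Propositions~\ref{prop:matrixInvertible} and~\ref{prop:DmatrixFactorisation}.
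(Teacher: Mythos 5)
Your proposal is correct and follows essentially the same route as the paper: substitute the inverted relation $\uvec_X(G^i)=\Qmat_X^{-1}\Pmat_X^{-1}\dvec_X(G^i)$ from Proposition~\ref{prop:convertuInDVector} into Theorem~\ref{thm:splittingUvector} and identify the middle factor with $\Dmat_X^{-1}$ via Proposition~\ref{prop:DmatrixFactorisation}. Your explicit check that $\Qmat_X$ is symmetric (so that $(\Qmat_X^{-1})^T\Qmat_X\Qmat_X^{-1}=\Qmat_X^{-1}$) is a detail the paper passes over silently, and is a worthwhile addition.
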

\begin{proof}
  It is
  \begin{align*} 
    D(G,x)&=\uvec_X(G^1)^T \Qmat_X \uvec_X(G^2)  & \tag{Theorem~\ref{thm:splittingUvector}}   \\
    \dvec_X(G^i)&=\Pmat_X\Qmat_X\uvec_X(G^i) \qquad\qquad i=1,2    & \tag{Proposition~\ref{prop:convertuInDVector}}
  \end{align*}
  and by Proposition~\ref{prop:matrixInvertible}  we know that the
  matrices $\Pmat_X$ and $\Qmat_X$ are invertible, so that
  \begin{align*}
    \uvec_X(G^i)&=\Qmat^{-1}_X\Pmat^{-1}_X\dvec_X(G^i)
  \end{align*}
  for $i=1,2$.
  Hence, after substitution, we have 
  \begin{align*}
    D(G,x)&=\dvec_X(G^1)_X^T\Pmat^{-1,T}_X\Qmat^{-1}_X\Pmat^{-1}_X\dvec_X(G^2) \\
    &=\dvec_X(G^1)^T(\Pmat_X\Qmat_X\Pmat^{T}_X)^{-1}\dvec_X(G^2) \\
    &=\dvec_X(G^1)^T\Dmat^{-1}_X\dvec_X(G^2). \qedhere
  \end{align*}
\end{proof}


\subsection{An particular case of the Splitting Formula Theorem} \label{subse:application}
In this subsection we illustrate the use 
of Theorem \ref{thm:symmDomSplittingFormula} by applying it to 
the case of a simple separating set of size $2$. 
Let $G$ be a graph with splitting $(G^1,G^2,X)$ at the
separating vertex set $X=\{u,v\}$. In this case we have the two
matrices $\Dmat_u\in\Mat(R(\{u\}),R(\{u\}))$ and
$\Dmat_v\in\Mat(R(\{v\}),R(\{v\}))$ with
\begin{align*}
  \Dmat_u&=
  \bordermatrix{
    & (\varnothing,\{u\}) & (\varnothing, \varnothing) & (\{u\},\{u\}) \cr
    (\varnothing,\{u\} & 1 & 1 & x\cr
    (\varnothing, \varnothing) & 1 & x & 2x+x^2 \cr
    (\{u\},\{u\}) & x & 2x+x^2 & x+3x^2+x^3 
} \\
  \Dmat_v&=
  \bordermatrix{
    & (\varnothing,\{v\}) & (\varnothing, \varnothing) & (\{v\},\{v\}) \cr
    (\varnothing,\{v\}) & 1 & 1 & x\cr
    (\varnothing, \varnothing) & 1 & x & 2x+x^2 \cr
    (\{v\},\{v\}) & x & 2x+x^2 & x+3x^2+x^3 
}
\end{align*}  
and the inverse matrices
\begin{align*}
  \Dmat_u^{-1}&=
  \bordermatrix{
    & (\varnothing,\{u\}) & (\varnothing, \varnothing) & (\{u\},\{u\}) \cr
    (\varnothing,\{u\}) & x^{3} + 3 x^{2} & x^{2} + x & -2 x \cr
    (\varnothing, \varnothing) & x^{2} + x & -x^{3} - 2 x^{2} - x & x^{2} + x \cr
    (\{u\},\{u\}) & -2 x & x^{2} + x & 1-x
} \cdot
  \frac{1}{x(1+x)^2} \\
  \Dmat_v^{-1}&=
  \bordermatrix{
    & (\varnothing,\{v\}) & (\varnothing, \varnothing) & (\{v\},\{v\}) \cr
    (\varnothing,\{v\}) & x^{3} + 3 x^{2} & x^{2} + x & -2 x \cr
    (\varnothing, \varnothing) & x^{2} + x & -x^{3} - 2 x^{2} - x & x^{2} + x \cr
    (\{v\},\{v\}) & -2 x & x^{2} + x & 1-x
} \cdot
  \frac{1}{x(1+x)^2}\,.
\end{align*}
\newcommand{\pa}{\ensuremath{\scriptstyle{D(P_0)}}}
\newcommand{\pb}{\ensuremath{\scriptstyle{D(P_1)}}}
\newcommand{\pc}{\ensuremath{\scriptstyle{D(P_2)}}}
\newcommand{\pd}{\ensuremath{\scriptstyle{D(P_3)}}}
\newcommand{\pe}{\ensuremath{\scriptstyle{D(P_4)}}} 
By the definition of the Kronecker product,
the matrix $\Dmat_X=\Dmat_u\otimes \Dmat_v$ evaluates 
to the matrix depicted in 
appendix A. 
Observe that the rows and columns of the
matrix $\Dmat_X$ are labelled by $R(\{u\})\times R(\{v\})$ which will be 
somewhat inconvenient. Hence we assume
that for the labelling of the rows and columns the function $f\colon
R(\{u\})\times R(\{v\})\rightarrow R(\{u,v\})$ is applied. This gives
the second matrix in Appendix A. 
The inverse matrix $\Dmat_X^{-1}$ can then be
computed in the same vein by $\Dmat_X^{-1}=\Dmat_u^{-1}\otimes
\Dmat_v^{-1}$ and is shown in Appendix B. 

The vectors $\dvec_X(G^1)$ and $\dvec_X(G^2)$ are given by 
\begin{gather}
\notag
  \dvec_X(G^i)
=
  \bordermatrix{ & \cr
    \scriptstyle{(\varnothing, \{u,v\})}    & D(G^i-u-v,x) \cr
    \scriptstyle{(\varnothing,\{v\})}       & D(G^i-v,x)\cr
    \scriptstyle{(\{u\},\{u,v\})}           & D(G^i+\{u,\cdot\}-v,x)\cr
    \scriptstyle{(\varnothing, \{u\})}      & D(G^i-u,x)\cr
    \scriptstyle{(\varnothing,\varnothing)} & D(G^i,x) \cr
    \scriptstyle{(\{u\},\{u\})}             & D(G^i+\{u,\cdot\},x) \cr
    \scriptstyle{(\{v\},\{u,v\})}           & D(G^i+\{v,\cdot\}-u,x)\cr
    \scriptstyle{(\{v\},\{v\})}             & D(G^i+\{v,\cdot\},x)\cr 
    \scriptstyle{(\{u,v\},\{u,v\})}         & D(G^i+\{u,\cdot\}+\{v,\cdot\},x)\cr
  } 
\end{gather}
and the splitting theorem yields
\begin{align*}
  D(G,x)&=\dvec_X(G^1)^{T} \Dmat_X^{-1}\dvec_X(G^2).
\end{align*}

\begin{thm}
Let $G=(V,E)$ be a graph. For every $e=\{u,v\}\in E$, 
\begin{eqnarray*}
  D(G,x)&=&\frac{1}{(1+x)^2}\Big[ x(1+x)\left(D(G-v,x)+D(G-u,x)\right) \\
& & +(1-x)D(G+\{u,\cdot\}-v,x)-(1+x)D(G-e+\{u,\cdot\},x)\\
& & +(1-x)D(G-u+\{v,\cdot\},x)-(1+x)D(G-e+\{v,\cdot\},x) \\   
& & +(1+x)^2D(G-e,x)+2D(G-e+\{u,\cdot\}+\{v,\cdot\},x) \\ 
& & -2xD(G-u-v,x) \Big]\,.
\end{eqnarray*}
\end{thm}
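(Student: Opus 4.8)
The plan is to obtain the formula as a direct instance of the Splitting Formula Theorem (Theorem~\ref{thm:symmDomSplittingFormula}) applied to the separating set $X=\{u,v\}$. The crucial choice is the splitting $(G^1,G^2,X)$ in which $G^1=(\{u,v\},\{e\})$ consists of the single edge $e$ together with its two endpoints, while $G^2=G-e$ carries all remaining edges of $G$ on the full vertex set $V$. One checks immediately that this is a valid splitting: $V^1\cup V^2=V$, $V^1\cap V^2=\{u,v\}=X$, $E^1\cup E^2=E$ and $E^1\cap E^2=\varnothing$. Theorem~\ref{thm:symmDomSplittingFormula} then gives
\[
D(G,x)=\dvec_X(G^1)^T\Dmat_X^{-1}\dvec_X(G^2),
\]
so the whole task reduces to evaluating the two vectors and the inverse matrix for this specific separator of size two, for which $\Dmat_X^{-1}=\Dmat_u^{-1}\otimes\Dmat_v^{-1}$ is already recorded in Appendix~B.

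First I would compute $\dvec_X(G^1)$. Since $G^1$ is just the edge $e$, each modified graph $G^1_{(A,B)}$ (append a pendant at every vertex of $A$, delete every vertex of $B\setminus A$) is a very short path, and its domination polynomial is one of $D(P_0,x),\dots,D(P_4,x)$. For instance $(A,B)=(\varnothing,\{u,v\})$ yields the empty graph $P_0$, $(A,B)=(\varnothing,\varnothing)$ yields $K_2=P_2$, $(A,B)=(\{u\},\{u\})$ yields $P_3$, and $(A,B)=(\{u,v\},\{u,v\})$ yields $P_4$. Collecting the nine entries shows that $\dvec_X(G^1)$ is a fixed vector independent of $G$, namely a Hankel-type arrangement of the path polynomials $D(P_0,x),\dots,D(P_4,x)$. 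This mirrors the appearance of path polynomials in $\Dmat$ (Lemma~\ref{lem:dmatvFactorisation}) and is what ultimately forces the final coefficients to be so simple.

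Next I would form the coefficient row vector $\cvec^T:=\dvec_X(G^1)^T\Dmat_X^{-1}$; this is a fixed vector of rational functions in $x$, after which $D(G,x)=\cvec^T\dvec_X(G^2)$ is exactly the claimed linear combination. To carry out the multiplication cleanly I would exploit the Kronecker factorisation $\Dmat_X^{-1}=\Dmat^{-1}\otimes\Dmat^{-1}$ (here $\Dmat_u=\Dmat_v=\Dmat$) together with the mixed-product identity: reshaping $\dvec_X(G^1)$ into a $3\times3$ matrix $\Rmat$ whose entries are the path polynomials reduces the computation of $\cvec$ to the single $3\times3$ product $\Dmat^{-1}\Rmat\Dmat^{-1}$, which is symmetric because $\Rmat$ and $\Dmat$ are. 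Reading off its entries and matching them against the $f$-relabelled ordering of Appendix~A produces the coefficients $x(1+x)$, $1-x$, $-(1+x)$, $(1+x)^2$, $2$ and $-2x$, all divided by $(1+x)^2$. Finally I would rewrite the entries of $\dvec_X(G^2)=\dvec_X(G-e)$ in the notation of the statement, using that deleting an endpoint of $e$ (or both) already deletes $e$, so that $D(G-e-v,x)=D(G-v,x)$, $D(G-e-u,x)=D(G-u,x)$ and $D(G-e-u-v,x)=D(G-u-v,x)$, while the appended-pendant graphs stay as written.

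The main obstacle is purely computational bookkeeping: handling the $9\times9$ Kronecker matrix of rational functions and, above all, keeping the ordering conventions straight, since the rows and columns of $\Dmat_X$ are indexed by $R(\{u\})\times R(\{v\})$ and must be passed through the bijection $f$ to match the ordering used for $\dvec_X(G^i)$. Reducing everything to $3\times3$ arithmetic via the mixed-product property, and using the symmetry of $\Dmat$ and of the reshaped $\dvec_X(G^1)$ (which removes any ambiguity between $\Dmat_u\otimes\Dmat_v$ and $\Dmat_v\otimes\Dmat_u$), is precisely what turns this obstacle into a short and checkable calculation rather than an error-prone one.
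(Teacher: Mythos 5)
Your proposal is correct and follows essentially the same route as the paper: both instantiate Theorem~\ref{thm:symmDomSplittingFormula} with the splitting of $G$ at $X=\{u,v\}$ into $G-e$ and the single edge $(\{u,v\},\{e\})$ (you merely swap the roles of $G^1$ and $G^2$, which is immaterial since $\Dmat_X$ is symmetric), evaluate the fixed vector of path polynomials for the edge side, hit it with $\Dmat_X^{-1}$, and simplify using $D(G-e-v,x)=D(G-v,x)$ and its companions. Your reshaping of the $9\times 9$ Kronecker computation into the $3\times 3$ product $\Dmat^{-1}\Rmat\Dmat^{-1}$ is a tidier way to organize the arithmetic than the paper's appeal to a computer algebra system, but it is the same argument.
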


\begin{proof}
Assume now that 
$e=\{u,v\}\in E$ is an edge of $G=(V,E)$ and that we have the
splitting $G^1=G-e$ and ${G^2=(\{u,v\},\{\{u,v\}\})}$ at the separating
vertex set $X=\{u,v\}$. The vector $\dvec_X(G^2)$ can be computed to be
\begin{align*}
  \dvec_X(G^2)&=
  \bordermatrix{ & \cr
    \scriptstyle{(\varnothing, \{u,v\})}    & D(P_0,x) \cr
    \scriptstyle{(\varnothing,\{v\})}       & D(P_1,x)\cr
    \scriptstyle{(\{u\},\{u,v\})}           & D(P_2,x)\cr
    \scriptstyle{(\varnothing, \{u\})}      & D(P_1,x)\cr
    \scriptstyle{(\varnothing,\varnothing)} & D(P_2,x) \cr
    \scriptstyle{(\{u\},\{u\})}             & D(P_3,x) \cr
    \scriptstyle{(\{v\},\{u,v\})}           & D(P_2,x)\cr
    \scriptstyle{(\{v\},\{v\})}             & D(P_3,x)\cr 
    \scriptstyle{(\{u,v\},\{u,v\})}         & D(P_4,x)\cr
  }=
  \bordermatrix{ & \cr
    \scriptstyle{(\varnothing, \{u,v\})}    & 1 \cr
    \scriptstyle{(\varnothing,\{v\})}       & x\cr
    \scriptstyle{(\{u\},\{u,v\})}           & x^2+2x\cr
    \scriptstyle{(\varnothing, \{u\})}      & x \cr
    \scriptstyle{(\varnothing,\varnothing)} & x^2+2x \cr
    \scriptstyle{(\{u\},\{u\})}             & x^3+3x^2+x \cr
    \scriptstyle{(\{v\},\{u,v\})}           & x^2+2x\cr
    \scriptstyle{(\{v\},\{v\})}             & x^3+3x^2+x\cr 
    \scriptstyle{(\{u,v\},\{u,v\})}         & x^4+4x^3+4x^2\cr
  }
\end{align*}
and a computer algebra system can be used to obtain:
\begin{align*}
  \Dmat_X^{-1}\dvec_X(G^2)&=
  \bordermatrix{ & \cr
    \scriptstyle{(\varnothing, \{u,v\})}    & (1+x)^2 \cr
    \scriptstyle{(\varnothing,\{v\})}       & x(1+x)\cr
    \scriptstyle{(\{u\},\{u,v\})}           & -(1+x)\cr
    \scriptstyle{(\varnothing, \{u\})}      & x(1+x) \cr
    \scriptstyle{(\varnothing,\varnothing)} & -2x \cr
    \scriptstyle{(\{u\},\{u\})}             & 1-x \cr
    \scriptstyle{(\{v\},\{u,v\})}           & -(1+x)\cr
    \scriptstyle{(\{v\},\{v\})}             & 1-x\cr 
    \scriptstyle{(\{u,v\},\{u,v\})}         & 2\cr
  } \cdot \frac{1}{(1+x)^2}\,.
\end{align*}
Hence we compute by the splitting theorem
\begin{eqnarray*} 
  D(G,x)&=& \frac{1}{(1+x)^2} \Big[ x(1+x)\left(D(G^1-v,x)+D(G^1-u,x)\right)+\\
& & (1-x)D(G^1+\{u,\cdot\}-v,x)-(1+x)D(G^1+\{u,\cdot\},x)+\\
  &&(1-x)D(G^1-u+\{v,\cdot\},x)-(1+x)D(G^1+\{v,\cdot\},x)+\\
&&(1+x)^2D(G^1,x)+2D(G^1+\{u,\cdot\}+\{v,\cdot\},x)\\
& & -2xD(G^1-u-v,x) \Big].
\end{eqnarray*}

Substituting $G^1= G - e$ gives the required answer.
\end{proof}

It is interesting that this formula, like 
Theorem \ref{th:arbitrary_rec}, is a linear
combination of exactly nine terms.  Unfortunately, the formula is not
as useful as Theorem \ref{th:arbitrary_rec}, since we have to compute domination
polynomials of graphs that are emerging from $G$ by adding additional
edges.


\section{A recurrence relation using derivatives}\label{se:der}
\begin{lem}
Let $G=(V,E)$ be a graph and $u\in V$.  Then
\label{lem:D_w} $D_{u\in W}(G,x)=D(G,x)-D(G-u,x)+\p{u}(G,x)$\,.
\end{lem}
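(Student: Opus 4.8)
The plan is to read off this identity directly from Theorem~\ref{t:red}, since almost all of the work has already been done there. Every vertex subset $W$ counted in $D(G,x)$ either contains the fixed vertex $u$ or does not, and these two cases are disjoint and exhaustive. Grouping the monomials $x^{\abs{W}}$ accordingly gives the complementary splitting
\[
D(G,x) = D_{u\in W}(G,x) + D_{u\notin W}(G,x)\,,
\]
which is exactly the decomposition already used at the end of the proof of Theorem~\ref{t:red}.

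The next step is to substitute the closed form for the second summand. Equation~(\ref{eq:u_notin_W}) of Theorem~\ref{t:red} states that $D_{u\notin W}(G,x) = D(G-u,x) - \p{u}(G,x)$. Combining this with the splitting above and solving for $D_{u\in W}(G,x)$ yields
\[
D_{u\in W}(G,x) = D(G,x) - \bigl(D(G-u,x) - \p{u}(G,x)\bigr) = D(G,x) - D(G-u,x) + \p{u}(G,x)\,,
\]
which is precisely the claimed formula.

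I do not expect any genuine obstacle here: the lemma is a one-line rearrangement of facts already in hand, so the only thing to get right is the bookkeeping of which term has which sign. If one preferred a self-contained argument that avoids citing Theorem~\ref{t:red}, the single point needing care would be re-establishing Equation~(\ref{eq:u_notin_W}) combinatorially: among the dominating sets of $G-u$ counted by $D(G-u,x)$, exactly those containing no neighbor of $u$ in $G$ fail to dominate $u$ and hence are not dominating sets of $G$, and these are precisely the sets enumerated by $\p{u}(G,x)$. Subtracting them off leaves the dominating sets of $G$ that avoid $u$, giving Equation~(\ref{eq:u_notin_W}) and thereby the lemma.
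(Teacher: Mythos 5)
Your proof is correct and follows essentially the same route as the paper: both invoke Equation~(\ref{eq:u_notin_W}) from Theorem~\ref{t:red} and combine it with the trivial splitting $D(G,x)=D_{u\in W}(G,x)+D_{u\notin W}(G,x)$ to isolate $D_{u\in W}(G,x)$. The additional combinatorial justification of Equation~(\ref{eq:u_notin_W}) you sketch is the same argument already given in the paper's proof of Theorem~\ref{t:red}.
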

\begin{proof}
From Theorem \ref{t:red} we have
\begin{gather} \label{eq:notin}
 D_{u \notin W}(G, x) = D(G-u, x) - \p{u}(G, x)\,.
\end{gather}
Adding $D_{u \in W}(G,x)$ to both sides of Equation (\ref{eq:notin})
and rearranging the terms, we get the lemma.
\end{proof}

We denote by 
$A(G,x)$ the following sum: 
\[
A(G,x)=\sum_{v\in V}D(G-v,x)-D(G/v,x)-D(G-N[v],x)\,.
\]

$D^{(i)}(G,x)$ denotes the $i$th derivative of the domination polynomial
$D(G,x)$ with respect to its indeterminate $x$. Similarly, $A^{(i)}(G,x)$
denotes the $i$th derivative of $A(G,x)$. 
\begin{thm}
\label{th:der}
Let $G=(V,E)$ be a graph. For every $i\geq0$, 
\[
D^{(i)}(G,x)=\frac{1+x}{|V|-i}D^{(i+1)}(G,x)+\frac{1}{|V|-i}A^{(i)}(G,x)\,.
\]

\begin{proof}
The proof is by induction. First we prove the case for $i=0$. 
\begin{eqnarray*}
D(G,x) & = & \sum_{W\subseteq V:N[W]=V}x^{|W|}\\
 & = & \sum_{W\subseteq V:N[W]=V}\,\sum_{w\in W}\frac{1}{|W|}x^{|W|}\\
 & = & \sum_{w\in V}\,\sum_{\{w\}\subseteq W\subseteq V:N[W]=V}\frac{1}{|W|}x^{|W|}\,.
\end{eqnarray*}
Taking the derivative of $D(G,x)$ with respect to $x$ and using Lemma \ref{lem:D_w}
we get
\begin{eqnarray*}
D^{(1)}(G,x) 
& = & \sum_{w\in V}\sum_{\{w\}\subseteq W\subseteq V:N[W]=V}x^{|W|-1}\\
& = & \sum_{w\in V} \frac{1}{x} D_{w\in W}(G,x)\\
 & = & \frac{1}{x}\sum_{w\in V}D(G,x)-D(G-w,x)+\p{w}(G,x)\,.
\end{eqnarray*}
Now, using Theorem \ref{t:red}, 
\begin{eqnarray*}
D^{(1)}(G,x) & = & \frac{1}{1+x}\sum_{w\in V} \big( D(G,x)-D(G-w,x) + D(G/w,x)
\\ & &  +D(G-N[w],x)\big)
  =  \frac{|V|}{1+x}D(G,x)-\frac{1}{1+x}A(G,x)\,,
\end{eqnarray*}
and the case $i=0$ follows.

Assuming that we have 
$D^{(i)}(G,x)$ as stated in the theorem we can
multiply by $|V|-i$ and take the derivative of both sides to get
\[
(|V|-i) D^{(i+1)}(G,x)=(1+x)D^{(i+2)}(G,x)+
D^{(i+1)}(G,x)+ A^{(i+1)}(G,x) \,.
\]
We can then re-arrange this equation to get
\[
(|V|-(i+1)) D^{(i+1)}(G,x)=(1+x)D^{(i+2)}(G,x)+ A^{(i+1)}(G,x) 
\]
which establishes the induction on division by $|V|- (i+1)$.
\end{proof}
\end{thm}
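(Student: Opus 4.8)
The plan is to prove the identity by induction on $i$, concentrating the genuine work in the base case $i=0$; the inductive step will be a routine differentiation. For the base case I would first rewrite $D(G,x)$ by distributing each monomial $x^{|W|}$ uniformly over the vertices of its dominating set $W$. Since the empty set does not dominate a nonempty graph, every dominating set satisfies $|W|\geq 1$, so after exchanging the order of summation
\[
D(G,x)=\sum_{W:N[W]=V} x^{|W|} = \sum_{w\in V}\ \sum_{\substack{W\ni w\\ N[W]=V}} \frac{1}{|W|}\,x^{|W|}\,.
\]
Differentiating term by term cancels the factor $1/|W|$ against the exponent and yields the clean identity
\[
D^{(1)}(G,x)=\frac{1}{x}\sum_{w\in V} D_{w\in W}(G,x)\,.
\]

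Next I would eliminate $D_{w\in W}$ and $\p{w}$. Substituting Lemma~\ref{lem:D_w}, namely $D_{w\in W}(G,x)=D(G,x)-D(G-w,x)+\p{w}(G,x)$, expresses $x\,D^{(1)}(G,x)$ as a sum over $w$ of $D(G,x)-D(G-w,x)+\p{w}(G,x)$. I then use Equation~\eqref{eq:D} of Theorem~\ref{t:red} to solve for $(1+x)\p{w}(G,x)$ and substitute; after multiplying through by $(1+x)$ and dividing by $x$, the $\p{w}$-terms together with the spurious factors of $x$ collapse, leaving
\[
(1+x)\,D^{(1)}(G,x)=\sum_{w\in V}\bigl[D(G,x)-D(G-w,x)+D(G/w,x)+D(G-N[w],x)\bigr]\,.
\]
Recognizing $\sum_{w\in V}D(G,x)=|V|\,D(G,x)$ and reading the remaining sum as $-A(G,x)$ directly from the definition of $A(G,x)$ gives $|V|\,D(G,x)=(1+x)\,D^{(1)}(G,x)+A(G,x)$, which is exactly the $i=0$ case after dividing by $|V|$.

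For the inductive step I would clear denominators in the hypothesis to write $(|V|-i)\,D^{(i)}(G,x)=(1+x)\,D^{(i+1)}(G,x)+A^{(i)}(G,x)$ and differentiate once in $x$. The product rule produces an extra copy of $D^{(i+1)}(G,x)$ from the $(1+x)$ factor; moving it to the left-hand side turns the coefficient $|V|-i$ into $|V|-(i+1)$, and dividing by that quantity yields precisely the statement at level $i+1$.

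I expect the main obstacle to be the algebraic bookkeeping in the base case: keeping track of the factors of $x$ and $(1+x)$ while substituting both Lemma~\ref{lem:D_w} and Theorem~\ref{t:red}, so that the $\p{w}$-terms cancel cleanly and the contraction, deletion and extraction terms assemble exactly into $A(G,x)$. The conceptual crux is the averaging-and-swap step that produces $D^{(1)}(G,x)=\frac{1}{x}\sum_{w\in V} D_{w\in W}(G,x)$; once this identity is in hand, everything else is careful rearrangement, and the induction is essentially mechanical.
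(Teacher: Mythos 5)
Your proposal is correct and follows essentially the same route as the paper: the uniform $1/|W|$ redistribution and summation swap to get $D^{(1)}(G,x)=\frac{1}{x}\sum_{w\in V}D_{w\in W}(G,x)$, elimination of $\p{w}$ via Lemma~\ref{lem:D_w} and Theorem~\ref{t:red} to assemble $A(G,x)$, and the mechanical differentiate-and-rearrange induction step are all exactly the paper's argument.
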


We can obtain a representation of $D(G,x)$
in terms of $A(G,x)$.
\begin{thm}
Let $G=(V,E)$ be a graph. Then
\[
D(G,x)=(1+x)^{|V|}+\sum_{i=0}^{|V|-1}(1+x)^{i}\frac{\left(|V|-(i+1)\right)!}{|V|!}A^{(i)}(G,x)\,.
\]
\end{thm}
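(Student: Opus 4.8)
The plan is to iterate the recurrence of Theorem~\ref{th:der} from $i=0$ upward and to recognise the result as a telescoping product. Write $n=|V|$. Theorem~\ref{th:der} tells us that for each $0\le i\le n-1$,
\[
D^{(i)}(G,x)=\frac{1+x}{n-i}D^{(i+1)}(G,x)+\frac{1}{n-i}A^{(i)}(G,x)\,.
\]
First I would substitute the $i=1$ instance into the $i=0$ instance, then the $i=2$ instance, and so on, peeling off one derivative of $D$ at each stage while accumulating a weighted sum of the $A^{(i)}(G,x)$. No further input from Theorem~\ref{t:red} or the subset definitions is needed; everything is already packaged inside Theorem~\ref{th:der}.

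Concretely, I would establish by induction on $k$, for $0\le k\le n$, the intermediate identity
\[
D(G,x)=\frac{(1+x)^k(n-k)!}{n!}\,D^{(k)}(G,x)+\sum_{i=0}^{k-1}\frac{(1+x)^i(n-i-1)!}{n!}\,A^{(i)}(G,x)\,.
\]
The base case $k=0$ is immediate. For the inductive step (valid while $k\le n-1$) I would replace $D^{(k)}(G,x)$ using the displayed recurrence. The prefactor of $D^{(k)}$ multiplied by $\tfrac{1+x}{n-k}$ becomes the coefficient $\tfrac{(1+x)^{k+1}(n-k-1)!}{n!}$ of the new $D^{(k+1)}$ term, since $(n-k)!/(n-k)=(n-k-1)!$; the same prefactor multiplied by $\tfrac{1}{n-k}$ yields exactly $\tfrac{(1+x)^k(n-k-1)!}{n!}$, which is the next summand $A^{(k)}(G,x)$ and extends the sum to run up to $i=k$. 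This bookkeeping with the factorial prefactors is the only computational content, and it is routine once the telescoping pattern is spotted.

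Finally I would set $k=n$. Since $V$ itself is the unique dominating set of size $n$, the polynomial $D(G,x)$ has degree exactly $n$ with leading coefficient $1$, so its $n$th derivative is the constant $D^{(n)}(G,x)=n!$. Substituting $k=n$ into the intermediate identity turns the first term into $\tfrac{(1+x)^n(n-n)!}{n!}\cdot n!=(1+x)^n$, using $(n-n)!=0!=1$, and the remaining sum runs over $0\le i\le n-1$, which is precisely the asserted formula. The only point requiring a moment's care, rather than a genuine obstacle, is justifying $D^{(n)}(G,x)=n!$ and checking that the factorial prefactor is well defined at the terminal index $k=n$; the rest is the straightforward induction above.
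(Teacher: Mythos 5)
Your argument is correct and is essentially the paper's proof run in the opposite direction: the paper rearranges Theorem~\ref{th:der} to isolate $A^{(i)}(G,x)$ and lets the resulting sum telescope down to $D(G,x)-(1+x)^{|V|}$, while you unroll the same recurrence forward via an explicit induction on $k$ and terminate with $D^{(|V|)}(G,x)=|V|!$. The factorial bookkeeping and the use of monicity are identical in both, so no further comment is needed.
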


\begin{proof}
Rearranging Theorem \ref{th:der} to make $A^{(i)}(G,x)$ the subject we have that 
\[
A^{(i)}(G,x) = (|V|-i) D^{(i)}(G,x)-(1+x)D^{(i+1)}(G,x)\,.
\]
All but two terms of this sum cancel by combining adjacent terms:
\begin{eqnarray*}
\sum_{i=0}^{|V|-1}(1+x)^{i}\frac{\left(|V|-(i+1)\right)!}{|V|!}&A^{(i)}(G,x)&
= \frac{|V| D(G,x) - (1+x) D^{(1)} (G,x)}{|V|} \\
&+& (1+x) \frac{ (|V|-1) D^{(1)}(G,x) - (1+x) D^{(2)} (G,x) }{|V|(|V|-1)}  \\
&+& \ldots \\
&+& (1+x)^{|V|-1} \frac{ D^{(|V|-1)}(G,x) - (1+x) D^{|V|} (G,x) }{|V|!}  \\
&=& D(G,x) - (1+x)^{|V|}\,.
\end{eqnarray*}
We use that 
$D(G,x)$ is a monic polynomial of degree $|V|$ giving $D^{|V|} (G,x) =|V|!$
\end{proof}

\section{Conclusion}

The domination polynomial resembles such graph polynomials as the independence polynomial $I(G,x)$ and the vertex cover polynomial $VC(G,x)$
insofar as they are all defined as generating functions of certain subsets of vertices. 
However, we showed that the domination polynomial has quite a different behavior with respect to recurrence relations.
Theorems \ref{th:non-exist_v} and \ref{th:non-exist_e} show that $D(G,x)$ cannot satisfy a linear recurrence relation 
analogous to $I(G,x)$, $VC(G,x)$ and other prominent graph polynomials.

We gave many recurrence relations and splitting formulas for the domination polynomial.
Theorem \ref{t:red} gives a reduction formula for $D(G,x)$ based on the related $\p{u}(G,x)$ polynomial.
This theorem gives rise to recurrence steps in various special cases, as well as to 
a linear recurrence relation for arbitrary graphs which uses compositions of standard edge and vertex elimination operations, Theorem \ref{th:arbitrary_rec}. 
We gave splitting formulas for $D(G,x)$ in the case that it is $1$-connected. We then generalized
this in Theorem~\ref{thm:symmDomSplittingFormula} to a splitting formula which allows 
to compute the domination polynomial of a graph $G$, by
separating the graph into two parts, so that we have to compute
domination polynomials of the modifications of the resulting subgraphs
$G^1$ and $G^2$.
Finally, we gave a rather simple recurrence relation for 
$D(G,x)$ using derivatives of domination polynomials of smaller graphs
in Theorem \ref{th:der}.

The domination polynomial is therefore established to have surprisingly diverse and unique decomposition formulas
and warrants further research. 
The paper leaves some open problems, among them:

\begin{openproblem}
Are there simple graph operations which can be used to give
a simpler recurrence relation for the domination polynomial of arbitrary graphs?
\end{openproblem}

In Theorem~\ref{thm:symmDomSplittingFormula} it is 
necessary to attach additional edges to the
subgraphs $G^1$ and $G^2$ in order to state this formula. 
\begin{openproblem}
Is it possible to give a splitting formula in the vein of Theorem \ref{thm:symmDomSplittingFormula} 
which avoids adding edges?
\end{openproblem}

Another interesting observation is that some edges might be shifted to
different positions in a graph, without changing the resulting
domination polynomial. 
\begin{openproblem}
Is there a criterion which characterizes the edges which 
can be shifted without changing $D(G,x)$?
\end{openproblem}



 \nocite{Alikhani2009}
 \nocite{Arocha2000}
 \nocite{Akbari2010}
 \nocite{Frucht1970}
 \nocite{Dohmen2012}
 \nocite{ar:Alikhani2011ARX}
 \nocite{ar:Alikhani2011} 
 \nocite{ar:AlikhaniPeng2011}
 \nocite{ar:AkbariAlikhaniOboudiPeng2010}
 \nocite{ar:AlikhaniPengIntro}

\newpage
\begin{sidewaysfigure}
\centerline{ Appendix A: $\Dmat_X$ with $X=\{u,v\}$}\label{app:dmatx}
{\scriptsize
\begin{align*}
  \bordermatrix{ &&&&&&&&&
    \cr \vspace{5pt}
    \scriptstyle{((\varnothing,\{u\}),(\varnothing,\{v\}))} 
    & \nm{\pa\pa} & \nm{\pb\pa} & \nm{\pc\pa} & \nm{\pa\pb} & \nm{\pb\pb} & \nm{\pc\pb} & \nm{\pa\pc} & \nm{\pb\pc} & \nm{\pc\pc} \cr \vspace{5pt}
    \scriptstyle{((\varnothing, \varnothing),(\varnothing,\{v\}))} 
    & \nm{\pb\pa} & \nm{\pc\pa} & \nm{\pb\pa} & \nm{\pb\pb} & \nm{\pc\pb} & \nm{\pd\pb} & \nm{\pb\pc} & \nm{\pc\pc} & \nm{\pd\pc} \cr \vspace{5pt}
    \scriptstyle{((\{u\},\{u\}),(\varnothing,\{v\}))} 
    & \nm{\pc\pa} & \nm{\pb\pa} & \nm{\pe\pa} & \nm{\pc\pb} & \nm{\pd\pb} & \nm{\pe\pb} & \nm{\pc\pc} & \nm{\pd\pc} & \nm{\pe\pc} \cr \vspace{5pt}
    \scriptstyle{((\varnothing,\{u\}),(\varnothing,\varnothing))} 
    & \nm{\pa\pb} & \nm{\pb\pb} & \nm{\pc\pb} & \nm{\pa\pc} & \nm{\pb\pc} & \nm{\pc\pc} & \nm{\pa\pd} & \nm{\pb\pd} & \nm{\pc\pd} \cr \vspace{5pt}
    \scriptstyle{((\varnothing, \varnothing),(\varnothing,\varnothing))} 
    & \nm{\pb\pb} & \nm{\pc\pb} & \nm{\pd\pb} & \nm{\pb\pc} & \nm{\pc\pc} & \nm{\pd\pc} & \nm{\pb\pd} & \nm{\pc\pd} & \nm{\pd\pd} \cr \vspace{5pt}
    \scriptstyle{((\{u\},\{u\}),(\varnothing,\varnothing))} 
    & \nm{\pc\pb} & \nm{\pd\pb} & \nm{\pe\pb} & \nm{\pc\pc} & \nm{\pd\pc} & \nm{\pe\pc} & \nm{\pc\pd} & \nm{\pd\pd} & \nm{\pe\pd} \cr \vspace{5pt}
    \scriptstyle{((\varnothing,\{u\}),(\{v\},\{v\}))} 
    & \nm{\pa\pc} & \nm{\pb\pc} & \nm{\pc\pc} & \nm{\pa\pd} & \nm{\pb\pd} & \nm{\pc\pd} & \nm{\pa\pe} & \nm{\pb\pe} & \nm{\pc\pe} \cr \vspace{5pt}
    \scriptstyle{((\varnothing,\varnothing),(\{v\},\{v\}))} 
    & \nm{\pb\pc} & \nm{\pc\pc} & \nm{\pd\pc} & \nm{\pb\pd} & \nm{\pc\pd} & \nm{\pd\pd} & \nm{\pb\pe} & \nm{\pc\pe} & \nm{\pd\pe} \cr \vspace{5pt}
    \scriptstyle{((\{u\},\{u\}),(\{v\},\{v\}))}  
    & \nm{\pc\pc} & \nm{\pd\pc} & \nm{\pe\pc} & \nm{\pc\pd} & \nm{\pd\pd} & \nm{\pe\pd} & \nm{\pc\pe} & \nm{\pd\pe} & \nm{\pe\pe} \cr 
  }
\end{align*}
}
%
{\small
\begin{align*}
  \bordermatrix{
    & 
    \scriptstyle{(\varnothing, \{u,v\})} & 
    \scriptstyle{(\varnothing, \{v\})} & 
    \scriptstyle{(\{u\},\{u,v\})} &
    \scriptstyle{(\varnothing, \{u\})} & 
    \scriptstyle{(\varnothing,\varnothing)} & 
    \scriptstyle{(\{u\},\{u\})} & 
    \scriptstyle{(\{v\},\{u, v\})} & 
    \scriptstyle{(\{v\},\{v\})} & 
    \scriptstyle{(\{u,v\},\{u,v\})}  \cr
    \scriptstyle{(\varnothing,\{u,v\})} & \pa\pa & \pb\pa & \pc\pa & \pa\pb & \pb\pb & \pc\pb & \pa\pc & \pb\pc & \pc\pc \cr
    \scriptstyle{(\varnothing, \{v\})}      & \pb\pa & \pc\pa & \pd\pa & \pb\pb & \pc\pb & \pd\pb & \pb\pc & \pc\pc & \pd\pc \cr
    \scriptstyle{(\{u\},\{u,v\})}             & \pc\pa & \pd\pa & \pe\pa & \pc\pb & \pd\pb & \pe\pb & \pc\pc & \pd\pc & \pe\pc \cr
    \scriptstyle{(\varnothing,\{u\})}       & \pa\pb & \pb\pb & \pc\pb & \pa\pc & \pb\pc & \pc\pc & \pa\pd & \pb\pd & \pc\pd \cr
    \scriptstyle{(\varnothing, \varnothing)}    & \pb\pb & \pc\pb & \pd\pb & \pb\pc & \pc\pc & \pd\pc & \pb\pd & \pc\pd & \pd\pd \cr
    \scriptstyle{(\{u\},\{u\})}           & \pc\pb & \pd\pb & \pe\pb & \pc\pc & \pd\pc & \pe\pc & \pc\pd & \pd\pd & \pe\pd \cr
    \scriptstyle{(\{v\},\{u,v\})}             & \pa\pc & \pb\pc & \pc\pc & \pa\pd & \pb\pd & \pc\pd & \pa\pe & \pb\pe & \pc\pe  \cr 
    \scriptstyle{(\{v\},\{v\})}           & \pb\pc & \pc\pc & \pd\pc & \pb\pd & \pc\pd & \pd\pd & \pb\pe & \pc\pe & \pd\pe \cr
    \scriptstyle{(\{u,v\},\{u,v\})}         & \pc\pc & \pd\pc & \pe\pc & \pc\pd & \pd\pd & \pe\pd & \pc\pe & \pd\pe & \pe\pe \cr
  }
\end{align*}
}
\end{sidewaysfigure}

\newpage
\begin{sidewaysfigure}
\centerline{Appendix B: $x^2(1+x)^2 \Dmat_X^{-1}$ with $X=\{u,v\}$}\label{app:dmatx3}
{\tiny
{\small
\begin{align*}
\left(\begin{array}{rrrrrrrrr}
\scriptstyle{\left(x + 3\right)}^{2} x^{4} &\scriptstyle {\left(x + 1\right)} {\left(x +
3\right)} x^{3} &\scriptstyle -2 \, {\left(x + 3\right)} x^{3} &\scriptstyle {\left(x +
1\right)} {\left(x + 3\right)} x^{3} &\scriptstyle {\left(x + 1\right)}^{2}
x^{2} &\scriptstyle -2 \, {\left(x + 1\right)} x^{2} &\scriptstyle -2 \, {\left(x +
3\right)} x^{3} &\scriptstyle -2 \, {\left(x + 1\right)} x^{2} &\scriptstyle 4 \, x^{2}
\\
\scriptstyle{\left(x + 1\right)} {\left(x + 3\right)} x^{3} &\scriptstyle -{\left(x +
1\right)}^{2} {\left(x + 3\right)} x^{3} &\scriptstyle {\left(x + 1\right)}
{\left(x + 3\right)} x^{3} &\scriptstyle {\left(x + 1\right)}^{2} x^{2} &\scriptstyle
-{\left(x + 1\right)}^{3} x^{2} &\scriptstyle {\left(x + 1\right)}^{2} x^{2}
&\scriptstyle -2 \, {\left(x + 1\right)} x^{2} &\scriptstyle 2 \, {\left(x +
1\right)}^{2} x^{2} &\scriptstyle -2 \, {\left(x + 1\right)} x^{2} \\
\scriptstyle-2 \, {\left(x + 3\right)} x^{3} &\scriptstyle {\left(x + 1\right)} {\left(x +
3\right)} x^{3} &\scriptstyle -{\left(x - 1\right)} {\left(x + 3\right)} x^{2}
&\scriptstyle -2 \, {\left(x + 1\right)} x^{2} &\scriptstyle {\left(x + 1\right)}^{2}
x^{2} &\scriptstyle -{\left(x - 1\right)} {\left(x + 1\right)} x &\scriptstyle 4 \,
x^{2} &\scriptstyle -2 \, {\left(x + 1\right)} x^{2} &\scriptstyle 2 \, {\left(x -
1\right)} x \\
\scriptstyle{\left(x + 1\right)} {\left(x + 3\right)} x^{3} &\scriptstyle {\left(x +
1\right)}^{2} x^{2} &\scriptstyle -2 \, {\left(x + 1\right)} x^{2} &\scriptstyle
-{\left(x + 1\right)}^{2} {\left(x + 3\right)} x^{3} &\scriptstyle -{\left(x +
1\right)}^{3} x^{2} &\scriptstyle 2 \, {\left(x + 1\right)}^{2} x^{2} &\scriptstyle
{\left(x + 1\right)} {\left(x + 3\right)} x^{3} &\scriptstyle {\left(x +
1\right)}^{2} x^{2} &\scriptstyle -2 \, {\left(x + 1\right)} x^{2} \\
\scriptstyle{\left(x + 1\right)}^{2} x^{2} &\scriptstyle -{\left(x + 1\right)}^{3} x^{2}
&\scriptstyle {\left(x + 1\right)}^{2} x^{2} &\scriptstyle -{\left(x + 1\right)}^{3}
x^{2} &\scriptstyle {\left(x + 1\right)}^{4} x^{2} &\scriptstyle -{\left(x +
1\right)}^{3} x^{2} &\scriptstyle {\left(x + 1\right)}^{2} x^{2} &\scriptstyle -{\left(x
+ 1\right)}^{3} x^{2} &\scriptstyle {\left(x + 1\right)}^{2} x^{2} \\
\scriptstyle-2 \, {\left(x + 1\right)} x^{2} &\scriptstyle {\left(x + 1\right)}^{2} x^{2}
&\scriptstyle -{\left(x - 1\right)} {\left(x + 1\right)} x &\scriptstyle 2 \, {\left(x +
1\right)}^{2} x^{2} &\scriptstyle -{\left(x + 1\right)}^{3} x^{2} &\scriptstyle {\left(x
- 1\right)} {\left(x + 1\right)}^{2} x &\scriptstyle -2 \, {\left(x + 1\right)}
x^{2} &\scriptstyle {\left(x + 1\right)}^{2} x^{2} &\scriptstyle -{\left(x - 1\right)}
{\left(x + 1\right)} x \\
\scriptstyle{ -2 \, {\left(x + 3\right)} x^{3}} &\scriptstyle -2 \, {\left(x + 1\right)} x^{2}
&\scriptstyle 4 \, x^{2} &\scriptstyle {\left(x + 1\right)} {\left(x + 3\right)} x^{3}
&\scriptstyle {\left(x + 1\right)}^{2} x^{2} &\scriptstyle -2 \, {\left(x + 1\right)}
x^{2} &\scriptstyle -{\left(x - 1\right)} {\left(x + 3\right)} x^{2} &\scriptstyle
-{\left(x - 1\right)} {\left(x + 1\right)} x &\scriptstyle 2 \, {\left(x -
1\right)} x \\
\scriptstyle-2 \, {\left(x + 1\right)} x^{2} &\scriptstyle 2 \, {\left(x + 1\right)}^{2}
x^{2} &\scriptstyle -2 \, {\left(x + 1\right)} x^{2} &\scriptstyle {\left(x +
1\right)}^{2} x^{2} &\scriptstyle -{\left(x + 1\right)}^{3} x^{2} &\scriptstyle {\left(x
+ 1\right)}^{2} x^{2} &\scriptstyle -{\left(x - 1\right)} {\left(x + 1\right)} x
&\scriptstyle {\left(x - 1\right)} {\left(x + 1\right)}^{2} x &\scriptstyle -{\left(x -
1\right)} {\left(x + 1\right)} x \\
\scriptstyle4 \, x^{2} &\scriptstyle -2 \, {\left(x + 1\right)} x^{2} &\scriptstyle 2 \, {\left(x -
1\right)} x &\scriptstyle -2 \, {\left(x + 1\right)} x^{2} &\scriptstyle {\left(x +
1\right)}^{2} x^{2} &\scriptstyle -{\left(x - 1\right)} {\left(x + 1\right)} x
&\scriptstyle 2 \, {\left(x - 1\right)} x &\scriptstyle -{\left(x - 1\right)} {\left(x +
1\right)} x &\scriptstyle {\left(x - 1\right)}^{2}
\end{array}\right)
\end{align*}}
}
\end{sidewaysfigure}

\end{document}